\documentclass[a4paper,10pt]{amsart}
\usepackage{enumitem}   
\usepackage{cite}
\usepackage{colonequals}
\usepackage{tikz-cd}
\usetikzlibrary{er,positioning}
\usepackage{amsmath,amssymb,amsthm,graphicx,yfonts,mathrsfs,color}
\usepackage{amscd}  
\usepackage{stmaryrd}
\usepackage[all]{xy} 
\usepackage{multirow} 
\usepackage{array}
\usepackage[utf8]{inputenc}
\usepackage{braket}
\usepackage{rotating}
\usepackage{setspace}
\usepackage{multirow}
\usepackage{rotating}
\usepackage{pifont}
\usepackage{comment}
\PassOptionsToPackage{hyphens}{url}
\usepackage[colorlinks, backref, breaklinks]{hyperref}
\hypersetup{%
  colorlinks = true,
  linkcolor  = black
}

\newcommand*{\isoarrow}[1]{\arrow[#1,"\rotatebox{90}{\(\sim\)}"
]}

\usepackage{thmtools}
\usepackage{thm-restate}
\theoremstyle{plain}
\newtheorem{thm}{Theorem}[section]
\newtheorem{lemma}[thm]{Lemma}

\newtheorem{prop}[thm]{Proposition}
\newtheorem{cor}[thm]{Corollary}
\newtheorem{cjc}[thm]{Conjecture}
\newtheorem{claim}{Claim}
\newtheorem{claim*}{Claim}

\theoremstyle{definition}

\newtheorem{rmk}[thm]{Remark}
\newtheorem{quest}[thm]{Question}
\newtheorem{example}[thm]{Example}

\newtheorem{mydef}[thm]{Definition}

\makeatletter
\def\th@plain{%
  \thm@notefont{}
  \itshape 
}
\def\th@definition{%
  \thm@notefont{}
  \normalfont 
}
\makeatother
 
\newcommand{\ord}{\mathop{\mathrm{ord}_p}\nolimits}
\newcommand{\ordnop}{\mathop{\mathrm{ord}}\nolimits}
\newcommand{\Log}{\mathop{\mathrm{Log}}\nolimits}

\providecommand{\tr}{\mathop{\rm tr}\nolimits}

\newcommand{\Q}{\mathbb{Q}}

\newcommand{\Z}{\mathbb{Z}}
\newcommand{\F}{\mathbb{F}}
\newcommand{\OO}{\mathcal{O}} 
\newcommand{\fq}{\mathfrak{q}}
\newcommand{\fp}{\mathfrak{p}}

\DeclareMathOperator{\tors}{tors}

\DeclareFontFamily{U}{wncy}{}
\DeclareFontShape{U}{wncy}{m}{n}{<->wncyr10}{}
\DeclareSymbolFont{mcy}{U}{wncy}{m}{n}
\DeclareMathSymbol{\sh}{\mathord}{mcy}{"58} 

\author{Francesca Bianchi}

\address{Bernoulli Institute for Mathematics, 
Computer Science and Artificial Intelligence\\ University of Groningen, Groningen, The Netherlands}
\email{francesca.bianchi@rug.nl}

\title{Quadratic Chabauty for (bi)elliptic curves and Kim's conjecture}
\date{\today}
\begin{document}

\begin{abstract}
We explore a number of problems related to the quadratic Chabauty method for determining integral points on hyperbolic curves.
We remove the assumption of semistability in the description of the quadratic Chabauty sets $\mathcal{X}(\mathbb{Z}_p)_2$ containing the integral points $\mathcal{X}(\mathbb{Z})$ of an elliptic curve of rank at most $1$. Motivated by a conjecture of Kim, we then investigate theoretically and computationally the set-theoretic difference $\mathcal{X}(\mathbb{Z}_p)_2\setminus \mathcal{X}(\mathbb{Z})$. We also consider some algorithmic questions arising from Balakrishnan--Dogra's explicit quadratic Chabauty for the rational points of a genus-two bielliptic curve. As an example, we provide a new solution to a problem of Diophantus which was first solved by Wetherell.\\
Computationally, the main difference from the previous approach to quadratic Chabauty is the use of the $p$-adic sigma function in place of a double Coleman integral.
\end{abstract}

\subjclass[2010]{11D45, 11G50, 14H52, 11Y50}
\keywords{quadratic Chabauty, $p$-adic heights, integral points on hyperbolic curves}

\maketitle
\section{Introduction}
\label{intro}
Let $(E,O)$ be an elliptic curve over $\mathbb{Q}$ and fix an odd prime $p$ of good reduction. Denote by $\mathcal{E}$ the minimal regular model of $E$ and by $\mathcal{X}$ the complement of the origin in $\mathcal{E}$.

When $E$ has Mordell--Weil rank $1$ and the Tamagawa number of $E/\mathbb{Q}$ is trivial at all primes, Kim \cite{KimMasseyProducts} and Balakrishnan--Kedlaya--Kim \cite{AppendixToMasseyProduct} described an explicit locally analytic function on $\mathcal{X}(\mathbb{Z}_p)$ which vanishes on the set $\mathcal{X}(\mathbb{Z})$ of global integral points.

Subsequently, Balakrishnan--Dan-Cohen--Kim--Wewers\cite{nonabelianconjecture} generalised the result to arbitrary semistable elliptic curves of rank $1$ and gave a similar $p$-adic characterisation of $\mathcal{X}(\mathbb{Z})$ when $E$ is semistable and has rank $0$.

The discussion fits into Kim's non-abelian Chabauty programme as introduced in \cite{KimP1} and \cite{Kimunipotent}. In particular, Kim constructed a sequence of subsets of $p$-adic points
\begin{equation*}
\mathcal{X}(\mathbb{Z}_p)\supset\mathcal{X}(\mathbb{Z}_p)_1\supset\mathcal{X}(\mathbb{Z}_p)_2\supset\dots \supset \mathcal{X}(\mathbb{Z}).
\end{equation*}
The $p$-adic locally analytic functions from \cite{nonabelianconjecture} are essentially those that define $\mathcal{X}(\mathbb{Z}_p)_2$, the set of \emph{cohomologically global points of level $2$}, in the larger $\mathcal{X}(\mathbb{Z}_p)$.

The subscript $n$ in $\mathcal{X}(\Z_p)_n$ indicates a particular quotient $U_n$ of the unipotent $p$-adic \'etale fundamental group $U$ of $\mathcal{X}_{\overline{\mathbb{Q}}}$ (at a tangential base point). The set $\mathcal{X}(\mathbb{Z}_p)_n$ is then defined in terms of certain ``unipotent Kummer maps'' from $\mathcal{X}(\Z)$ and $\mathcal{X}(\mathbb{Z}_q)$, at every prime $q$, to global and local cohomology sets with $U_n$-coefficients, respectively, in a way that generalises to objects with non-abelian \'etale fundamental group the role played by $\Q_p$-Selmer groups in our understanding of rational points on abelian varieties.

Despite its abstract cohomological definition, the set $\mathcal{X}(\Z_p)_n$ is believed to be computable in practice (\hspace{1sp}{\cite{nonabelianconjecture}}) as a union of intersections of zero loci of locally analytic functions defined in terms of iterated $p$-adic integrals. Unfortunately, such a characterisation is yet to be provided for $n\geq 3$.

Nevertheless, the explicit description of $\mathcal{X}(\mathbb{Z}_p)_2$ in the rank $0$ semistable case given in \cite{nonabelianconjecture} was already sufficient to collect some computational evidence for the following special case of a conjecture of Kim (see \cite[\S3.1]{nonabelianconjecture}).

\begin{cjc}[Kim, 2012]
\label{cjc:Kim}
For sufficiently large $n$, we have
\begin{equation*}
\mathcal{X}(\mathbb{Z}_p)_n=\mathcal{X}(\mathbb{Z}).
\end{equation*}
\end{cjc}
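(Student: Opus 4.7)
The plan is to exploit the cohomological definition of $\mathcal{X}(\mathbb{Z}_p)_n$ and reduce the conjecture to a dimension comparison between the global Selmer scheme attached to $U_n$ and the corresponding local cohomology. By construction, $\mathcal{X}(\mathbb{Z}_p)_n$ is the preimage under the local unipotent Kummer map $\kappa_p^{(n)}\colon \mathcal{X}(\mathbb{Z}_p)\to H^1_f(G_{\mathbb{Q}_p},U_n)$ of the image $\mathrm{loc}_p(\mathrm{Sel}_n)$, where $\mathrm{Sel}_n\subset H^1_f(G_T,U_n)$ is the global Selmer scheme cut out by the local conditions at primes in $T$. Since $\mathcal{X}(\mathbb{Z})\subset\mathcal{X}(\mathbb{Z}_p)_n$ holds automatically, Conjecture \ref{cjc:Kim} is equivalent to the assertion that $\mathrm{loc}_p(\mathrm{Sel}_n)\cap \mathrm{Image}(\kappa_p^{(n)}) = \kappa_p^{(n)}(\mathcal{X}(\mathbb{Z}))$ for $n$ sufficiently large.

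First I would bound $\dim\mathrm{Sel}_n$ and $\dim H^1_f(G_{\mathbb{Q}_p},U_n)$ as schemes over $\mathbb{Q}_p$, by filtering $U_n$ by its descending central series and running the associated long exact sequences in Galois cohomology. The graded pieces of $U_n$ admit a description in terms of symmetric powers of $H^1_{\mathrm{\acute{e}t}}(\mathcal{X}_{\overline{\mathbb{Q}}})$ (with Tate twists), which reduces the estimate to a control of Bloch--Kato Selmer groups of these motives. The key dimension inequality to aim for is the strict gap $\dim\mathrm{Sel}_n < \dim H^1_f(G_{\mathbb{Q}_p},U_n)$ for $n\gg 0$. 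Such a gap would force $\mathrm{loc}_p(\mathrm{Sel}_n)$ to lie in a proper analytic subvariety of the image of $\kappa_p^{(n)}$, and hence cut out $\mathcal{X}(\mathbb{Z}_p)_n$ inside $\mathcal{X}(\mathbb{Z}_p)$ by nontrivial $p$-adic analytic equations beyond those used for $\mathcal{X}(\mathbb{Z}_p)_{n-1}$.

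Combining the dimension gap with the finiteness of $\mathcal{X}(\mathbb{Z})$ (Siegel in the elliptic setting, Faltings in higher genus) and the Noetherian behaviour of nested analytic subsets of the compact rigid space $\mathcal{X}(\mathbb{Z}_p)$, the decreasing sequence $\{\mathcal{X}(\mathbb{Z}_p)_n\}$ should stabilise at a finite level $N$ to a finite subset of $\mathcal{X}(\mathbb{Z}_p)$ containing $\mathcal{X}(\mathbb{Z})$. Residual ``fake'' solutions could then be removed either by varying the auxiliary prime $p$ and intersecting the resulting Kim sets (in the spirit of the Mordell--Weil sieve used alongside classical Chabauty), or by exploiting additional constraints from a still deeper level $U_{N+1}$.

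The principal obstacle is the middle step: obtaining the required bound on $\dim\mathrm{Sel}_n$ amounts essentially to proving strong forms of the Bloch--Kato and Fontaine--Mazur conjectures for the graded pieces of $U_n$, predicting that the Galois cohomology classes on the global side come from motivic origin. Outside a narrow range of low weights and Tate twists (where results of Soul\'e and of Bloch--Kato apply) these conjectures are wide open, and without them one can only accumulate conditional and experimental evidence for Conjecture \ref{cjc:Kim}, in the style of \cite{nonabelianconjecture} and of the computations in the present paper.
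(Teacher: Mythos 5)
This statement is a \emph{conjecture} (due to Kim), not a theorem of the paper: the paper offers no proof of it, and indeed much of the paper is devoted to probing it empirically at level $n=2$, where Theorems \ref{thm:nonequality} and \ref{thm:nonequalitystrong} exhibit curves with $\mathcal{X}(\mathbb{Z})\subsetneq\mathcal{X}(\mathbb{Z}_p)_2$ --- in one case (8712.u5) for \emph{every} good prime $p$. So no proof attempt can be ``checked against the paper's proof''; what can be assessed is whether your sketch could plausibly become one, and it cannot in its present form.

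The central gap is that your argument, even granting everything you assume, does not prove the conjectured \emph{equality}. The dimension inequality $\dim\mathrm{Sel}_n<\dim H^1_f(G_{\mathbb{Q}_p},U_n)$ (which, as you concede, already requires open cases of Bloch--Kato/Fontaine--Mazur) yields only that $\mathcal{X}(\mathbb{Z}_p)_n$ is cut out by nontrivial analytic equations, hence is finite for $n\gg 0$; this is exactly the heuristic recalled in \S 1 of the paper via \cite[\S 1.8]{nonabelianconjecture}. A decreasing chain of finite sets stabilises, but only to \emph{some} finite set containing $\mathcal{X}(\mathbb{Z})$, and nothing in your argument forces the stable value to equal $\mathcal{X}(\mathbb{Z})$: the paper's Theorems \ref{thm:extrapointsintro} and \ref{level2rank0} show concretely how algebraic torsion points over number fields can satisfy all the level-$2$ equations because their local heights are indistinguishable from those of rational points, and there is no a priori reason such points die at higher levels. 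Your two proposed remedies do not close this gap within the conjecture's own terms: intersecting over varying $p$ changes the statement (the conjecture fixes $p$, and Theorem \ref{thm:nonequalitystrong} shows that at level $2$ varying $p$ does not always help), while ``exploiting level $U_{N+1}$'' after the chain has stabilised is precisely the assertion to be proved, not an argument for it. In short, the proposal reproduces the known conditional finiteness philosophy but supplies no mechanism for sharpness, which is the actual content of the conjecture.
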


Indeed, the authors of \emph{loc.\ cit.}\ verified the equality $$\mathcal{X}(\mathbb{Z}_p)_2=\mathcal{X}(\mathbb{Z})$$ for the prime $p=5$ and for all the $256$ semistable elliptic curves of rank $0$ for which they computed $\mathcal{X}(\mathbb{Z}_p)_2$. An additional test that was performed by the same authors was that of fixing $\mathcal{X}$ and varying the prime $p$: once again, no point in $\mathcal{X}(\Z_p)_2\setminus\mathcal{X}(\Z)$ was found. No other study of the difference $\mathcal{X}(\Z_p)_2\setminus \mathcal{X}(\Z)$ appears in the literature, hence motivating the following two questions:
\begin{quest}
\label{quest:1}
Does there exist any elliptic curve of rank $0$ for which $\mathcal{X}(\mathbb{Z}_p)_2$ contains at least one point which is not in $\mathcal{X}(\Z)$?
\end{quest}

\begin{quest}
\label{quest:2}
What geometric or algebraic properties should a point in $\mathcal{X}(\mathbb{Z}_p)_2\setminus \mathcal{X}(\Z)$ satisfy?
\end{quest}

One goal of the present paper is to give answers to these questions, with the idea that elliptic curves should serve as a test case for a conjecture that is in fact formulated by Kim in much greater generality than how we stated it here, and that as such would have striking applications if it were to hold. Indeed, $\mathcal{X}$ could be replaced by a suitable $\Z$-model $\mathcal{C}$ of any hyperbolic curve over $\Q$ with good reduction at $p$. In particular, the conjecture would give an effective approach towards finding the set of rational points on a curve of genus $g\geq 2$.

 In the elliptic curve case, the conjecture might not have direct Diophantine interest, in the sense that there already exist algorithms for the computation of integral points on elliptic curves \cite{smart, Pethoetal, stroeker}, and the rank $0$ and $1$ instances which we will explore are particularly well understood. However, the known explicit versions of non-abelian Chabauty for curves of higher genus (cf.\ \cite{BBM0,BDQCI,SplitCartan}) all generalise the explicit description of $\mathcal{X}(\Z_p)_2$ for elliptic curves of rank at most $1$. Therefore, a conceptual understanding of the zero sets of the $p$-adic equations defining $\mathcal{X}(\Z_p)_2$ is essential to hope to achieve something similar in more complicated settings.

In general, even finiteness of $\mathcal{C}(\Z_p)_n$ for $n$ large enough is only conjectural (but see \cite{Kim:CMelliptic, CoatesKim, EllenbergHast, BDQCI} for results in this direction, and \cite{Kimunipotent} for a proof assuming the Bloch--Kato conjecture). If $g\geq 2$ and, for a given $n$, $\mathcal{C}(\Z_p)_n$ is finite and explicitly computable to arbitrary $p$-adic precision, then the Mordell--Weil sieve could be used to try to provably extrapolate $\mathcal{C}(\Z)$ from $\mathcal{C}(\Z_p)_n$. However, the Mordell--Weil sieve is not guaranteed to terminate.  
Thus, finiteness of $\mathcal{C}(\Z_p)_n$ would not be sufficient to imply an effective version of Faltings's Theorem. 

Suppose now that $\mathcal{C}(\Z_p)_n$ is finite for $n$ sufficiently large. One reason for expecting that the inclusion $\mathcal{C}(\mathbb{Z})\subset\mathcal{C}(\mathbb{Z}_p)_n$ should eventually become sharp is explained in \cite[\S 1.8]{nonabelianconjecture}: assuming some well known motivic conjectures, the number of algebraically independent locally analytic functions vanishing on $\mathcal{C}(\Z_p)_n$ is strictly increasing in $n$ (for $n\gg 0$). See also \cite[\S\S 1, 3.4]{nonabelianconjecture} for the philosophy behind Conjecture \ref{cjc:Kim} (in its general form) and for its relationship with the conjectural finiteness of the Tate--Shafarevich group and with Grothendieck's section conjecture. 

For an elliptic curve of rank $1$, finiteness of $\mathcal{X}(\Z_p)_n$ can only hold at level $n\geq 2$.  On the other hand, for a rank $0$ elliptic curve, $\mathcal{X}(\Z_p)_1$ is finite and there are two independent equations defining $\mathcal{X}(\Z_p)_2$ (see Theorem \ref{level2rank0} below), hence justifying why Question \ref{quest:1} had proved itself arduous.  We show, however, that the answer to the question is negative. More precisely, we prove the following two theorems (see also Theorem \ref{thm:variationp}).
\begin{thm}
\label{thm:nonequality}
There exist infinitely many rank $0$ elliptic curves for which $$\mathcal{X}(\mathbb{Z})\subsetneq \mathcal{X}(\mathbb{Z}_p)_2$$ for infinitely many good primes $p$. 
\end{thm}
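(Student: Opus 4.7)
My plan is to exploit the explicit description of $\mathcal{X}(\Z_p)_2$ for rank $0$ elliptic curves provided by Theorem~\ref{level2rank0} below, which generalises the semistable case of \cite{nonabelianconjecture} and presents $\mathcal{X}(\Z_p)_2$ as a finite union, indexed by \emph{admissible} local data $\mathbf{t}=(t_\ell)_\ell$ at the bad primes of $E$, of zero loci $Z_{\mathbf{t}}\subset\mathcal{X}(\Z_p)$ of locally analytic functions $F_{\mathbf{t}}$ built from the $p$-adic sigma function $\sigma_p$ and from constants encoding local heights at each bad $\ell$. Since reductions of integral points populate only some of the admissible data, the natural strategy is to look for a ``ghost'' datum $\mathbf{t}$ not realised by any element of $\mathcal{X}(\Z)$ and to show that the associated equation $F_{\mathbf{t}}=0$ nonetheless has a $\Z_p$-solution, which will then lie in $\mathcal{X}(\Z_p)_2\setminus\mathcal{X}(\Z)$.

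First I would exhibit an infinite family of rank $0$ elliptic curves admitting such a ghost datum. A natural class is curves with multiplicative reduction of Kodaira type $I_n$ at some prime $\ell$ with $n$ large, so that the component group $\Phi_\ell=\Z/n\Z$ offers more admissible choices of $t_\ell$ than can be realised by the Mazur-bounded torsion of $E(\Q)$ (which, in rank $0$, is all of $\mathcal{X}(\Z)$ by Lutz--Nagell). An infinite subfamily of rank $0$ curves can then be produced by quadratic twisting of a fixed such curve, retaining rank $0$ on infinitely many twists via classical rank-bound techniques.

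Next, for each $E$ in the family and each ghost datum $\mathbf{t}$, I would show that $F_{\mathbf{t}}=0$ has a zero in $\mathcal{X}(\Z_p)$ for infinitely many good primes $p$. On each residue disk $F_{\mathbf{t}}$ is a $p$-adic power series $a_0+a_1 z+a_2 z^2+\dots$ whose coefficients $a_i$ for $i\geq 1$ come from the Taylor expansion of $\log\sigma_p$ at the disk centre, while $a_0$ is essentially a $\Q$-linear combination of $\log_p\ell$ with coefficients determined by $\mathbf{t}$. By Strassman/Newton polygon analysis, $F_{\mathbf{t}}$ has a zero in the disk as soon as $v_p(a_0)$ exceeds a threshold set by $v_p(a_1)$.

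The hard part will be this last step: controlling $v_p(a_0)$ uniformly as $p$ varies requires delicate input on $p$-adic logarithms of rational numbers, and is likely the technical heart of the proof. A realistic alternative, matching the computational flavour of the paper, would be to first verify the existence of an extra point by direct computation for a single pair $(E_0,p_0)$, and then propagate: across infinitely many primes $p$ via a congruence or density argument applied to the valuation of $a_0$, and across infinitely many curves via quadratic twisting or isogeny, preserving the ghost datum in the process.
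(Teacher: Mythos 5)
Your strategy has a genuine gap at its analytic core. The set $\phi(w)$ in Theorem \ref{level2rank0} is cut out by \emph{two} independent conditions, $\Log(z)=0$ and $2D_2(z)+||w||=0$, not by a single power series $F_{\mathbf{t}}$. The first condition already forces $z$ to be a torsion point of $E(\mathbb{Q}_p)$, so the candidate set is the finite collection of $p$-adic torsion points in $\mathcal{X}(\mathbb{Z}_p)$ (equivalently, localisations of points of $E(\overline{\mathbb{Q}})_{\tors}$); the second condition must then hold \emph{exactly} at one of these finitely many algebraic points. A Strassman/Newton-polygon argument can only produce a zero of one power series in a residue disk, and that zero will in general not be a torsion point; so controlling $v_p(a_0)$ cannot manufacture a point of $\phi(w)$ for a ``ghost'' datum $w$. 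This is precisely why the paper calls the rank-$0$ case arduous: two independent equations must vanish simultaneously. The correct mechanism (Theorem \ref{thm:extrapoints}) is arithmetic, not analytic: an extra point must be the localisation of an algebraic torsion point $P$ over a number field $K$ whose local heights $\lambda_{\mathfrak{q}}(P)$ are independent of the choice of $\mathfrak{q}\mid q$ for every rational $q$, i.e.\ $P$ must be ``height-indistinguishable'' from a rational point. Your ghost-datum idea with large $\mathrm{I}_n$ component groups does not address how such a $P$ would be produced.

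The paper instead exhibits explicit such points. One route: for quadratic twists $E^d$ of $y^2=x^3-27$, the rational $2$-torsion point $(3d,0)$ is integral, and applying the order-$3$ automorphism $(x,y)\mapsto(\zeta x, -y)$ (available since $j=0$) produces Galois-conjugate torsion points whose local heights all agree with those of the rational point (Corollary \ref{cor:j0automorphism}); these localise into $\mathcal{X}(\mathbb{Z}_p)$ for every good ordinary $p$ by Deuring's criterion. A second route uses $4$-torsion points on twists of $X_0(49)$ over a quartic field, combining automorphism invariance with non-cyclotomic idele class characters (Proposition \ref{prop:twists_X049}). In either case, infinitely many rank-$0$ twists are supplied by Waldspurger's nonvanishing theorem plus Kolyvagin, and infinitely many suitable primes by Chebotarev. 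Your closing suggestion to ``verify one example and propagate by twisting'' gestures in this direction, but without identifying torsion points on which Galois acts by automorphisms of $E$ (or an equivalent height-saturation mechanism), the propagation step has nothing to propagate.
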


\begin{thm}
\label{thm:nonequalitystrong}
There exists exactly one rank $0$ elliptic curve of conductor at most $30,000$ for which $$\mathcal{X}(\mathbb{Z})\subsetneq \mathcal{X}(\mathbb{Z}_p)_2$$ for all primes $p$ of good (ordinary and supersingular) reduction. This is the curve \href{http://www.lmfdb.org/EllipticCurve/Q/8712/u/5}{8712.u5} \cite{lmfdb}. 
\end{thm}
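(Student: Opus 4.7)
My plan is a two-pronged computational/theoretical approach built on the explicit description of $\mathcal{X}(\mathbb{Z}_p)_2$ for a rank $0$ elliptic curve (Theorem \ref{level2rank0}), which expresses it as the zero locus of two locally analytic functions written in terms of the $p$-adic sigma function and local heights. The first prong establishes uniqueness: for each of the rank $0$ curves in the LMFDB with conductor at most $30{,}000$ other than 8712.u5, I exhibit a good prime $p$ at which $\mathcal{X}(\mathbb{Z}_p)_2 = \mathcal{X}(\mathbb{Z})$. The second prong establishes the positive assertion for 8712.u5 by showing that $\mathcal{X}(\mathbb{Z}_p)_2\setminus\mathcal{X}(\mathbb{Z})$ is nonempty at every good prime.

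For the uniqueness direction I would loop over the (several thousand) candidate curves: for each one, enumerate the integral points $\mathcal{X}(\mathbb{Z})$ (available from standard tools such as the LMFDB or Sage/Magma), and then iterate over small good primes $p$, computing the finite set $\mathcal{X}(\mathbb{Z}_p)_2$ by solving the two quadratic Chabauty equations on every residue disk up to sufficient $p$-adic precision. As soon as the computed $\mathcal{X}(\mathbb{Z}_p)_2$ agrees with the known $\mathcal{X}(\mathbb{Z})$, the curve is eliminated. By design, a finite search over $p$ below some bound (say $p\leq 50$) is expected to dispose of every curve except 8712.u5; any stubborn curve survives to be treated by the second prong. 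Each such computation relies on a careful use of the $p$-adic sigma function and Coleman integration, not on a naive enumeration, and precision bookkeeping is essential to rigorously certify that no extra $p$-adic zero has been missed.

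The hard part is the statement about 8712.u5, because it quantifies over \emph{all} good primes and so cannot be settled by computation alone. My plan is to first compute $\mathcal{X}(\mathbb{Z}_p)_2$ for 8712.u5 at many good primes and identify the extra points, aiming to recognise a consistent origin; the most natural candidate is a global algebraic point $Q\in\mathcal{X}(\overline{\mathbb{Q}})$ (or a rational non-integral point) whose image under the unipotent Kummer map lies in the appropriate local cohomology at every $p$, so that its $p$-adic reduction always contributes to $\mathcal{X}(\mathbb{Z}_p)_2$. Once such a $Q$ is identified, I would verify theoretically that the two quadratic Chabauty functions vanish on the corresponding local point for every good prime, by analysing the behaviour of the sigma function and local heights at $Q$; this should reduce the ``for all $p$'' statement to a property of $Q$ that is independent of $p$. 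For any residual primes not handled uniformly (e.g.\ very small ones, or primes where the reduction type complicates the formula), a finite supplementary computation would complete the argument.

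The main obstacle is exactly this step: upgrading the finite computational observation at 8712.u5 into a prime-independent statement. This will require pinning down the global object responsible for the persistent extra point and showing that the quadratic Chabauty equations see it universally, rather than by coincidence at each prime. Everything else is a (large but finite) combination of enumeration and $p$-adic analysis of the kind already performed in \cite{nonabelianconjecture}.
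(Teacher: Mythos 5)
Your overall architecture matches the paper's: a finite computation eliminating every other curve (the paper only needs to revisit the $470$ curves from \S\ref{sunsec:largescaledata} where extra points appeared at the first good ordinary prime, and for the CM exceptions in Theorem~\ref{thm:variationp}\thinspace(\ref{32a1}),(\ref{j0extra}) one must pass to supersingular primes, which your search over $p\leq 50$ would reach), plus a prime-independent argument for 8712.u5. The computational prong is fine modulo the usual precision caveats.

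The gap is in your mechanism for the ``for all $p$'' statement. You propose to find \emph{one} global point $Q\in\mathcal{X}(\overline{\Q})$ (or a rational non-integral point) whose localisation lies in $\mathcal{X}(\Z_p)_2$ for every good $p$. A rational non-integral point is excluded by Proposition~\ref{prop:integralequalrational} and the corollary to Theorem~\ref{thm:extrapoints}. And a single point defined over a number field $K\neq\Q$ cannot do the job either: if $p$ has no degree-one prime above it in $K$ (e.g.\ $p$ inert in a quadratic $K$), the point does not even localise to an element of $\mathcal{X}(\Z_p)$, let alone of $\mathcal{X}(\Z_p)_2$. Since $\mathcal{X}(\Z_p)_2\subset E(\overline{\Q})_{\tors}$, any persistent extra point must be algebraic over a nontrivial field, so your candidate mechanism fails for a positive-density set of primes. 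The paper's Example~\ref{Example:mysterious} resolves this with a covering argument: it exhibits torsion points in $\mathcal{X}(\overline{\Z})$ over the \emph{three} quadratic fields $\Q(\sqrt{-11})$, $\Q(\sqrt{33})$, $\Q(\sqrt{-3})$. Because $(-11)\cdot 33\cdot(-3)=33^2$ is a square, every good prime $p\notin S$ splits in at least one of these fields, so at least one of the points localises into $\mathcal{X}(\Z_p)$; membership in $\mathcal{X}(\Z_p)_2$ is then certified by Proposition~\ref{automorphismeffect}\thinspace(\ref{firststatement}) (for the points with $\tau(z)=-z$ over $\Q(\sqrt{-11})$ and $\Q(\sqrt{33})$) and Proposition~\ref{prop:anticyclotomiccyclotomic} (for the $\Q(\sqrt{-3})$ points), together with an explicit check that the local heights at $q\in\{2,3,11\}$ land in the sets $W_q$ after tracking how reduction types and minimal models change in each quadratic extension. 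Without this multi-field covering idea, your plan for the essential step of the theorem does not go through.
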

When we analyse these results in conjunction with Question \ref{quest:2}, it will become apparent that they should not be considered as negative evidence for Conjecture \ref{cjc:Kim}. 
 We will return below to discussing Theorems \ref{thm:nonequality}, \ref{thm:nonequalitystrong} and answers to Question \ref{quest:2} in the context of the methods we develop in order to prove them. For the reader's convenience, let us first digress to write down the equations defining $\mathcal{X}(\Z_p)_2$. 
In fact, the very first goal of this article is to extend the explicit description of $\mathcal{X}(\Z_p)_2$ to an arbitrary elliptic curve of rank $0$ and at the same time correct a slight imprecision in the analogous statement in the semistable case \cite[Theorem 1.12]{nonabelianconjecture} (see Remark \ref{rmk:corrections}).

Before stating the theorem, we introduce some additional notation, which is convenient to maintain similar to \cite{nonabelianconjecture}. Let $\mathcal{E}$ be described by
\begin{equation}
\label{weierstrasseq}
y^2+a_1xy+a_3y = x^3+a_2x^2+a_4x+a_6
\end{equation}
and let $S$ be the set of primes at which $E$ has bad reduction.

For each $q\in S$, define the set $W_q\subset\mathbb{Q}_p$ as follows. If the Tamagawa number at $q$ is $1$, let $$W_q=\{0\};$$ in all other cases, 
\begin{equation*}
W_q=\begin{cases}
W_{q}^{\mathrm{bad}} & \text{if}\ q=2\ \text{and}\ E\ \text{is split multiplicative at}\ q,\\
W_{q}^{\mathrm{bad}} \cup\{0\} & \text{otherwise},
\end{cases}
\end{equation*}
where $W_{q}^{\mathrm{bad}}$ is the finite subset of $\mathbb{Q}_p$ described in \textsc{Table} \ref{table:1} (with $F=\mathbb{Q}$ and $v=(q_v)=(q)$); in particular, $W_{q}^{\mathrm{bad}}$ only depends on the reduction type of $E$ at $q$. 
Let
\begin{equation*}
W = \prod_{q\in S}W_q
\end{equation*} 
and, if $w\in W$, write $||w||=\sum_{q\in S} w_q$. Let $b$ be the integral tangent vector at the origin which is dual to $\omega(O)$, where $$\omega = \frac{dx}{2y+a_1x+a_3}$$
and let $\eta = x\omega$. Furthermore, for $z\in\mathcal{X}(\mathbb{Z}_p)$ write 
\begin{equation*}
\Log(z) = \int_{b}^{z}\omega \quad \text{and} \quad D_2(z) = \int_{b}^z\omega\eta,
\end{equation*}
where the integrals are Coleman integrals.
\begin{thm}
\label{level2rank0}
Suppose that $E$ has rank $0$ and that the $p$-primary part of the Tate--Shafarevich group is finite.
\begin{enumerate}
\item\label{trivial} If, for at least one of $q\in\{2,3\}$, the reduction of $E$ at $q$ is good and $\overline{E}(\mathbb{F}_q)=\{O\}$, or if $E$ has split multiplicative reduction of Kodaira type $\mathrm{I}_1$ at $2$, then 
$$\mathcal{X}(\mathbb{Z}_p)_2=\mathcal{X}(\mathbb{Z})=\emptyset.$$
\item \label{nontrivial}Otherwise, 
\begin{equation*}
\mathcal{X}(\mathbb{Z}_p)_2 = \bigcup_{w\in W} \phi(w),
\end{equation*}
where
\begin{equation*}
\phi(w) = \{z \in \mathcal{X}(\mathbb{Z}_p) : \Log(z)  =0, 2D_2(z) + ||w||=0 \}.
\end{equation*}
\end{enumerate}
\end{thm}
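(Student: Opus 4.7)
The plan is to follow Kim's non-abelian Chabauty formalism as applied to the rank $0$ semistable case in \cite{nonabelianconjecture}, generalising it to allow arbitrary reduction. By definition, $\mathcal{X}(\mathbb{Z}_p)_2=\kappa_p^{-1}(\mathrm{loc}_p(\mathrm{Sel}_{U_2}(\mathcal{X})))$, where $\kappa_p\colon \mathcal{X}(\mathbb{Z}_p)\to H^1(G_p,U_2)$ is the local unipotent Kummer map and $\mathrm{Sel}_{U_2}(\mathcal{X})\subset H^1(G_\mathbb{Q},U_2)$ is cut out by local Selmer conditions at every prime. At level $2$ there is an extension $1\to\mathbb{Q}_p(1)\to U_2\to V_pE\to 1$, so $\kappa_p(z)$ is naturally encoded by the two $p$-adic analytic functions $\Log(z)$ (the $V_pE$-part) and $D_2(z)$ (the $\mathbb{Q}_p(1)$-part computed as an iterated Coleman integral).

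The global Selmer analysis then proceeds in two layers. The abelian quotient of $\mathrm{Sel}_{U_2}(\mathcal{X})$ maps into $H^1_f(G_\mathbb{Q},V_pE)$, which vanishes under the rank $0$ and finiteness of $\sh[p^\infty]$ hypothesis. Localising at $p$, this forces $\Log(z)=0$, the first equation. For the fibre over the trivial abelian class, the remaining degree of freedom lies in $H^1(G_\mathbb{Q},\mathbb{Q}_p(1))$; since $H^1_f(G_\mathbb{Q},\mathbb{Q}_p(1))=0$, such a class is determined by its localisations at primes of bad reduction, and the global matching at $p$ translates through the $p$-adic height pairing, expressed at $p$ via the $p$-adic sigma function and hence via $D_2$, into the second equation $2D_2(z)+||w||=0$, with $w=(w_q)_{q\in S}$ recording the admissible tuple of local contributions.

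The local contribution sets $W_q$ are then computed by a case-by-case analysis of the image of $\mathcal{E}(\mathbb{Z}_q)\setminus\{O\}$ in $H^1(G_q,U_2)$, essentially indexed by the component group of the Néron model. When the Tamagawa number is $1$, the image is trivial and $W_q=\{0\}$; otherwise $W_q^{\mathrm{bad}}$ in Table \ref{table:1} lists the nontrivial local height values, and $\{0\}$ is adjoined except in the split multiplicative case at $q=2$, where the reduction at the origin is excluded by a separate geometric obstruction. Part~(1) follows from the same local analysis together with the reduction map: if $\overline{E}(\mathbb{F}_q)=\{O\}$ at a good prime $q\in\{2,3\}$, every integral point reduces to $O$, so $\mathcal{X}(\mathbb{Z})=\emptyset$, and the corresponding local Selmer condition at $q$ forces $\mathcal{X}(\mathbb{Z}_p)_2=\emptyset$ as well; the split multiplicative $\mathrm{I}_1$ case at $2$ is handled analogously.

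The main obstacle will be removing the semistability assumption. For non-semistable reduction at $q$, one has to compute $H^1(G_q,U_2)$ and the local image $\kappa_q(\mathcal{X}(\mathbb{Z}_q))$ delicately, keeping careful track of the tangential base point $b$ and of the branch of $p$-adic logarithm appearing in the definition of $D_2$. This is exactly the step where the minor imprecision in \cite[Theorem~1.12]{nonabelianconjecture} noted in Remark \ref{rmk:corrections} has to be corrected, so the entries of $W_q^{\mathrm{bad}}$ in Table \ref{table:1} must be re-derived to ensure they match the true local image in all Kodaira types. Replacing the double Coleman integral by (the logarithm of) the $p$-adic sigma function, as emphasised in the abstract, is what makes this matching tractable both in the proof and in practice.
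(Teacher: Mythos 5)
Your proposal is correct and follows essentially the same route as the paper: reduce to the Selmer scheme for $U_2$, use rank $0$ plus finiteness of the $p$-primary part of $\sh$ to kill the $V_pE$-layer (giving $\Log(z)=0$), identify the local Kummer images at $q\in S$ with values of local $p$-adic N\'eron functions computed per Kodaira type, and use global duality for $\mathbb{Q}_p(1)$ to convert the tuple $w$ into the condition $2D_2(z)+||w||=0$, with part (\ref{trivial}) coming from $\mathcal{X}(\mathbb{Z}_q)=\emptyset$ forcing both sets to be empty. The only under-specified step is the ``global matching at $p$'', which in the paper is made precise by the reciprocity law (the image of $H^2(G_T,\mathbb{Q}_p(1))$ in $\bigoplus_{q\in T}\mathbb{Q}_p$ is the kernel of the sum map), but this is exactly the mechanism you are gesturing at.
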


We also remove the assumption of semistable reduction in the rank $1$ case \cite[Proposition 5.12]{nonabelianconjecture}. Assume $E$ has good ordinary\footnote{Although not explicitly stated in \cite{nonabelianconjecture}, their statement also holds only when $p$ is ordinary. However, a similar result holds in the supersingular case: see Remark \ref{rmk:generalisedheights}.} reduction at $p$. Let ${\bf E}_2$ be the Katz $p$-adic weight $2$ Eisenstein series \cite{Katzinterpolation} and let
\begin{equation}
\label{constantE2}
C = \frac{a_1^2+4a_2-{\bf E}_2(E,\omega)}{12}.
\end{equation}
Let $h_p\colon E(\mathbb{Q})\to \mathbb{Q}_p$ be $(-2p)$ times the $p$-adic height of \cite{MST} and define
\begin{equation*}
c=\frac{h_p(z_0)}{\Log(z_0)^2},
\end{equation*}
for a non-torsion point $z_0\in E(\mathbb{Q})$. 

\begin{thm}
\label{level2rank1}
Suppose that $E$ has rank $1$ and that $p$ is a prime of good ordinary reduction. 
\begin{enumerate}
\item\label{trivialrank1}  If, for at least one of $q\in\{2,3\}$, the reduction of $E$ at $q$ is good and $\overline{E}(\mathbb{F}_q)=\{O\}$, or if $E$ has split multiplicative reduction of Kodaira type $\mathrm{I}_1$ at $2$, then
$$\mathcal{X}(\mathbb{Z}_p)_2 = \mathcal{X}(\mathbb{Z})=\emptyset.$$
\item\label{nontrivialrank1} Otherwise, 
\begin{equation*}
\mathcal{X}(\mathbb{Z})\subset \mathcal{X}(\mathbb{Z}_p)_2^{\prime}\colonequals\bigcup_{w\in W}\psi(w),
\end{equation*}
where
\begin{equation*}
\psi(w) = \{z\in\mathcal{X}(\mathbb{Z}_p) : 2D_2(z)+C(\Log(z))^2+||w|| = c(\Log(z))^2\}.
\end{equation*}
\end{enumerate}
\end{thm}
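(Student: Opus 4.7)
The strategy parallels the rank $0$ argument of Theorem \ref{level2rank0}, with the essential modification that the global $p$-adic height $h_p$ is no longer zero on $E(\mathbb{Q})$ but is instead a nonzero quadratic form, producing the additional term $c(\Log(z))^2$ in the defining equation of $\psi(w)$. The starting point is the decomposition of the Mazur--Stein--Tate height into local contributions
\[ h_p(z) = h_{p,p}(z) + \sum_{q \in S} h_{p,q}(z) \]
for $z \in \mathcal{X}(\mathbb{Z})$, where local terms at good primes different from $p$ vanish by integrality.

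The first task is to identify the local term at $p$. Following the $p$-adic sigma function (or double Coleman integral) construction of $h_{p,p}$, one obtains
\[ h_{p,p}(z) = 2D_2(z) + C(\Log(z))^2, \]
with the constant $C$ of \eqref{constantE2} encoding the difference between the normalization of $\omega$ and the one intrinsic to the sigma function through the Katz Eisenstein series ${\bf E}_2$. This computation is parallel to the rank $0$ situation and follows the lines of \cite{nonabelianconjecture}.

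The second, and main new, task is to show that for each bad prime $q$ one has $h_{p,q}(z) \in W_q$. By the standard properties of local $p$-adic heights away from $p$, the value $h_{p,q}(z)$ depends only on the component of the special fiber of the Néron model to which $z$ reduces, and therefore only on a class in the component group $\Phi_q$. A case-by-case analysis over the Kodaira types yields the values tabulated in Table \ref{table:1}, thereby removing the semistability hypothesis of \cite[Proposition 5.12]{nonabelianconjecture}. Setting $w_q = h_{p,q}(z)$ then produces an element $w \in W$ with $||w|| = \sum_{q \in S} h_{p,q}(z)$.

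Finally, the rank $1$ assumption (together with finiteness of the $p$-part of the Tate--Shafarevich group, needed to ensure $h_p$ is well defined) implies that $h_p$ and $\Log^2$ are proportional as $\mathbb{Q}_p$-valued quadratic forms on $E(\mathbb{Q}) \otimes \mathbb{Q}_p$, so $h_p(z) = c(\Log(z))^2$. Equating the two expressions for $h_p(z)$ gives exactly the equation defining $\psi(w)$, proving part (\ref{nontrivialrank1}). Part (\ref{trivialrank1}) follows as in Theorem \ref{level2rank0}(\ref{trivial}): the reduction assumptions at $2$ or $3$ force $\mathcal{X}(\mathbb{Z}) = \emptyset$ and simultaneously empty the relevant local factor entering the Selmer scheme computing $\mathcal{X}(\mathbb{Z}_p)_2$. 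The main obstacle throughout is the bad-prime analysis at additive reduction: reconciling the normalization of $h_{p,q}$ with the component-group contributions in Table \ref{table:1} requires careful bookkeeping, especially in the mixed reduction-type and Tamagawa number scenarios that were not treated in the semistable reference.
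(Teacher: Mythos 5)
Your argument is correct and is essentially the paper's own: part (\ref{nontrivialrank1}) is exactly the decomposition of the global $p$-adic height into local terms --- with $\lambda_p=2D_2+C\Log^2$ for the unit-root choice of $N_p$ (\S\ref{sec:heightsabovep}), $\lambda_q(z)\in W_q$ for integral $z$ at bad $q$ (Propositions \ref{propositionheightstamagawa1} and \ref{localheightsnontrivialtamagawa}, which is where the non-semistable bookkeeping lives), and vanishing at the remaining primes --- combined with the proportionality $h_p=c\Log^2$ forced by the rank-one hypothesis, while part (\ref{trivialrank1}) reduces to $\mathcal{X}(\mathbb{Z}_q)=\emptyset$ emptying the image of $j_q$ and hence the Selmer scheme, as in Theorem \ref{level2rank0}\thinspace(\ref{trivial}). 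One small correction: finiteness of the $p$-primary part of the Tate--Shafarevich group is neither assumed in this theorem nor needed for your argument --- the Coleman--Gross/Mazur--Tate height is defined unconditionally, and its proportionality to $\Log^2$ on a rank-one Mordell--Weil group is elementary (both quadratic functions vanish on torsion, so both are determined by their value at one non-torsion point); that hypothesis enters only the rank-$0$ cohomological argument via the Selmer group.
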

According to \cite{nonabelianconjecture}, the set $\cup_{w\in W}\psi(w)$ should equal $\mathcal{X}(\mathbb{Z}_p)_2$: hence the notation $\mathcal{X}(\mathbb{Z}_p)_2^{\prime}$. Section \ref{sec:heights_and_main} is devoted to the proofs of Theorems \ref{level2rank0} and \ref{level2rank1}.

The equations defining the sets of $p$-adic points of the two theorems can be given an elementary interpretation as linear relations amongst $\mathbb{Q}_p$-valued quadratic functions on $E(\mathbb{Q})$, dictated by the assumptions on the rank. More precisely, any global $p$-adic height $E(\Q)\to \Q_p$ (of Bernardi, Coleman--Gross, Mazur--Tate) vanishes identically if the rank is $0$, and is a scalar multiple of $\Log^2\vert_{E(\Q)}$ if the rank is $1$. To go from here to a $p$-adic approximation of the global integral points, one invokes the decomposition of the $p$-adic height on $E(\Q)\setminus\{O\}$ as a sum, over the non-archimedean primes $q$, of local $p$-adic heights $\lambda_q\colon E(\Q_q)\setminus\{O\}\to \Q_p$. Indeed, the restriction of $\lambda_q$ to $\mathcal{X}(\Z_q)\supset\mathcal{X}(\Z)$ has finite image for all $q\neq p$, zero image for almost all $q\neq p$, and is locally analytic for $q=p$.

This point of view is crucial in our investigation, in Section \ref{sec:obstructions}, of what points could arise in $\mathcal{X}(\mathbb{Z}_p)_2\setminus \mathcal{X}(\mathbb{Z})$ in rank $0$. Recall that no example of an elliptic curve of rank $0$ and a prime $p$ for which $\mathcal{X}(\Z_p)_2\supsetneq \mathcal{X}(\Z)$ was previously known. A careful study of the Mazur--Tate and Bernardi local $p$-adic heights allows us to deduce possible obstructions to the sharpness of $\mathcal{X}(\Z_p)_2$, and to give necessary and sufficient conditions for a point in $\mathcal{X}(\Z_p)\setminus \mathcal{X}(\Z)$ to belong to $\mathcal{X}(\Z_p)_2$.

First note that a point in $\mathcal{X}(\Z_p)_2$ is algebraic, since it is in the zero set of the abelian logarithm $\Log$. Our sufficient conditions then come from studying how automorphisms of $E/\overline{\Q}$ affect the values of the local $p$-adic heights at certain algebraic points, and from an analysis of non-cyclotomic $p$-adic heights over non-totally real number fields. A combination of these two phenomena explains the appearance of extra points at level $2$ in the family of quadratic twists of the modular curve $X_0(49)$ (see Proposition \ref{prop:twists_X049}). As an application, in \S \ref{sec:proof_of_nonequality} we prove Theorem \ref{thm:nonequality}.

As regards necessary conditions for $\mathcal{X}(\Z_p)_2$ to contain parasite points, we prove a sort of ``$p$-adic height saturation'' condition (see the discussion in \S \ref{sec:quad_sat}): 
\begin{thm}[Theorem \ref{thm:extrapoints}]
\label{thm:extrapointsintro}
Let $E/\Q$ and $p$ be as in Theorem \ref{level2rank0}. Suppose that $z\in \mathcal{X}(\mathbb{Z}_p)_2\setminus \mathcal{X}(\mathbb{Z})$. Then $z$ is the localisation of a torsion point $P$ over a number field $K$ and, for each rational prime $q$, the value $\lambda_{\mathfrak{q}}(P)$ of the local height at $\fq$ of the cyclotomic $p$-adic height of $E/K$ is independent of the prime $\mathfrak{q}\mid q$ of $K$.
\end{thm}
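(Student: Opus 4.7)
The plan is to apply Theorem \ref{level2rank0} to extract algebraic information about $z$, and then match this against the local--global decomposition of the cyclotomic $p$-adic height of the corresponding torsion point over a number field. Since $z\in \mathcal{X}(\Z_p)_2\setminus \mathcal{X}(\Z)$ is in particular nonempty, we are in case \eqref{nontrivial} of Theorem \ref{level2rank0}: $\Log(z) = 0$ and $2D_2(z) + ||w|| = 0$ for some $w = (w_q)_{q\in S}\in W$. The first equation places $z$ in the kernel of the formal logarithm $\Log\colon E(\Q_p)\to \Q_p$, which is the torsion subgroup of $E(\Q_p)$. Since every torsion point of $E(\overline{\Q}_p)$ is algebraic over $\Q$, the embedding $\overline{\Q}\hookrightarrow \overline{\Q}_p$ produces a number field $K$, a prime $\fp$ of $K$ above $p$ with $K_{\fp}\subseteq \Q_p$, and a torsion point $P\in E(K)$ whose image under $K\hookrightarrow K_{\fp}$ is $z$. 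This establishes the first assertion.

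For the second assertion, I would combine the vanishing of $h_{p,K}(P)$, a consequence of $h_{p,K}$ being a quadratic form on $E(K)\otimes \Q$ and hence trivial on torsion, with its decomposition into local contributions:
\begin{equation*}
0 = h_{p,K}(P) = \sum_{\fq} \lambda_{\fq}(P).
\end{equation*}
The explicit formula for the local cyclotomic height at primes above $p$, which will be established in Section \ref{sec:heights_and_main}, gives $\lambda_{\fp}(P) = 2D_2(z) + C(\Log(z))^2 = 2D_2(z)$ using $\Log(z) = 0$. Substituting $\lambda_{\fp}(P) = -||w||$ from the second equation of case \eqref{nontrivial} into the global decomposition yields
\begin{equation*}
\sum_q \sum_{\substack{\fq\mid q \\ \fq\neq \fp}} \lambda_{\fq}(P) = \sum_{q\in S} w_q.
\end{equation*}

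The final and most delicate step is to promote this scalar identity to the per-$q$ independence of $\lambda_{\fq}(P)$ across $\fq\mid q$. The key inputs here are: outside $S\cup\{p\}$ the local heights $\lambda_{\fq}(P)$ vanish, since $P$ reduces to a smooth point on the identity component of the Neron model; for $q\in S$, each $\lambda_{\fq}(P)$ is determined by the image of $P$ in the component group at $\fq$ and takes values in the finite set underlying $W_q$, which is a $\Q$-intrinsic invariant of the reduction type at $q$; for $\fq\mid p$ with $\fq\neq \fp$, $\lambda_{\fq}(P)$ is itself of the form $2D_2(z')$ for $z'$ another $p$-adic localisation of $P$. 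The hard part is to show that the right-hand side, being a sum of single $w_q\in W_q$, is rigid enough to force the component-group images of $P$ at the various $\fq\mid q$ to coincide (and similarly the $p$-adic contributions at the various $\fq\mid p$); I expect this to come from a Galois-equivariance argument exploiting the cyclotomic, hence $\Q$-descended, nature of the height, combined with the explicit description of $W_q$ in \textsc{Table} \ref{table:1}.
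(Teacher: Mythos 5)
Your first assertion (that $z$ is the localisation of a torsion point $P$ over a number field $K$) is correct and matches the paper. The second assertion, however, is where the actual content of the theorem lies, and your proposal does not prove it: you derive exactly one scalar identity, namely $\sum_{\fq\neq\fp_0} n_{\fq}\lambda_{\fq}(P)=[K:\Q]\,\|w\|$ (note you have also dropped the weights $n_{\fq}/[K:\Q]$ from the decomposition $h_p(P)=\frac{1}{[K:\Q]}\sum_v n_v\lambda_v(P)$), and a single linear relation among the many unknowns $\lambda_{\fq}(P)$ cannot force each of them to be independent of $\fq\mid q$. The Galois-equivariance you invoke does not supply the missing rigidity: since $P$ need not be defined over $\Q$, equivariance only gives $\lambda_{\fq}(P)=\lambda_{\sigma(\fq)}(\sigma(P))$, i.e.\ it trades varying the prime above $q$ for varying the Galois conjugate of $P$, and these values genuinely differ in general. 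Indeed, the whole point of \S\ref{sec:automorphisms} is that equality of the $\lambda_{\fq}(P)$ across $\fq\mid q$ is a special phenomenon requiring extra hypotheses (Galois acting through $\mathrm{Aut}(E/\overline{\Q})$, etc.); moreover the reduction type of $E/K_{\fq}$ and the component group can change from those over $\Q_q$ (cf.\ Example \ref{Example:2}), so $W_q$ is not the relevant value set over $K$.

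The paper's argument is of a different nature and is worth internalising. Writing $mP=O$ and expanding $-x/y\circ[m]$ and $f_m$ in a local parameter $t$ at $P$, one gets leading coefficients $a,c\in K^{\times}$, and \emph{every} local height of $P$ is computed from the single global element $a/c$: at $\fp\mid p$ one has $\lambda_{\fp}(P)=-\frac{2}{n_{\fp}m^2}\tr_{K_{\fp}/\Q_p}\log_{\fp}(a/c)$, and at $\fq\nmid p$ one has $\lambda_{\fq}(P)=\frac{1}{m^2}\log\bigl(\lvert c/a\rvert_{\fq}^{2}\bigr)$. The known value $\lambda_{\fp_0}(P)=-\|w\|=-\sum_{q\in S}\alpha_q\log q$ at the distinguished prime $\fp_0$, together with the fact that the kernel of the branch $\log$ on $\Q_p^{\times}$ is generated by $p$ and the roots of unity and that $\ordnop_{\fp_0}(a/c)=0$, forces $(a/c)^{2d}=\zeta\prod_{q\in S}q^{d\alpha_q m^2}$ for a root of unity $\zeta$. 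Substituting this explicit expression back into the two formulas above shows that $\lambda_{\fp}(P)$ and $\lambda_{\fq}(P)$ depend only on the residue characteristic, since $\lvert q\rvert_{\fq}=q^{-1}$ for every $\fq\mid q$ with the paper's normalisation. This global-element step is the idea your proposal is missing.
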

 
We then present in Section \ref{sec:Computations} the computations of the sets $\mathcal{X}(\mathbb{Z}_p)_2$ for all the elliptic curves over $\mathbb{Q}$ of rank $0$ and conductor less than or equal to $30,000$ and for some choices of $p$. We propose a slightly different but equivalent way of computing the set $\mathcal{X}(\mathbb{Z}_p)_2$, compared to the one used in \cite{nonabelianconjecture}. In particular, our method does not rely on general algorithms to compute double Coleman integrals, but rather uses Bernardi's and Mazur--Tate's description of the $p$-dic height on an elliptic curve to express the double Coleman integrals in terms of $p$-adic sigma functions.

Our computations (run on \texttt{SageMath} \cite{sage}) suggest that the failure of sharpness of $\mathcal{X}(\Z_p)_2$ is still to be considered a rare phenomenon, which we were always able to explain using the sufficient conditions of Section \ref{sec:obstructions}. Extra points become even more exceptional if we allow the prime $p$ to vary. In particular, we prove Theorem \ref{thm:nonequalitystrong} (see Theorem \ref{thm:variationp}).

In future work, it would be interesting to verify whether Conjecture \ref{cjc:Kim} holds at level $3$ for the curves and primes for which we found $\#\mathcal{X}(\mathbb{Z}_p)_2>\#\mathcal{X}(\mathbb{Z})$.

When $E$ has rank $1$, the set described in Theorem \ref{level2rank1}\thinspace (\ref{nontrivialrank1}) is generally larger than $\mathcal{X}(\mathbb{Z})$. Naively, this is because $\mathcal{X}(\Z_p)_2^{\prime}$ is cut out by the vanishing of one function only. In \S \ref{sec:algebraicpointsrank1}, we ask what algebraic points can belong to $\mathcal{X}(\mathbb{Z}_p)_2^{\prime}\setminus \mathcal{X}(\mathbb{Z})$. In \S \ref{sec:rank1computations} we compute $\mathcal{X}(\mathbb{Z}_p)_2^{\prime}$ for all the $14,783$ rank $1$ elliptic curves of conductor at most $5,000$; for each curve, we let $p$ be the smallest prime greater than or equal to $5$ at which the curve has good ordinary reduction.

Finally, in Section \ref{sec:bielliptic} we apply some of our techniques for elliptic curves to the computation of rational points on certain genus $2$ curves $C$ over $\Q$. 
Indeed, when $C$ admits degree $2$ maps to two elliptic curves over $\Q$, each of rank $1$, Balakrishnan--Dogra \cite{BDQCI} described a $\Q_p$-valued locally analytic function on $C(\Q_p)$ vanishing on $C(\Q)$. 
This is defined using local $p$-adic heights on each elliptic curve. 
We explain how one can replace direct computations of double Coleman integrals with computations involving the $p$-adic sigma function and division polynomials also in this situation. 
We make the computation explicit for a curve arising from a problem from the \emph{Arithmetica} of Diophantus and use it to give an alternative proof to the one given by Wetherell in his thesis \cite{wetherell} of the fact that the curve has exactly $8$ rational points. 

Balakrishnan--Dogra implemented their method numerically in some examples. 
However, algorithmically, their approach was still based on a case-by-case study. This consideration applies especially to a preliminary step which consists in the computation of two finite subsets of $\Q_p$ (which play the role of the set $||W||$ above). By combining results of \cite{BDQCI} with properties of local $p$-adic heights on elliptic curves, in \S \ref{sec:explicitWi} we offer a more general and applicable numerical approach to the method. For example, we give an algorithm that takes as input a bielliptic curve $C$ whose associated elliptic curves have Mordell--Weil rank $1$ together with a good prime $p$ and outputs a finite set of $p$-adic points containing $C(\mathbb{Q})$ (i.e.\ we remove the preliminary computation step). In doing so, we also provide a more elementary proof and approach to the explicit result of \cite{BDQCI}.

The code used for the computations in this article is available at \cite{codeQC_FB}.

\subsection*{Acknowledgements}
I am very grateful to Jennifer Balakrishnan for many useful discussions and for providing me with helpful feedback on several drafts.
This paper started from questions raised at group meetings organised by Minhyong Kim in which I took part together with Jamie Beacom, Noam Kantor and Alex Saad. I am indebted to all of them and thank Minhyong for encouraging me to write up.
I also thank the anonymous referee, as well as Steffen M\"uller, and my DPhil examiners Victor Flynn and Chris Wuthrich, for several useful suggestions, which led to many improvements to the article.
The author was supported by EPSRC and by Balliol College through a Balliol Dervorguilla scholarship, and by an NWO Vidi grant.

\section{Description of $\mathcal{X}(\mathbb{Z}_p)_2$}
\label{sec:heights_and_main}
\subsection{The $p$-adic height and its local components}
\label{padicheight}
Let $p$ be an odd prime and extend the usual $p$-adic logarithm $\log\colon 1+p\mathbb{Z}_p\to \mathbb{Q}_p$ to $\mathbb{Q}_p^{\times}$ via $\log(p)=0$.

Let $E$ be an elliptic curve over $\mathbb{Q}$ as in Section \ref{intro} and assume $E$ has good reduction at $p$. We will sometimes need to consider the base-change of $E$ to a number field $F$ (whose ring of integers is denoted $\OO_F$); thus, we do not restrict the following definitions to $E(\mathbb{Q})$. Let
\begin{equation*}
h_p\colon E(F)\to \mathbb{Q}_p
\end{equation*}
be a cyclotomic\footnote{\label{footnote:anticyclotomic}If the space of continuous idele class characters $\mathbb{A}_F^{\times}/F^{\times}\to \mathbb{Q}_p$ has dimension larger than $1$, we will see in \S \ref{sec:other_characters} that one can define other types of $p$-adic heights.} $p$-adic height of Coleman--Gross (see \cite{ColemanGross} and \cite{ColemanGrosspAdicSigma}). The use of the indefinite article here is due to the dependence of $h_p$ on a choice that will be made explicit in \S \ref{sec:heightsabovep}. The function $h_p$ is quadratic, i.e.\ satisfies the relation
\begin{equation*}
h_p(mP) = m^2 h_p(P) \ \text{for all}\ m\in\mathbb{Z}\ \text{and}\ P\in E(F),
\end{equation*}
and is defined as a sum of local heights, one for each non-archimedean prime of $F$. In particular, we have\footnote{Here we choose to normalise the $p$-adic height in such a way that it becomes independent of the choice of the field $F$ containing the coordinates of $P$; note that this is not the case in many other articles, such as \cite{MST}.} $h_p(O) = 0$ and for $P\in E(F)\setminus \{O\}$
\begin{equation*}
h_p(P) = \frac{1}{[F : \mathbb{Q}]}\sum_{v}n_v\lambda_v(P),
\end{equation*}
where the sum is over all finite primes $v$ of $F$, $n_v=[F_v:\mathbb{Q}_v]$ and 
\begin{equation*}
\lambda_v\colon E(F_{v})\setminus \{O\}\to \mathbb{Q}_p
\end{equation*}
is a \emph{$p$-adic local N\'eron function at $v$}.

Let $q_v$ be the norm of $v$ and $|\cdot|_v$ be the normalised absolute value corresponding to $v$. That is, if $x\in F_v^{\times}$, we have
\begin{equation*}
|x|_v = q_v^{-\text{ord}_v(x)/n_v},
\end{equation*}
where the valuation $\ordnop_v$ is such that $\ordnop_v(F_v^{\times})=\mathbb{Z}$.

\subsubsection{}\label{sec:heightsawayfromp}
 The $p$-adic local N\'eron function at a non-archimedean prime $v\nmid p$ is equal to the real local N\'eron function $\widehat{\lambda}_v$ at $v$ with the $p$-adic logarithm in place of the real one. Thus, any reference that we provide for $\widehat{\lambda}_v$ can be applied also to our setting. For instance, analogously to the real case, for $v\nmid p$, the following properties determine a unique function $\lambda_v\colon E(F_v)\setminus \{O\}\to \Q_p$:  
\begin{enumerate}[label=(\roman*)]
\item\label{property:continuous_and_bounded} $\lambda_v$ is continuous on $E(F_v)\setminus \{O\}$ and bounded on the complement of any neighbourhood of $O$ with respect to the $v$-adic topology.
\item\label{property:limit} $\lim_{P\to O}(\lambda_{v}(P)-\log|x(P)|_v)$ exists.
\item\label{property:quasipar} $\lambda_v$ satisfies the \emph{quasi-parallelogram law}: for all $P,Q\in E(F_v)$ such that $P,Q,P\pm Q\neq O$, we have
\begin{equation}
\label{eq:quasi_parallelogram_law}
\lambda_v(P+Q)+\lambda_v(P-Q) = 2\lambda_v(P)+2\lambda_v(Q)-2\log|x(P)-x(Q)|_v.
\end{equation}
\end{enumerate}
Uniqueness follows from topological reasons. For existence, it suffices to show that the $p$-adic analogue of $\widehat{\lambda}_v$ obtained as described above satisfies \ref{property:continuous_and_bounded}{}-\ref{property:quasipar} (see \cite[VI, Exercise 6.3]{silvermanadvancedtopics}).

We also have
\begin{enumerate}[label = (\roman*)]
\setcounter{enumi}{3}
\item\label{property:quasiquadratic} For all $P\in E(F_v)$ and all $m\geq 1$ with $mP\neq O$,
\begin{equation*}
\lambda_v(mP)=m^2\lambda_v(P) -2\log|f_m(P)|_v,
\end{equation*}
where $f_m$ is the $m$-th division polynomial of $E$ (see for instance \cite[III, Exercise 3.7]{silvermanAEC} for the definition of $f_m$).
We say that $\lambda_v$ is \emph{quasi-quadratic}.
\end{enumerate}
Moreover, uniqueness implies the following key fact:
\begin{enumerate}[label = (\roman*)]
\setcounter{enumi}{4}
\item\label{property:invarianceautoawayp}  If $\psi$ is an automorphism of $E$ defined over $F_v$, then, for all $P\in E(F_v)$, $\lambda_v(\psi(P))=\lambda_v(P)$.
\end{enumerate}
See also \cite{bernardi} for a more general transformation property under isogeny.

We wish to determine which values $\lambda_v$ can attain on $\mathcal{X}(\OO_v)$, where $\OO_v$ is the ring of integers of $F_v$ and $v\nmid p$.
For this, it will be convenient to assume that $\mathcal{E}$ is minimal at $v$. If that is not the case, we can always switch to a minimal equation at $v$ and use the following (\hspace{1sp}\cite[Lemma 4]{heightdifference}):
\begin{equation}
\label{lambdaqlambdaqmin}
\lambda_{v} = \lambda_{v}^{\min{}}+\frac{1}{6}\log|\Delta/\Delta^{\min}|_{v},
\end{equation}
where $\Delta$ denotes the discriminant and the superscript $\min$ has the obvious meaning. See also Remark \ref{remarkonminimalmodels}.

So assume for the rest of this subsection that $\lambda_v$ is computed with respect to a minimal model at the prime $v$.
 Denote by $\overline{E}_{ns}(\mathbb{F}_v)$ the group of non-singular points of the reduction of $E$ modulo $v$ and let
\begin{equation*}
E_0(F_v) = \{P\in E(F_v): \overline{P}\in \overline{E}_{ns}(\mathbb{F}_v)\}.
\end{equation*}
If $E$ has good reduction at $v$, we may also write $\overline{E}(\mathbb{F}_v)$ for $\overline{E}_{ns}(\mathbb{F}_v)$.

\begin{lemma}
\label{lemmalocalheights}
Suppose $v\nmid p$. Then
\begin{enumerate}[label=(\roman*)]
\item\label{goodreduction} If $P\in E_0(F_v)\setminus\{O\}$, $\lambda_v(P)=\log(\max\{1, |x(P)|_v\})$.
\item\label{nontrivialcosets} If $P\not\in E_0(F_v)$, $\lambda_v(P)$ depends exclusively on the image of $P$ in $E(F_v)/E_0(F_v)$.
\end{enumerate}
\end{lemma}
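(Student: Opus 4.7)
The plan is to reduce both claims to the classical statements for the real local N\'eron function $\widehat{\lambda}_v$ and invoke uniqueness. Since our $p$-adic $\lambda_v$ is defined by precisely the same construction as $\widehat{\lambda}_v$ with the real logarithm replaced by the $p$-adic one, and the characterising axioms \ref{property:continuous_and_bounded}--\ref{property:quasipar} have the same formal shape in both settings, the classical proofs (e.g.\ \cite[VI.4]{silvermanadvancedtopics}) transfer verbatim; they rely only on these axioms together with algebraic identities in the Weierstrass coordinates and division polynomials.

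For part \ref{goodreduction}, I would introduce the candidate $\mu_v\colon E_0(F_v)\setminus\{O\}\to \Q_p$, $\mu_v(P):=\log\max\{1,|x(P)|_v\}$, and check the restrictions of \ref{property:continuous_and_bounded}--\ref{property:quasipar} to $E_0$. Continuity and boundedness away from any neighbourhood of $O$ are immediate since $x$ has its only pole at $O$. The limit condition holds because a point $P\in E_1(F_v)$ satisfies $|x(P)|_v>1$, so $\mu_v(P)=\log|x(P)|_v$ on the kernel of reduction. The quasi-parallelogram law on $E_0$ reduces to a polynomial identity between the Weierstrass coordinates of $P\pm Q$; minimality of the model at $v$ ensures that the relevant resultants are $v$-units, so comparing valuations yields the desired identity. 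Uniqueness of $\lambda_v$ on $E_0(F_v)\setminus\{O\}$ then forces $\lambda_v=\mu_v$ there.

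For part \ref{nontrivialcosets}, the cleanest approach is via the quasi-quadratic property \ref{property:quasiquadratic}: if $m$ denotes the exponent of the finite component group $E(F_v)/E_0(F_v)$, then $mP\in E_0(F_v)$ for every $P\in E(F_v)$, and
\begin{equation*}
\lambda_v(P)=\frac{1}{m^2}\bigl(\lambda_v(mP)+2\log|f_m(P)|_v\bigr).
\end{equation*}
Part \ref{goodreduction} then evaluates the first summand explicitly. The task is to verify that each summand depends only on the $E_0$-coset of $P$: since every point in a fixed non-identity coset reduces to a common singular point of $\overline{E}$ (in the multiplicative case) or to a common non-identity component of the special fibre of the N\'eron model (in the additive case), the value of $x(P)$ is constant modulo $v$-integers as $P$ varies over the coset, and the behaviour of $f_m$ at such points can be read off from the reduction type. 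I expect the main obstacle to be precisely this case-by-case bookkeeping across the Kodaira types, which however is standard and already tabulated in \cite{silvermanadvancedtopics}; a formal alternative is to pick a reference point $Q$ in each non-identity coset and apply the quasi-parallelogram law to $P$ and $Q$ together with a sequence in $E_0(F_v)$ tending to $O$, using $\log|x(P)-x(Q)|_v\le 0$ and part \ref{goodreduction} to deduce $\lambda_v(P)=\lambda_v(Q)$.
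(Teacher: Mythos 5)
The paper itself does not prove this lemma; it simply cites \cite{SilvermanComputingHeights} and \cite[Proposition 5]{heightdifference}, so you are attempting more than the paper does. Your part \ref{goodreduction} is a correct outline of the standard argument. One small point to make explicit: you need the uniqueness statement to hold for functions defined only on the subgroup $E_0(F_v)$; this does work, because the difference of two candidates extends to a bounded continuous solution of the exact parallelogram law on the group $E_0(F_v)$ and hence vanishes, but it is not literally the uniqueness statement \ref{property:continuous_and_bounded}--\ref{property:quasipar} as stated for $E(F_v)$.

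Part \ref{nontrivialcosets}, however, has a genuine gap. In the decomposition $\lambda_v(P)=\frac{1}{m^2}\bigl(\lambda_v(mP)+2\log|f_m(P)|_v\bigr)$ the two summands are \emph{not} individually constant on an $E_0$-coset; only their sum is. For instance, for reduction type $\mathrm{I}_2$ the non-identity coset contains the $2$-torsion point $T$ reducing to the singular point, and as $P$ ranges over the coset towards $T$ one has $f_2(P)\to 0$ and $2P\to O$, so $\log|f_2(P)|_v\to-\infty$ while $\lambda_v(2P)=\log|x(2P)|_v\to+\infty$. So the ``bookkeeping'' you defer to cannot proceed term by term; one must control the combination, e.g.\ rewrite it as $\lambda_v(P)=\frac{1}{m^2}\log\max\{|f_m(P)|_v^2,|\phi_m(P)|_v\}$ (where $x\circ[m]=\phi_m/f_m^2$) and then verify, per Kodaira type, that this maximum depends only on the component --- which is exactly the explicit computation carried out in the cited references (or, in the multiplicative case, read off from the Tate-curve formula for $\lambda_v$). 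Your ``formal alternative'' also does not close the gap: applying the quasi-parallelogram law to $P$ and a sequence $R_k\to O$ in $E_0(F_v)$ yields $\lambda_v(P+R_k)+\lambda_v(P-R_k)=2\lambda_v(P)$ in the limit, a tautology by continuity, while applying it to two points $P,Q$ of the same coset introduces $\lambda_v(P+Q)$, which lies in the a priori different coset of $2P$ and is not yet controlled. As written, part \ref{nontrivialcosets} therefore needs either the explicit case-by-case computation or a citation.
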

\begin{proof}
See \cite{SilvermanComputingHeights} or \cite[Proposition 5]{heightdifference}.
\end{proof}

\begin{prop}
\label{propositionheightstamagawa1}
If $v\nmid p$ and $[E(F_v):E_0(F_v)]= 1$, then
\begin{equation*}
\lambda_v(\mathcal{X}(\OO_v))=\begin{cases}
\{0\} & \text{if}\ \#\overline{E}_{ns}(\mathbb{F}_v)>1\\
\emptyset & \text{otherwise}.
\end{cases}
\end{equation*}
\end{prop}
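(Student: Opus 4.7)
My proof proposal is quite short because the proposition follows essentially by combining part (i) of Lemma \ref{lemmalocalheights} with the surjectivity of reduction onto the non-singular locus.

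\medskip

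\textbf{Step 1 (the value is $0$ whenever the set is non-empty).} Let $P\in\mathcal{X}(\OO_v)$. Since the hypothesis $[E(F_v):E_0(F_v)]=1$ gives $E(F_v)=E_0(F_v)$, part \ref{goodreduction} of Lemma \ref{lemmalocalheights} applies and yields
\begin{equation*}
\lambda_v(P)=\log\max\{1,|x(P)|_v\}.
\end{equation*}
By definition of $\mathcal{X}(\OO_v)$, the coordinate $x(P)$ lies in $\OO_v$, so $|x(P)|_v\leq 1$ and hence $\lambda_v(P)=0$. Therefore $\lambda_v(\mathcal{X}(\OO_v))\subseteq\{0\}$, with equality whenever $\mathcal{X}(\OO_v)\neq\emptyset$.

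\medskip

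\textbf{Step 2 (when is $\mathcal{X}(\OO_v)$ empty?).} The plan is to identify $\mathcal{X}(\OO_v)$ with the preimage under reduction of $\overline{E}_{ns}(\mathbb{F}_v)\setminus\{\overline{O}\}$. A point $P\in E(F_v)\setminus\{O\}$ lies in $\mathcal{X}(\OO_v)$ if and only if both $x(P)$ and $y(P)$ are in $\OO_v$, equivalently if and only if the reduction $\overline{P}$ is not $\overline{O}$. Indeed, if $\overline{P}=\overline{O}$ then $P$ belongs to the formal group $E_1(F_v)$, so $\ordnop_v(x(P))\leq -2$ and $x(P)\notin\OO_v$; conversely if $\overline{P}\neq\overline{O}$ then the reduction lies in the affine chart and integrality of both coordinates follows. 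Using the Tamagawa-one hypothesis $E(F_v)=E_0(F_v)$, the reduction map
\begin{equation*}
E(F_v)=E_0(F_v)\longrightarrow \overline{E}_{ns}(\mathbb{F}_v)
\end{equation*}
is surjective (Hensel's lemma applied to the smooth part). Therefore $\mathcal{X}(\OO_v)\neq\emptyset$ if and only if $\overline{E}_{ns}(\mathbb{F}_v)\setminus\{\overline{O}\}$ is non-empty, i.e.\ if and only if $\#\overline{E}_{ns}(\mathbb{F}_v)>1$.

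\medskip

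\textbf{Obstacles.} There is no real difficulty here; the argument is essentially bookkeeping. The only point that requires a moment of care is Step 2, where one must verify that integrality of the coordinates is equivalent to non-trivial reduction, and that the Tamagawa-one hypothesis upgrades the reduction map on $E_0(F_v)$ to a surjection onto all of $\overline{E}_{ns}(\mathbb{F}_v)$. Everything else is an immediate consequence of Lemma \ref{lemmalocalheights}.
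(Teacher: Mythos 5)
Your proof is correct and follows essentially the same route as the paper: Step 1 is the paper's application of Lemma \ref{lemmalocalheights}\thinspace\ref{goodreduction} together with integrality of $x(P)$, and Step 2 is the paper's use of the exact sequence $0\to E_1(F_v)\to E_0(F_v)\to \overline{E}_{ns}(\mathbb{F}_v)\to 0$ (Silverman VII, Proposition 2.1), which is exactly the surjectivity-of-reduction statement you invoke via Hensel's lemma.
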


\begin{proof}
By Lemma \ref{lemmalocalheights}\thinspace\ref{goodreduction}, $\lambda_v(\mathcal{X}(\OO_v))\subseteq \{0\}$, with equality if and only if $\mathcal{X}(\OO_v)\neq \emptyset$. Let 
\begin{equation*}
E_1(F_v)=\{P\in E(F_v): \overline{P}=\overline{O}\};
\end{equation*}
in particular, $\mathcal{X}(\OO_v)\cap E_1(F_v)$ is empty and every point in $E_0(F_v)\setminus E_1(F_v)$ comes from a point in $\mathcal{X}(\OO_v)$. According to \cite[VII, Proposition 2.1]{silvermanAEC}, the sequence
\begin{equation*}
0\to E_1(F_v)\to E_0(F_v)\to \overline{E}_{ns}(\mathbb{F}_v)\to 0
\end{equation*}
is exact, which proves the proposition.
\end{proof}

We now give an elementary necessary condition for $\#\overline{E}_{ns}(\mathbb{F}_v)=1$. We show it is also a sufficient condition in all cases except when $v$ is of good reduction.
\begin{lemma}
\label{conditionsonq}
The group $\overline{E}_{ns}(\mathbb{F}_v)$ has cardinality at least $2$ in all of the following cases:
\begin{enumerate}
\item $E$ has additive or non-split multiplicative reduction at $v$;
\item $E$ has good reduction at $v$ and $q_v>4$;
\item $E$ has split multiplicative reduction at $v$ and $q_v>2$.
\end{enumerate}
Conversely, if $E$ has split multiplicative reduction at $v$ and $q_v=2$, then $$\#\overline{E}_{ns}(\mathbb{F}_v)=1.$$
\end{lemma}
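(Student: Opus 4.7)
The plan is to go case by case through the reduction types listed in the statement, in each instance using the standard identification of $\overline{E}_{ns}(\mathbb{F}_v)$ as an explicit commutative algebraic group over $\mathbb{F}_v$; see, e.g., \cite[VII, Proposition 5.1]{silvermanAEC}. Once that identification is in place, computing $\#\overline{E}_{ns}(\mathbb{F}_v)$ is mechanical, and both the forward direction and the converse drop out immediately.

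First I would handle \emph{additive} reduction: here $\overline{E}_{ns}(\mathbb{F}_v)$ is isomorphic to the additive group $\mathbb{G}_{a,\mathbb{F}_v}(\mathbb{F}_v)$, of order $q_v \geq 2$. Next, for \emph{non-split multiplicative} reduction, $\overline{E}_{ns}(\mathbb{F}_v)$ is the kernel of the norm map $\mathbb{F}_{v^2}^{\times}\to\mathbb{F}_v^{\times}$, which has order $q_v+1\geq 3$. For \emph{split multiplicative} reduction, $\overline{E}_{ns}(\mathbb{F}_v)\cong \mathbb{G}_{m,\mathbb{F}_v}(\mathbb{F}_v)=\mathbb{F}_v^{\times}$, which has order $q_v-1$; this is at least $2$ precisely when $q_v > 2$, and equal to $1$ when $q_v = 2$. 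The last observation simultaneously handles the third listed case and the converse statement at the end of the lemma.

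Finally, for \emph{good} reduction I would invoke the Hasse bound
\begin{equation*}
\#\overline{E}_{ns}(\mathbb{F}_v) = \#\overline{E}(\mathbb{F}_v) \geq q_v + 1 - 2\sqrt{q_v} = (\sqrt{q_v}-1)^2,
\end{equation*}
so that $q_v > 4$, i.e.\ $\sqrt{q_v}>2$, forces $\#\overline{E}(\mathbb{F}_v) > 1$.

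There is no real obstacle here: the entire argument is an arithmetic-free appeal to the classification of reduction types together with Hasse's theorem. The only mild care needed is to make sure the group-scheme identification is applied over the correct base field $\mathbb{F}_v$ (so in particular that $\mathbb{G}_a$ is evaluated at $\mathbb{F}_v$-points, giving $q_v$ elements and not $1$), and that the Hasse bound is stated in terms of $q_v$ rather than $p$, which is why the threshold for the good-reduction case is $q_v > 4$ rather than $p > 4$.
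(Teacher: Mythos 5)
Your proof is correct and follows essentially the same route as the paper: identify $\overline{E}_{ns}(\mathbb{F}_v)$ with $\mathbb{G}_a$, $\mathbb{G}_m$, or the norm-one torus according to the reduction type, and invoke the Hasse bound in the good case. The only (harmless) difference is in the non-split multiplicative case, where you count the kernel of the norm map uniformly as $q_v+1$ via surjectivity of the norm, whereas the paper checks the $q_v=2$ case by hand inside $\mathbb{F}_4^{\times}$.
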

\begin{proof}
If $E$ has additive reduction at $v$, then $\overline{E}_{ns}(\mathbb{F}_v)\cong \mathbb{F}_v^+$ always contains at least two elements. 
If the reduction is non-split multiplicative, then
\begin{equation*}
\overline{E}_{ns}(\mathbb{F}_v)\cong \{a\in k^{\times}:N_{k/\mathbb{F}_v}(a)=1\},
\end{equation*}
where $[k:\mathbb{F}_v]=2$ and $N_{k/\mathbb{F}_v}$ is the field norm of $k/\mathbb{F}_v$. Thus, if $q_v>2$, then the statement is clear; if $q_v=2$, then $k$ is the splitting field of $x^4-x$ over $\mathbb{F}_2$ and each element in $k^{\times}$ has norm $1$ over $\mathbb{F}_2$.

When $E$ has good reduction at $v$ and $q_v>4$, the Hasse bound yields $\#\overline{E}(\mathbb{F}_v)>1$. Finally, if the reduction is split multiplicative, then $\overline{E}_{ns}(\mathbb{F}_v)\cong \mathbb{F}_v^{\times}$.
\end{proof}

\begin{prop}
\label{localheightsnontrivialtamagawa}
If $v\nmid p$ and $[E(F_v):E_0(F_v)]\neq 1$, then
\begin{equation*}
n_v\lambda_v(\mathcal{X}(\OO_v))=\begin{cases}
W_v^{\mathrm{bad}} & \text{if}\ q_v=2\ \text{and}\ E\ \text{is split multiplicative at}\ v,\\
W_v^{\mathrm{bad}} \cup\{0\} & \text{otherwise},
\end{cases}
\end{equation*}
where $W_v^{\mathrm{bad}}$ is defined in \textsc{Table} 1.
\end{prop}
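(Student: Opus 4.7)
The plan is to decompose $\mathcal{X}(\OO_v)$ by component of reduction and apply Lemma \ref{lemmalocalheights} coset by coset. Write
$$\mathcal{X}(\OO_v) \;=\; \bigl(E_0(F_v)\setminus E_1(F_v)\bigr)\;\sqcup\;\bigsqcup_{\overline{P}\not\in \overline{E}_{ns}(\mathbb{F}_v)} \bigl(P+E_0(F_v)\bigr)\cap \mathcal{X}(\OO_v),$$
using the fact (from \cite[VII.2.1]{silvermanAEC}) that a point is integral in the minimal model precisely when it does not reduce to $O$, i.e.\ when it lies outside $E_1(F_v)$.

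For the identity-coset contribution, any $P\in E_0(F_v)\setminus E_1(F_v)$ is integral with $|x(P)|_v\le 1$, so Lemma \ref{lemmalocalheights}\ref{goodreduction} gives $\lambda_v(P)=\log(\max\{1,|x(P)|_v\})=0$. By the exact sequence $0\to E_1(F_v)\to E_0(F_v)\to \overline{E}_{ns}(\mathbb{F}_v)\to 0$, this contribution is nonempty iff $\#\overline{E}_{ns}(\mathbb{F}_v)\ge 2$. By Lemma \ref{conditionsonq}, the only situation in the non-trivial-Tamagawa setting where this fails is $q_v=2$ with $E$ split multiplicative; in every other reduction case the value $0$ therefore lies in $n_v\lambda_v(\mathcal{X}(\OO_v))$.

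For the non-identity cosets, each non-trivial coset is represented by points outside $E_1(F_v)$ (since $E_1\subseteq E_0$), hence by integral points in $\mathcal{X}(\OO_v)$. Lemma \ref{lemmalocalheights}\ref{nontrivialcosets} says $\lambda_v$ is constant on each such coset, so these cosets contribute a finite set of values, indexed by the non-identity components of the N\'eron model, to $n_v\lambda_v(\mathcal{X}(\OO_v))$. It remains to identify this finite set with the tabulated $W_v^{\mathrm{bad}}$: the standard Kodaira-by-Kodaira computation of local heights (for instance via iterated application of the quasi-parallelogram law \ref{property:quasipar} together with Tate's algorithm) yields the explicit coset values recorded in Table \ref{table:1}. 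This is the only non-formal piece of the argument and is the main technical step; however, it is an entirely routine transcription into the $p$-adic setting of the archimedean computations of \cite{SilvermanComputingHeights} and \cite{heightdifference}, because properties \ref{property:continuous_and_bounded}--\ref{property:quasipar} determine $\lambda_v$ uniquely and the derivations of the coset values use only these properties.

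Combining both contributions yields the two cases of the proposition: in the exceptional case $q_v=2$ with split multiplicative reduction, only non-identity cosets contribute and the output is $W_v^{\mathrm{bad}}$; otherwise the identity coset contributes $\{0\}$ as well, yielding $W_v^{\mathrm{bad}}\cup\{0\}$.
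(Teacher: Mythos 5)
Your argument is correct and follows essentially the same route as the paper: decompose by cosets of $E_0(F_v)$, use Lemma \ref{lemmalocalheights} to get the value $0$ on the identity coset (nonempty except when $q_v=2$ and $E$ is split multiplicative, by Lemma \ref{conditionsonq}) and constancy on the non-trivial cosets, then import the coset values from the archimedean computations of Cremona--Prickett--Siksek with the $p$-adic logarithm in place of the real one. The paper's proof does exactly this, likewise deferring the Kodaira-by-Kodaira identification of the coset values to \cite[Proposition 6 and Table 2]{heightdifference}.
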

\begin{table}
\begin{center}
\begin{tabular}{ |c|c|c| } 
\hline
Kodaira symbol & $[E(F_v):E_0(F_v)]$ & $W_v^{\mathrm{bad}}$\\
\hline
\multirow{2}{5em}{$\mathrm{I}_n (n\geq 2)$} & $2$ (non-split) & $\{-\frac{n}{4}\log q_v\}$ \\ 
& $n$ (split) & $\{-\frac{i(n-i)}{n}\log q_v: 1\leq i\leq \left \lfloor{\frac{n}{2}}\right \rfloor \}$ \\ 
\hline
\multirow{1}{1em}{$\mathrm{III}$} & $2$  & $\{-\frac{1}{2}\log q_v\}$ \\ 
\hline
\multirow{1}{1em}{$\mathrm{IV}$} & $3$  & $\{-\frac{2}{3}\log q_v\}$ \\
\hline
\multirow{1}{1em}{$\mathrm{I}_0^*$} & $2$ or $4$  & $\{-\log q_v\}$\\
\hline
\multirow{2}{5em}{$\mathrm{I}_n^*(n\geq 1)$} & $2$  & $\{-\log q_v\}$\\
& $4$ & $\{-\log q_v, -\frac{n+4}{4}\log q_v \}$\\
\hline
\multirow{1}{1em}{$\mathrm{IV}^*$} & $3$ & $\{-\frac{4}{3}\log q_v\}$\\
\hline
\multirow{1}{1em}{$\mathrm{III}^*$} & $2$ & $\{-\frac{3}{2}\log q_v\}$\\
\hline
\end{tabular}
\end{center}
\caption{The sets $W_v^{\mathrm{bad}}$.}
\label{table:1}
\end{table}
\begin{proof}
By a similar argument to the proof of Proposition \ref{propositionheightstamagawa1}, each non trivial coset of $E(F_v)/E_0(F_v)$ is represented by an element in $\mathcal{X}(\OO_v)$ and there exists at least one point in $\mathcal{X}(\OO_v)$ which reduces to a non-singular point in $\overline{E}_{ns}(\mathbb{F}_v)$ if and only if $\#\overline{E}_{ns}(\mathbb{F}_v)>1$.

By Lemma \ref{lemmalocalheights}\thinspace \ref{goodreduction}, if $P\in\mathcal{X}(\OO_v)$ reduces to a non-singular point, then $\lambda_v(P)=0$; by Lemma \ref{conditionsonq}, such $P$ exists unless $q_v=2$ and $E$ is split multiplicative at $2$.

Therefore, by Lemma \ref{lemmalocalheights}\thinspace\ref{nontrivialcosets}, it suffices to show that $W_v^{\mathrm{bad}}$ coincides exactly with the values of $n_v\lambda_v$ on $E(F_v)/E_0(F_v)\setminus\{0\}$. For this, we use the work of Cremona--Prickett--Siksek\cite{heightdifference} for the local heights of the real canonical height. The proof of Proposition 6 in \emph{loc.\ cit.}\ can be used verbatim here with the $p$-adic logarithm in place of the real one and \textsc{Table} \ref{table:1} is nothing but the translation of \cite[Table 2]{heightdifference} to the $p$-adic setting.
\end{proof}

\subsubsection{}\label{sec:heightsabovep}The $p$-adic local N\'eron function at a prime $v\mid p$ is not unique: it depends on a choice of subspace $N_v\subset H_{\mathrm{dR}}^1(E/F_v)$ complementary to the space of holomorphic differentials (see \cite{ColemanGross}). Let $\xi_v$ be the one-form of the second kind with a double pole at $O$ and no others, representative of the class in $N_v$ dual to $\omega$ with respect to the cup product (i.e.\ such that $[\omega]\cup [\xi_v]=1$). Let $\tr_{F_v/\mathbb{Q}_p}$ denote the field trace. Then by \cite[Theorem 4.1]{ColemanGrosspAdicSigma}, for all $P\in E(F_v)\setminus\{O\}$ one has 
\begin{equation*}
\lambda_v(P) = \frac{1}{n_v}\tr_{F_v/\mathbb{Q}_p}\left(2\int_b^P\omega\xi_v\right).
\end{equation*}
In particular,
\begin{equation*}
\xi_v = \eta + \gamma \omega \quad \text{for some}\ \gamma\in F_v
\end{equation*}
and hence
\begin{equation*}
\lambda_v(P) = \frac{1}{n_v}\tr_{F_v/\mathbb{Q}_p}(2D_2(P) + \gamma\Log(P)^2).
\end{equation*}
In \cite[Corollary 3.2]{ColemanGrosspAdicSigma}, it is shown that if $E$ has good ordinary reduction at $v$ and $N_v$ is the unit root eigenspace of Frobenius, then $\lambda_v$ is related to the logarithm of the $v$-adic sigma function of Mazur--Tate \cite{padicsigma}.

In fact, it is easy to see that their proof shows the following stronger result.
\begin{prop}
\label{sigmadilogarithm}
Let $x(t)$ be the Laurent series expansion of $x$ in terms of the parameter for the formal group $t=-\frac{x}{y}$. Let $\sigma_v^{(\gamma)}(t)= t+\cdots \in F_v[[t]]$ be the unique odd\footnote{Odd as a function on a subset of the formal group and not as a function of $t$.} function satisfying
\begin{equation*}
x(t)+\gamma = -\frac{d}{\omega}\left(\frac{1}{\sigma_v^{(\gamma)}} \frac{d\sigma_v^{(\gamma)}}{\omega}\right)
\end{equation*}
and let $V$ be a neighbourhood of $O$ on which $\sigma_v^{(\gamma)}$ converges. Then, for all $P\in V\setminus\{O\}$, we have
\begin{equation*}
\lambda_v(P) = -\frac{2}{n_v}\tr_{F_v/\mathbb{Q}_p}(\log_v(\sigma_v^{(\gamma)}(P))),
\end{equation*}
where $\log_v\colon F_v^{\times}\to F_v$ extends $\log$.
\end{prop}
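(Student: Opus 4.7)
The natural plan is to adapt verbatim the proof of Coleman--Gross \cite[Corollary 3.2]{ColemanGrosspAdicSigma}, where the corresponding identity is established when $\gamma$ arises from the unit root eigenspace of Frobenius on $H^1_{\mathrm{dR}}(E/F_v)$. Their argument invokes only the defining ODE, not the specific value of $\gamma$, so it applies verbatim to any $\gamma \in F_v$.

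First I would establish existence and uniqueness of $\sigma_v^{(\gamma)}$ as a formal power series. Writing $\sigma_v^{(\gamma)}(t) = t + \sum_{n\geq 3} c_n t^n$ and equating coefficients in the ODE produces a linear recursion for the $c_n$ in terms of the previous ones and the coefficients of $x(t)+\gamma$ and $\omega/dt$; oddness forces $c_n = 0$ for $n$ even, and the normalisation fixes $c_1=1$. A standard $p$-adic estimate on the growth of the $c_n$ then supplies a disc $V$ around $O$ on which $\sigma_v^{(\gamma)}$ converges.

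Via the formula $n_v\lambda_v(P) = \tr_{F_v/\mathbb{Q}_p}(2D_2(P) + \gamma\Log(P)^2)$ derived in \S\ref{sec:heightsabovep}, the proposition reduces to the local identity
\begin{equation*}
-2\log_v(\sigma_v^{(\gamma)}(P)) = 2D_2(P) + \gamma\,\Log(P)^2 \qquad (P \in V\setminus\{O\}).
\end{equation*}
To prove this, introduce the operator $D$ of ``differentiation with respect to $\omega$'' defined by $df = (Df)\,\omega$. The ODE for $\sigma_v^{(\gamma)}$ reads $D^2(-\log_v\sigma_v^{(\gamma)}) = x+\gamma$; with appropriate conventions for iterated Coleman integrals a direct computation yields $D^2(D_2) = x$ and $D^2(\tfrac{1}{2}\Log^2) = 1$. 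Consequently, both sides of the displayed identity satisfy $D^2 f = 2(x+\gamma)$, so their difference is of the form $\alpha\Log(P) + \beta$ for some $\alpha, \beta \in F_v$. The oddness of $\sigma_v^{(\gamma)}$ (together with $\log_v(-1)=0$) implies that $\log_v\sigma_v^{(\gamma)}$ is invariant under $[-1]$; the analogous invariance of $D_2$ and $\Log^2$ relative to the tangential base point $b$ forces $\alpha = 0$. Inspecting leading-order behaviour at $O$ (both sides are asymptotic to $-2\log t$ as $t\to 0$) then pins down $\beta = 0$.

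The main difficulty lies in this last step: both sides have logarithmic singularities at $O$, so the ``constants'' $\alpha, \beta$ can only be extracted after a careful analysis using the tangential base point $b$ and the regularisation of the Coleman integral of $\eta$ past its double pole. This is precisely the technical heart of Coleman--Gross's proof. Crucially, it depends only on the ODE and the normalisation of $\sigma_v^{(\gamma)}$, and not on how $\gamma$ was produced, which is why their result extends to arbitrary $\gamma \in F_v$ as claimed.
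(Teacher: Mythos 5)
Your proposal is correct and matches the paper's own (very brief) justification, which simply observes that the Coleman--Gross proof of \cite[Corollary 3.2]{ColemanGrosspAdicSigma} uses only the defining differential equation for $\sigma_v^{(\gamma)}$ and therefore carries over to arbitrary $\gamma\in F_v$. Your fleshed-out version of that argument (reduction to $-2\log_v\sigma_v^{(\gamma)}=2D_2+\gamma\Log^2$, the $D^2$ computation, and elimination of the integration constants via oddness and the asymptotics at $O$) is exactly the intended route.
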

For our applications, we may assume that there is an isomorphism $F_v\simeq \mathbb{Q}_p$, which is now fixed.
 Since $\lambda_v$ is not unique, we will use the following convention. If the reduction is good \emph{ordinary} at each prime $v$ above $p$, we choose $N_v$ to be the unit root eigenspace of Frobenius, i.e.
\begin{equation*}
\gamma = C,
\end{equation*}
where $C$ is defined in (\ref{constantE2}). If $P$ belongs to the formal group at $v$, then Proposition \ref{sigmadilogarithm} says that
\begin{equation*}
\lambda_v(P) = -2\log(\sigma_p(P)),
\end{equation*}
where $\sigma_p$ is Mazur--Tate's $p$-adic sigma function. Furthermore, in this case the global $p$-adic height coincides with the $p$-adic height of Mazur--Tate. 

If $E$ is good \emph{supersingular} at some prime $v\mid p$, we let, for each $v\mid p$,
\begin{equation*}
\gamma = \frac{a_1^2+4a_2}{12},
\end{equation*}
so that $\sigma_p^{(\gamma)}$ is the $p$-adic sigma function of Bernardi \cite{bernardi}. This choice of $\gamma$ gives a power series $\sigma_v^{(\gamma)}(t)$ with coefficients in $F$ (and in fact $\Q$ in our case), which is related to the Taylor expansion $\sigma(z)$ of the complex Weierstrass sigma function by the change of variables $z=L_v(t)$, where $L_v$ is the formal group logarithm. Unlike the $p$-adic sigma function of Mazur and Tate, the one of Bernardi does not converge on the whole formal group over $\overline{F_v}$, as it may not have $p$-adically integral coefficients, as a power series in $t$. However, since we are assuming that $F_v\simeq \mathbb{Q}_p$, the function $\sigma_p^{(\gamma)}$ converges on all the points $P$ of the formal group whose coordinates are defined over $F_v$, since these satisfy $\ordnop_v(t(P))>\frac{1}{p-1}$.

In both the ordinary and supersingular cases, $\lambda_v$ satisfies (see \cite{ColemanGross}, \cite{padicsigma}, \cite{bernardi}):
\begin{enumerate}[label=(\roman*)]
\item $\lambda_v$ is locally analytic on $\mathcal{X}(\OO_v)$.
\item\label{prop:transformationunderm} For all
$P\in E(F_v)$ and all $m\geq 1$ with $mP\neq O$,
\begin{equation*}
\lambda_v(mP)=m^2\lambda_v(P) -\frac{2}{n_v}\tr_{F_v/\mathbb{Q}_p}(\log_v(f_m(P))).
\end{equation*}
\item \label{prop:quasi_parallelogram_law_above_p} For all $P,Q\in E(F_v)$ such that $P,Q,P\pm Q\neq O$,
\begin{equation*}
\lambda_v(P+Q)+\lambda_v(P-Q) = 2\lambda_v(P)+2\lambda_v(Q)-\frac{2}{n_v}\tr_{F_v/\mathbb{Q}_p}(\log_v(x(P)-x(Q))).
\end{equation*}
\item\label{prop:transformationunderauto} If $\psi$ is an automorphism of $E$ defined over $F_v$, then, for all $P\in E(F_v)$, $\lambda_v(\psi(P))=\lambda_v(P)$. In view of the assumption that $F_v\simeq\mathbb{Q}_p$ and by Deuring's criterion, at supersingular primes this is simply saying that $\lambda_v$ is an even function. At ordinary primes, let $\zeta$ be the root of unity such that $\psi^{*}(\omega) = \zeta \omega$. Then $f_m(\psi(P)) = \zeta^{1-m^2}f_m(P)$ by \cite[Appendix I, Proposition 2]{padicsigma}, and the Mazur--Tate $p$-adic sigma function satisfies $\sigma_p(\psi(P)) = \zeta \sigma_p(P)$ if $P$ is in the formal group \cite[\S 3]{padicsigma}. The claim then follows since $\log(\zeta)= 0$. Note that invariance under any automorphism would also hold if we used the Bernardi sigma function to define the local heights at ordinary primes, since for curves of $j$-invariant $0$ or $1728$ the weight 2 Eistenstein series vanishes, so the Bernardi and Mazur--Tate $p$-adic sigma function are equal.
\end{enumerate}
We also remark that if $L/F$ is a finite field extension, $w$ is a prime of $L$ above $v$, where $v$ is any prime of $F$, and $P\in E(F_v)$, then
\begin{equation*}
\lambda_v(P)=\lambda_w(P).
\end{equation*}

\subsection{Proof of Theorems \ref{level2rank0} and \ref{level2rank1}}
\label{sec:proofs}
It would be pointless to reproduce here the whole proofs, as they are straightforward from \S \ref{padicheight} and the proofs in \cite{nonabelianconjecture}. Thus we content ourselves with giving a sketch and correcting a few imprecisions in Theorem 1.12 of \emph{loc.\ cit.}

We start with some notation and we refer the reader to \cite{KimP1},\cite{Kimunipotent}, \cite{nonabelianconjecture} for more details. Let $T=S\cup\{p\}$ and denote by $G_T$ the Galois group of the maximal extension of $\mathbb{Q}$ unramified outside $T$. For a prime $q$, write $G_{q}$ for the absolute Galois group of $\mathbb{Q}_q$. For $q\in T$, $G_q$ may be identified with a subgroup of $G_T$. For $q\not\in T$, this is not possible; however, we may still define maps $G_q\to G_T$ which are trivial on the inertia subgroup $I_q\leq G_q$.

Let $U$ be the unipotent $p$-adic \'etale fundamental group of $\mathcal{X}_{\overline{\mathbb{Q}}}$ at $b$ and $U_n$ the quotient of $U$ by the $n$-th level of its central series.

For each prime $q$ and $n\geq 1$, we have commutative diagrams
\begin{equation*}
\begin{tikzcd}
\mathcal{X}(\mathbb{Z}) \arrow[r] \arrow[d] & \mathcal{X}(\mathbb{Z}_q) \arrow[d, "j_q^n"] \\
H^1_f(G_T,U_n) \arrow[r, "\mathrm{loc}^n_q"] & H^1(G_q,U_n);
\end{tikzcd}
\end{equation*}
here the $H^1$ are cohomology sets and $H^1_f(G_T,U_n)=(\mathrm{loc}^n_p)^{-1}(H_f^1(G_p,U_n))$, where $H_f^1(G_p,U_n)$ is the subset of $H^1(G_p,U_n)$ of crystalline $U_n$-torsors. 
We are interested in determining
\begin{equation*}
\mathcal{X}(\mathbb{Z}_p)_n = (j_p^n)^{-1}(\mathrm{loc}^n_p(\mathrm{Sel}^n(\mathcal{X}))),
\end{equation*}
where the Selmer scheme $\mathrm{Sel}^n(\mathcal{X})$ is defined as 
\begin{equation*}
\mathrm{Sel}^n(\mathcal{X}) = \bigcap_{q\neq p}(\mathrm{loc}^n_q)^{-1}(\mathrm{Im} j_q^n).
\end{equation*}
From now on, we will focus on $n=2$ and will drop the superscript $n$ from the maps $j_q$ and $\mathrm{loc}_q$. 
\begin{proof}[Proof of Theorem \ref{level2rank0}] 
If $\mathcal{X}(\mathbb{Z}_q)$ is empty for some $q$, then $\mathcal{X}(\mathbb{Z}_p)_2$ is trivially empty. Lemma \ref{conditionsonq} shows that this occurs precisely when $E$ has good reduction at $q$, where $q=2$ or $3$, and $\overline{E}(\mathbb{F}_q)=\{O\}$, or when $E$ has split multiplicative reduction of type $\mathrm{I}_1$ at $q=2$. This shows (\ref{trivial}).

We may now suppose that $\mathcal{X}(\mathbb{Z}_q)\neq \emptyset$ for all $q$ (including $q=p$).
Since $E(\mathbb{Q})$ has rank $0$ and the $p$-primary part of the Tate--Shafarevich group is finite, by Lemma 5.2 in \cite{nonabelianconjecture},
$$\mathrm{Sel}^2(\mathcal{X})\subset H_f^1(G_T,\mathbb{Q}_p(1)),$$
where $H_f^1(G_T,\mathbb{Q}_p(1))\subset H_f^1(G_T,U_2)$
via $0\to \mathbb{Q}_p(1)\to U_2\to V_p(E)\to 0$.

For $q\neq p$, we have
\begin{equation*}
j_q\colon \mathcal{X}(\mathbb{Z}_q)\to H^1(G_q,U_2) \simeq H^1(G_q,\mathbb{Q}_p(1))\simeq \mathbb{Q}_p,
\end{equation*}
the last map being $c\mapsto \log\chi \cup c\in H^2(G_q,\mathbb{Q}_p(1))\simeq \mathbb{Q}_p$, where $\chi$ is the $p$-adic cyclotomic character and $\cup$ is the cup product (note $\log\chi\in H^1(G_q,\Q_p)$). The  middle bijection is proved in \cite[\S 4.1.5]{nonabelianconjecture}.
Thus, finding the image of $j_q$ is equivalent to finding
\begin{equation*}
\{\phi_q(z) \colonequals \log\chi\cup j_q(z) : z\in\mathcal{X}(\mathbb{Z}_q) \}.
\end{equation*}
Theorem 4.1.6 in \cite{nonabelianconjecture} shows that 
\begin{equation*}
2\phi_q\colon \mathcal{X}(\mathbb{Z}_q)\to \mathbb{Q}_q,\qquad z\mapsto  2(\log\chi\cup j_q(z))
\end{equation*}
is the restriction to $\mathcal{X}(\mathbb{Z}_q)$ of a $p$-adic local N\'eron function in the sense of \S \ref{sec:heightsawayfromp} and must thus be equal to the function $\lambda_q$.

In particular, for each $q\neq p$, the set $2\phi_q(\mathcal{X}(\mathbb{Z}_q))$ is the finite set described by Proposition \ref{propositionheightstamagawa1} and Proposition \ref{localheightsnontrivialtamagawa}.

The cup product $\log\chi\cup c$, for $c\in H_f^1(G_p,\mathbb{Q}_p(1))$, and local reciprocity also yield an isomorphism
\begin{equation*}
\psi_p\colon H_f^1(G_p,\mathbb{Q}_p(1))\xrightarrow{\sim} H^2(G_p,\mathbb{Q}_p(1))\simeq \mathbb{Q}_p
\end{equation*}
and we get a commutative diagram
\begin{equation*}
\begin{tikzcd}
H_f^1(G_T,\mathbb{Q}_p(1)) \arrow[r, hook, "\oplus_{q\in T}\mathrm{loc}_q"] \arrow[d, "g=\log\chi\cup\,\cdot"]
& H_f^1(G_p,\mathbb{Q}_p(1))\oplus\bigoplus_{q\in S} H^1(G_q,\mathbb{Q}_p(1)) \isoarrow{d} \\
H^2(G_T,\mathbb{Q}_p(1)) \arrow[r, hook, "\oplus_{q\in T}\mathrm{loc}_q"]
& \bigoplus_{q\in T} H^2(G_q,\mathbb{Q}_p(1))\simeq \bigoplus_{q\in T}\mathbb{Q}_p.
\end{tikzcd}
\end{equation*}
On the other hand, by global class field theory and Hilbert's Theorem 90, the image of $H^2(G_T,\mathbb{Q}_p(1))$ in the bottom row is the kernel of the map
\begin{equation*}
\bigoplus_{q\in T}\mathbb{Q}_p\rightarrow \mathbb{Q}_p,\qquad (a_q)\to \sum_q a_q
\end{equation*}
and by dimension considerations, one concludes that the map $g$ is in fact also an isomorphism. 

From above we know that the image of $\bigoplus_{q\in S}j_q(\mathcal{X}(\mathbb{Z}_q))$ in $\bigoplus_{q\in S}\mathbb{Q}_p$ is precisely $(1/2)\bigoplus_{q\in S} W_q$, where
\begin{itemize}
\item if $[E(\mathbb{Q}_q):E_0(\mathbb{Q}_q)]=1$, then $W_q=\{0\}$ 
\item if $[E(\mathbb{Q}_q):E_0(\mathbb{Q}_q)]\neq 1$, then
\begin{equation*}
W_q=\begin{cases}
W_q^{\mathrm{bad}} & \text{if}\ q=2\ \text{and}\ E\ \text{is split multiplicative at}\ q,\\
W_q^{\mathrm{bad}} \cup\{0\} & \text{otherwise}
\end{cases}
\end{equation*}
and $W_q^{\mathrm{bad}}$ is defined in Proposition \ref{localheightsnontrivialtamagawa}.
\end{itemize}
Let $W=\prod_{q\in S}W_q$. It follows from the above that for every $w = (w_q)_{q\in S}\in W$ there exists a unique $c\in H_f^1(G_T,\Q_p(1))$ with $2(\log \chi \cup \mathrm{loc}_q(c))= w_q$ for every $q\in S$. Furthermore, this satisfies $2\psi_p(\mathrm{loc}_p(c))=-||w||$. 
On the other hand, if $q\not\in T$ and $c\in H_f^1(G_T,\mathbb{Q}_p(1))$ then $\mathrm{loc}_q(c)=0$ and $\mathrm{Im}(j_q)=\{0\}$ by Proposition \ref{propositionheightstamagawa1}.
Therefore,
\begin{equation*}
\mathrm{Sel}^2(\mathcal{X})=\bigcap_{q\in S} \mathrm{loc}_q^{-1}(\mathrm{Im}j_q)
\end{equation*}
and
\begin{equation*}
\mathrm{loc}_p(\mathrm{Sel}^2(\mathcal{X}))=\bigcup_{w\in W}\{c\in H^1_f(G_p,\mathbb{Q}_p(1)) : 2\psi_p(c)+||w|| =0\}.
\end{equation*}
It remains to compute the preimage of this set under $j_p$. As is shown in \cite[\S 5.7]{nonabelianconjecture}, we find
\begin{equation*}
\mathcal{X}(\mathbb{Z}_p)_2= \{z \in \mathcal{X}(\mathbb{Z}_p):  \Log(z)  =0, 2D_2(z) + ||w||=0 \};
\end{equation*}
indeed, the condition $\Log(z)=0$ is equivalent to requiring that $$j_p(z)\in H_f^1(G_p,\mathbb{Q}_p(1))\subset H_f^1(G_p,U_2)$$
and the other condition comes from the explicit formula
\begin{equation*}
\psi_p(j_p(z))=D_2(z)\quad \text{for}\ j_p(z)\in H_f^1(G_p,\mathbb{Q}_p(1)).\qedhere
\end{equation*}
\end{proof}

\begin{rmk}
\label{rmk:corrections}
The corrections to the proof in \cite{nonabelianconjecture} made here are the following. First of all, if $\mathcal{X}(\mathbb{Z}_q)$ is empty for some $q$, the proof does not hold. Of course this is a trivial case (treated in (\ref{trivial})), but it is not clear that the union given in Theorem 1.12 of \emph{loc.\ cit.} should be empty. In fact, in Example \ref{Example:2} we find a curve satisfying the hypotheses of Theorem \ref{level2rank0}\thinspace (\ref{trivial}), but for which $\bigcup_{w\in W}\phi(w)\neq \emptyset$.

Secondly, if the reduction type at $q$ is non-split multiplicative of type $\mathrm{I}_m$, with $m>2$, not all the values in their sets $W_q$ will be attained by a point in $E(\mathbb{Q}_q)\setminus E_0(\mathbb{Q}_q)$. Therefore, if a prime in $S$ is non-split multiplicative, their statement should just be an inclusion of $\mathcal{X}(\mathbb{Z}_p)_2$ into the union of the $\Psi(w)$. One should note, however, that it seems like this was taken care of in the computations when the Tamagawa number at $q$ is $1$, but not when it is $2$ (and hence $m$ is even).

For the same reasons, if $q=2$ is a prime of split multiplicative reduction of type $\mathrm{I}_m$, with $m>0$, the element $0$ should not be included in $W_q$.

We remark that in all the examples they provided the set they computed turned out to be equal to $\mathcal{X}(\mathbb{Z})$ and hence to $\mathcal{X}(\mathbb{Z}_p)_2$.
\end{rmk}

\begin{rmk}
\label{rmk:globaltorsion}
The proof of \cite[Theorem 1.12]{nonabelianconjecture} is rather technical. However, for an elliptic curve of any rank, denoting by $\mathcal{X}(\mathbb{Z})_{\mathrm{tors}}$ the set of points of $\mathcal{X}(\mathbb{Z})$ of finite order, the easier statement
\begin{equation}
\label{eq:XZtors}
\mathcal{X}(\mathbb{Z})_{\mathrm{tors}}\subseteq \bigcup_{w\in W}\{z \in \mathcal{X}(\mathbb{Z}_p):  \Log(z)  =0, 2D_2(z) + ||w||=0 \}
\end{equation}
is elementary to prove. Indeed, the condition $\Log(z)=0$ cuts out the torsion points in $\mathcal{X}(\mathbb{Z}_p)$. On the other hand, let $$\gamma =\begin{cases} C &\ \text{if}\ E\ \text{is ordinary at }p,\\
\frac{a_1^2+4a_2}{12} &\ \text{otherwise.}
\end{cases}$$
Then we have
\begin{align*}
\begin{cases}
\Log(z)  =0,\\
2D_2(z) + ||w||=0
\end{cases} &\iff\quad \begin{cases}
\Log(z) =0\\
2D_2(z)+\gamma\Log(z)^2+ ||w||=0
\end{cases}\\
&\iff \quad\begin{cases}
\Log(z) =0\\
\lambda_p(z)+ ||w||=0
\end{cases}
\end{align*}
and, for $z\in \mathcal{X}(\mathbb{Z})$, $h_p(z)=\lambda_p(z)+ ||w||$ for some $w\in W$, where $h_p$ and $\lambda_p$ are the global and local $p$-adic heights of \S \ref{padicheight}. In particular, if $z\in\mathcal{X}(\mathbb{Z})_{\mathrm{tors}}$, we have $h_p(z)=0$. In fact, we could have also obtained a height function by setting the local height at $p$ to be the dilogarithm $2D_2(z)$. 
\end{rmk}

\begin{proof}[Proof of Theorem \ref{level2rank1}]
The proof of part (\ref{trivialrank1}) is identical to the proof of Theorem \ref{level2rank0}\thinspace (\ref{trivial}). The proof of part (\ref{nontrivialrank1}) is straightforward from \S \ref{sec:proofs} and the proof of \cite[Proposition 5.12]{nonabelianconjecture}: the idea is that any two quadratic functions on the rank-one $E(\mathbb{Q})$ must be linearly dependent. Note that in the semistable case our statement is slightly different, as our set $W$ is smaller if there are primes of non-split multiplicative reduction of type $\mathrm{I}_{m}$, with $m>2$ and also if $q=2$ is a prime of split multiplicative reduction (cf.\ Remark \ref{rmk:corrections}).
\end{proof}

\begin{rmk}
\label{rmk:generalisedheights}
Theorem \ref{level2rank1} is a consequence of the quadraticity of the $p$-adic height and of the square of the elliptic curve logarithm. Of course, that the latter function is quadratic follows from the linearity of the logarithm. We remark that $\Log^2$ is in fact the $p$-adic height attached to the basis element $\omega$ of the Dieudonn\'e module of $E$, in the language of generalised $p$-adic heights (see for instance \cite[\S 4]{steinwuth}), whereas the $p$-adic height $h_p$ of Mazur--Tate is the one attached to an eigenvector with unit eigenvalue under the action of Frobenius. We could remove the assumption that $p$ is ordinary in the statement of Theorem \ref{level2rank1} if we replaced $C$ with $\frac{a_1^2+4a_2}{12}$ and let $h_p$ be the global $p$-adic height that we defined in \S \ref{padicheight} when $p$ is supersingular.
\end{rmk}

\section{Obstructions to $\mathcal{X}(\mathbb{Z})=\mathcal{X}(\mathbb{Z}_p)_2$ in rank $0$.}
\label{sec:obstructions}
We now derive some criteria for $\mathcal{X}(\mathbb{Z}_p)_2\supsetneq \mathcal{X}(\mathbb{Z})$. In Section \ref{sec:Computations}, we will compute $\mathcal{X}(\mathbb{Z}_p)_2$ for several curves and provide explicit examples for the results of this section.
Since a necessary condition for $z\in\mathcal{X}(\mathbb{Z}_p)_2$ is that $\Log(z)=0$, which can only occur if $z\in \mathcal{X}(\mathbb{Z}_p)_{\mathrm{tors}}$, after having fixed all appropriate embeddings, we must have $$\mathcal{X}(\mathbb{Z}_p)_2\subset\mathcal{E}(\overline{\mathbb{Z}})_{\mathrm{tors}}=E(\overline{\mathbb{Q}})_{\mathrm{tors}}.$$

In \S \ref{sec:quad_sat} we derive a stronger necessary condition, which roughly says that if a point lies in $\mathcal{X}(\Z_p)_2$, then its local heights cannot distinguish it from a point defined over $\Q$. To motivate the intuition behind this, it is more natural to first investigate sufficient conditions. In particular, we consider two reasons why extra points could arise in $\mathcal{X}(\mathbb{Z}_p)_2$: invariance of local heights under automorphism (\S\ref{sec:automorphisms}) and existence of non-cyclotomic local heights over certain number fields (\S\ref{sec:other_characters}). Sometimes, a combination of the two is needed, as is the case in Proposition \ref{prop:twists_X049}, which provides us with infinitely many curves over $\mathbb{Q}$ with points over a quartic field appearing in $\mathcal{X}(\mathbb{Z}_p)_2$ for suitable choices of $p$. In \S \ref{sec:proof_of_nonequality}, we use this to deduce Theorem \ref{thm:nonequality}.

We start by proving an elementary fact: any obstruction to $\mathcal{X}(\mathbb{Z}_p)_2 = \mathcal{X}(\mathbb{Z})$ must come from points defined over number fields larger than $\mathbb{Q}$.
\begin{prop}
\label{prop:integralequalrational}
Suppose that $E$ satisfies the assumptions of Theorem \ref{level2rank0}\thinspace (\ref{nontrivial}) and that $p$ is an odd prime of good reduction. Then 
\begin{equation*}
\mathcal{X}(\mathbb{Z}_p)_2\cap \mathcal{E}(\mathbb{Z}) = \mathcal{X}(\mathbb{Z}).
\end{equation*}
\end{prop}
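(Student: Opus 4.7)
The plan is to establish the equality as a pair of inclusions, both of which turn out to be essentially formal once combined with Remark \ref{rmk:globaltorsion}. The bookkeeping ingredient is the open immersion $\mathcal{X}=\mathcal{E}\setminus\{O\}\hookrightarrow \mathcal{E}$, which for any $\mathbb{Z}$-algebra $R$ gives the identification $\mathcal{X}(R)=\mathcal{E}(R)\setminus\{O\}$, where $O$ denotes the evaluation of the zero section on $R$. In particular $\mathcal{X}(\mathbb{Z})=\mathcal{E}(\mathbb{Z})\setminus\{O\}$ and $\mathcal{X}(\mathbb{Z}_p)=\mathcal{E}(\mathbb{Z}_p)\setminus\{O\}$.

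For the inclusion $\mathcal{X}(\mathbb{Z}_p)_2\cap \mathcal{E}(\mathbb{Z})\subseteq \mathcal{X}(\mathbb{Z})$, I would take $z\in \mathcal{X}(\mathbb{Z}_p)_2\cap \mathcal{E}(\mathbb{Z})$ and note that $\mathcal{X}(\mathbb{Z}_p)_2\subseteq \mathcal{X}(\mathbb{Z}_p)=\mathcal{E}(\mathbb{Z}_p)\setminus\{O\}$ forces the image of $z$ in $\mathcal{E}(\mathbb{Z}_p)$ to differ from $O$. Since $\mathcal{E}$ is separated over $\mathbb{Z}$, the natural map $\mathcal{E}(\mathbb{Z})\to \mathcal{E}(\mathbb{Z}_p)$ is injective, so $z$ already differs from $O$ as a $\mathbb{Z}$-point, and therefore $z\in \mathcal{E}(\mathbb{Z})\setminus\{O\}=\mathcal{X}(\mathbb{Z})$.

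For the reverse inclusion $\mathcal{X}(\mathbb{Z})\subseteq \mathcal{X}(\mathbb{Z}_p)_2\cap \mathcal{E}(\mathbb{Z})$, the containment in $\mathcal{E}(\mathbb{Z})$ is clear, so only the Kim inclusion $\mathcal{X}(\mathbb{Z})\subseteq \mathcal{X}(\mathbb{Z}_p)_2$ remains to be verified. I would argue it concretely as in Remark \ref{rmk:globaltorsion}. Every $z\in \mathcal{X}(\mathbb{Z})$ is torsion, because the rank-$0$ assumption forces $E(\mathbb{Q})=E(\mathbb{Q})_{\mathrm{tors}}$, so $\Log(z)=0$ and $h_p(z)=0$. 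Using $\Log(z)=0$ in the description of $\lambda_p$ recalled in \S\ref{padicheight} one has $\lambda_p(z)=2D_2(z)$, and the decomposition $h_p(z)=\lambda_p(z)+\sum_{q\in S}\lambda_q(z)$ yields $2D_2(z)+||w||=0$ with $w=(\lambda_q(z))_{q\in S}$. Propositions \ref{propositionheightstamagawa1} and \ref{localheightsnontrivialtamagawa} ensure that $\lambda_q(z)\in W_q$ for each $q\in S$, so $w\in W$ and $z\in \phi(w)\subseteq \mathcal{X}(\mathbb{Z}_p)_2$.

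The argument has no genuine obstacle; the content of the proposition is conceptual rather than technical. It isolates the set in which parasite points of $\mathcal{X}(\mathbb{Z}_p)_2$ can a priori live: any $z\in \mathcal{X}(\mathbb{Z}_p)_2\setminus \mathcal{X}(\mathbb{Z})$ must fail to descend to $\mathcal{E}(\mathbb{Z})$, and therefore the search for obstructions to sharpness of $\mathcal{X}(\mathbb{Z}_p)_2$ naturally shifts to torsion points defined over proper extensions of $\mathbb{Q}$, which is precisely the line of investigation pursued in \S\ref{sec:automorphisms} and the subsections that follow.
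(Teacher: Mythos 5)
Your proof of the reverse inclusion $\mathcal{X}(\mathbb{Z})\subseteq \mathcal{X}(\mathbb{Z}_p)_2\cap\mathcal{E}(\mathbb{Z})$ via Remark \ref{rmk:globaltorsion} is fine, but the forward inclusion rests on a false identification and this is exactly where the content of the proposition lies. It is not true that $\mathcal{X}(R)=\mathcal{E}(R)\setminus\{O\}$ for $R=\mathbb{Z}$ or $R=\mathbb{Z}_q$: the scheme $\mathcal{X}$ is the complement of the \emph{zero section} (a closed subscheme of $\mathcal{E}$ flat over $\mathbb{Z}$), so a section $\mathrm{Spec}\,R\to\mathcal{E}$ factors through $\mathcal{X}$ only if it is disjoint from the zero section over every point of $\mathrm{Spec}\,R$, i.e.\ only if the point does not reduce to $O$ modulo any prime of $R$. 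Thus $\mathcal{X}(\mathbb{Z})$ is the set of \emph{integral} points of the minimal model, a proper subset of $\mathcal{E}(\mathbb{Z})\setminus\{O\}=E(\mathbb{Q})\setminus\{O\}$ in general; were your identification correct, the distinction between integral and rational points around which the whole paper is organised would collapse. Consequently, a point $z\in\mathcal{X}(\mathbb{Z}_p)_2\cap\mathcal{E}(\mathbb{Z})$ is a rational point that is integral at $p$, but could a priori reduce to $O$ at some other prime $q$ (for torsion points this can genuinely happen at $q=2$ for $2$-torsion), and ruling this out is the entire point.

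The argument the paper gives, and which your write-up is missing, runs as follows. Since $\Log(z)=0$, $z$ is torsion and $h_p(z)=0$; membership in $\mathcal{X}(\mathbb{Z}_p)_2$ gives $\lambda_p(z)=-\lVert w\rVert$ for some $w\in W$, whence $\sum_{q\neq p}\lambda_q(z)=\lVert w\rVert$. Writing both sides as $\sum_q\alpha_q\log q$ and $\sum_q\beta_q\log q$ with $\beta_q\le 0$, the fact that the kernel of the chosen branch of the $p$-adic logarithm is generated by $p$ and the roots of unity forces $\alpha_q=\beta_q$ for every $q$. If $z$ failed to be integral at some $q$, Lemma \ref{lemmalocalheights} would give $\lambda_q(z)=\log\max\{1,|x(z)|_q\}$ with $\alpha_q>0$, contradicting $\alpha_q=\beta_q\le 0$. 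So $z$ is integral at every prime and lies in $\mathcal{X}(\mathbb{Z})$. Your observation that separatedness makes $\mathcal{E}(\mathbb{Z})\to\mathcal{E}(\mathbb{Z}_p)$ injective is true but does not address this issue.
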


\begin{proof}
Suppose $P\in \mathcal{X}(\mathbb{Z}_p)_2\cap\mathcal{E}(\mathbb{Z})$. In particular, $\Log(P)=0$, so $P$ is torsion and hence $h_p(P)=0$. On the other hand, since $P\in \mathcal{X}(\mathbb{Z}_p)_2$,
\begin{equation*}
\lambda_p(P)+||w||=0
\end{equation*}
for some $w\in W$. Therefore,
\begin{equation*}
\sum_{q\neq p}\lambda_q(P)=||w||.
\end{equation*}
By definition, we have
\begin{equation*}
\sum_{q\neq p}\lambda_q(z) = \sum_{q\neq p}\alpha_q\log {q},\qquad ||w||=\sum_{q\neq p}\beta_q\log{q}
\end{equation*}
for some $\alpha_q,\beta_q\in\mathbb{Q}$, $\alpha_q = 0$ for all but finitely many $q$, $\beta_q=0$ for all $q\not\in S$ and $\beta_q\leq 0$ for all $q$. Thus
\begin{equation*}
\log\biggl(\prod_{q\neq p} q^{d(\alpha_q-\beta_q)}\biggr) =0, 
\end{equation*}
for some non-zero integer $d$ such that $d(\alpha_q-\beta_q)\in\mathbb{Z}$ for all $q$.
This implies that $\alpha_q = \beta_q$ for all $q$, since the kernel of the $p$-adic logarithm is the subgroup of $\mathbb{Q}_p^{\times}$ generated by $p$ and by the roots of unity. Suppose that $z$ is not integral at $q$. Then by Lemma \ref{lemmalocalheights}\thinspace\ref{goodreduction}, $\alpha_q>0$, but $\beta_q \leq 0$, a contradiction.
\end{proof}

\begin{rmk}
According to \cite[VII Application 3.5]{silvermanAEC}, if $P\in\mathcal{E}(\mathbb{Z})_{\mathrm{tors}}$ then $P$ is integral at all primes except possibly at $2$ if $P$ is $2$-torsion.  Thus the only $q$ for which the proof of Proposition \ref{prop:integralequalrational} is non-empty is $q =2$. However, note that, with minor changes, the same proof shows the perhaps less trivial fact that $\mathcal{X}(\Z_p)_2^{\prime}\cap \mathcal{E}(\Z)=\mathcal{X}(\Z)$ in rank $1$. 
\end{rmk}

\begin{rmk}
\label{remarkonminimalmodels}
Unlike in \S \ref{padicheight}, given a prime $v$ of a number field, henceforth the notation $\lambda_{v}$ will be used for the local height at $v$ computed with respect to the model $\mathcal{E}$, which may not be minimal at $v$. The translation with the values computed with respect to a minimal model (Lemma \ref{lemmalocalheights} and Proposition \ref{localheightsnontrivialtamagawa}) is given by (\ref{lambdaqlambdaqmin}).
\end{rmk}
\subsection{Automorphisms}
\label{sec:automorphisms}
Recall that local heights are even functions. Therefore, if $K$ is a quadratic field with $\mathrm{Gal}(K/\Q)=\braket{\tau}$ and $z\in \mathcal{X}(\OO_K)$ satisfies $\tau(z)= -z$, then
\begin{equation*}
h_p(z)= \sum_{q}\lambda_{\mathfrak{q}}(z) = \lambda_{\fp}(z) + \sum_{q\in S}\lambda_{\fq}(z),
\end{equation*}
where $\fq$ (resp.\ $\fp$) is any prime of $K$ above $q$ (resp.\ $p$). Intuitively, in terms of local $p$-adic heights, the point $z$ behaves as if it were defined over $\mathbb{Q}$; if furthermore $z$ is a torsion point and $p$ is split in $K$, then $z$ will give rise to a point in $\mathcal{X}(\Z_p)_2$, provided that $\lambda_{\fq}(z)\in W_q$ at every $q\in S$. If the $j$-invariant of $E$ is different from $0$ and $1728$, the automorphism group of $E/\overline{\Q}$ is generated by $z\mapsto -z$. On the other hand, if $j(E)\in \{0,1728\}$, we can use the invariance of our local heights under any automorphism (cf.\ property \ref{property:invarianceautoawayp} in \S \ref{sec:heightsawayfromp} and property \ref{prop:transformationunderauto} in \S  \ref{sec:heightsabovep}) to generalise the above example as follows.
\begin{prop}
\label{automorphismeffect}
Suppose that $E$ satisfies the assumptions of Theorem \ref{level2rank0} and that $p$ is an odd prime of good reduction. Let $K$ be a Galois extension of $\mathbb{Q}$, such that there is an embedding $\rho\colon K\hookrightarrow \mathbb{Q}_p$. Extend $\rho$ to a map $\mathcal{E}(\OO_K)\hookrightarrow \mathcal{E}(\mathbb{Z}_p)$.  Let $z\in\mathcal{X}(\OO_K)_{\mathrm{tors}}$ and suppose that for every $\tau\in\mathrm{Gal}(K/\mathbb{Q})$ there exists $\psi_{\tau}\in \mathrm{Aut}(E/\overline{\mathbb{Q}})$ such that $\tau(z)=\psi_{\tau}(z)$.
\begin{enumerate}
\item \label{firststatement} For each rational prime $q$, let $\mathfrak{q}$ be one (any) prime of $K$ above $q$ and let $\lambda_{\mathfrak{q}}$ be the local height at $\mathfrak{q}$ with respect to the model $\mathcal{E}$. If 
\begin{equation*}
\sum_{q\in S}\lambda_{\mathfrak{q}}(z) = ||w||
\end{equation*}
for some $w\in W$, then $\rho(z)\in\mathcal{X}(\mathbb{Z}_p)_2$.
\item \label{secondstatement} In particular, if $z = \psi^{\prime}(P)$ for some $\psi^{\prime}\in \mathrm{Aut}(E/\overline{\mathbb{Q}})$ and some $P\in\mathcal{X}(\mathbb{Z})$, then $\rho(z)\in\mathcal{X}(\mathbb{Z}_p)_2$.
\end{enumerate}
\end{prop}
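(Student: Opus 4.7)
The plan is to verify, for part (\ref{firststatement}), the two conditions from Theorem \ref{level2rank0}\thinspace (\ref{nontrivial}): that $\Log(\rho(z))=0$ and $2D_2(\rho(z))+||w||=0$. The first is immediate because $z\in\mathcal{X}(\OO_K)_{\mathrm{tors}}$ forces $\rho(z)\in \mathcal{X}(\mathbb{Z}_p)_{\mathrm{tors}}$, and the abelian logarithm vanishes on torsion. For the second, I will exploit the formula from \S \ref{sec:heightsabovep} which, specialised to $F=\mathbb{Q}$ and $v=p$, reads $\lambda_p(Q) = 2D_2(Q)+\gamma\Log(Q)^2$; combined with $\Log(\rho(z))=0$, this reduces the task to showing $\lambda_p(\rho(z)) = -||w||$.

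To prove $\lambda_p(\rho(z))=-||w||$, I will start from the vanishing of the global $p$-adic height $h_p(z)=0$ (because $z$ is torsion), which expands as $\frac{1}{[K:\Q]}\sum_v n_v\lambda_v(z)=0$ over the finite primes $v$ of $K$. The key observation is that, under the hypothesis of the proposition, $\lambda_{\mathfrak{q}}(z)$ depends only on the rational prime $q$ lying below $\mathfrak{q}$. Indeed, given two such primes $\mathfrak{q},\mathfrak{q}'$, Galois-ness of $K/\Q$ gives $\tau\in\Gal(K/\mathbb{Q})$ with $\tau(\mathfrak{q})=\mathfrak{q}'$; Galois equivariance of local heights yields $\lambda_{\mathfrak{q}'}(\tau(z))=\lambda_{\mathfrak{q}}(z)$, and the hypothesis $\tau(z)=\psi_{\tau}(z)$ combined with the automorphism invariance of local heights (property \ref{property:invarianceautoawayp} of \S \ref{sec:heightsawayfromp} and property \ref{prop:transformationunderauto} of \S \ref{sec:heightsabovep}) gives $\lambda_{\mathfrak{q}'}(\psi_\tau(z))=\lambda_{\mathfrak{q}'}(z)$. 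Since $\sum_{\mathfrak{q}\mid q} n_{\mathfrak{q}}=[K:\mathbb{Q}]$, the global sum then collapses to $\sum_q\lambda_\mathfrak{q}(z) = 0$. For $q\notin S\cup\{p\}$, integrality of $z$ together with Lemma \ref{lemmalocalheights}\thinspace\ref{goodreduction} force $\lambda_{\mathfrak{q}}(z)=0$, so only primes in $S\cup\{p\}$ contribute. Invoking the standing hypothesis $\sum_{q\in S}\lambda_{\mathfrak{q}}(z)=||w||$ now yields $\lambda_{\mathfrak{p}}(z)=-||w||$, and $\lambda_{\mathfrak{p}}(z)=\lambda_p(\rho(z))$ via the compatibility of local heights under base change (here $K_{\mathfrak{p}}\cong \mathbb{Q}_p$ through the extension of $\rho$ by continuity), closing part (\ref{firststatement}).

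For part (\ref{secondstatement}), it is enough to verify that $z=\psi'(P)$ satisfies the hypotheses of (\ref{firststatement}). Rank zero of $E$ ensures $P$, and hence $z$, is torsion. For $\tau\in\Gal(K/\mathbb{Q})$, the $\mathbb{Q}$-rationality of $P$ yields $\tau(z)=\tau(\psi')(P)=\bigl(\tau(\psi')\circ(\psi')^{-1}\bigr)(z)$, so taking $\psi_\tau \colonequals \tau(\psi')\circ(\psi')^{-1}\in\mathrm{Aut}(E/\overline{\mathbb{Q}})$ works. The remaining identity $\sum_{q\in S}\lambda_{\mathfrak{q}}(z)=||w||$ for some $w\in W$ follows from automorphism invariance applied to $\psi'$: $\lambda_{\mathfrak{q}}(z)=\lambda_{\mathfrak{q}}(P)=\lambda_q(P)$, and Propositions \ref{propositionheightstamagawa1} and \ref{localheightsnontrivialtamagawa} supply $\lambda_q(P)\in W_q$ for each $q\in S$, so $w=(\lambda_q(P))_{q\in S}\in W$ does the job.

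The main subtlety will be the Galois equivariance step together with the application of automorphism invariance when $\psi_{\tau}$ (or $\psi'$, in part (\ref{secondstatement})) fails to be defined over the relevant local completion; I will handle this by base changing to a finite extension that contains the field of definition of the automorphism, invoking the final remark of \S \ref{sec:heightsabovep} (and its analogue away from $p$) that local heights are preserved under finite extensions of the local field.
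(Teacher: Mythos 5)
Your proposal is correct and follows essentially the same route as the paper's proof: vanishing of $h_p$ and $\Log$ on torsion, collapse of the sum of local heights via Galois transitivity on primes above $q$ combined with automorphism-invariance of local heights, and the identification $\lambda_p(\rho(z))=2D_2(\rho(z))+\gamma\Log(\rho(z))^2$ to match the defining equation of $\phi(w)$. Your extra explicit checks (the cocycle $\psi_\tau=\tau(\psi')\circ(\psi')^{-1}$ in part (2), vanishing at good primes, and base-changing the local field to accommodate the field of definition of the automorphism) are details the paper leaves implicit but are handled correctly.
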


\begin{proof}
The assumption that $z$ is a torsion point implies that $h_p(z)=0$ and $\Log(z)=0$.
Since for $\tau\in\mathrm{Gal}(K/\mathbb{Q})$ there is an automorphism $\psi_{\tau}$ of $E$ which acts on $z$ in the same way as $\tau$ and local heights are invariant under automorphisms, for each prime $\mathfrak{q}$ of $K$ we have
\begin{equation*}
\lambda_{\mathfrak{q}}(z)=\lambda_{\mathfrak{q}}(\psi_{\tau}(z))= \lambda_{\mathfrak{q}}(\tau(z))=\lambda_{\tau^{-1}(\mathfrak{q})}(z).
\end{equation*}
Therefore,
\begin{equation*}
0 = [K\colon \mathbb{Q}]h_p(z) = [K\colon \mathbb{Q}]\bigg(\lambda_p(\rho(z))+\sum_{q\in S}\lambda_{\mathfrak{q}}(z)\bigg)
\end{equation*}
and (\ref{firststatement}) follows. For (\ref{secondstatement}), since $z = \psi^{\prime}(P)$, we have, similarly to above,
\begin{equation*}
\lambda_{\mathfrak{q}}(z) = \lambda_{\mathfrak{q}}(\psi^{\prime}(P)) = \lambda_q(P).
\end{equation*}
In particular, the hypothesis of (\ref{firststatement}) is satisfied.
\end{proof}
We now list a few consequences of Proposition \ref{automorphismeffect}. See Section \ref{sec:Computations} for explicit examples.

\begin{cor}
\label{cor:j0automorphism}
Suppose that $E$ satisfies the assumptions of Theorem \ref{level2rank0} and that $p$ is an odd prime of good ordinary reduction.
Suppose that 
\begin{equation*}
\mathcal{E}\colon y^2+a_3y = x^3 + a_6,\quad \text{for some } a_6\in\mathbb{Z}\ \text{and } a_3\in\{0,1\},
\end{equation*}
and that there exists $y_0\in\mathbb{Z}$ such that $a_6-y_0^2-a_3y_0$ is a cube in $\mathbb{Z}$ and the points over $\overline{\mathbb{Q}}$ with $y$-coordinate equal to $y_0$ have finite order. Then $s(x)=x^3+a_6-y_0^2-a_3y_0$ splits completely in $\mathbb{Q}_p$ and for each root $\alpha\in\mathbb{Z}_p$ of $s(x)$, $\pm(\alpha, y_0)\in \mathcal{X}(\mathbb{Z}_p)_2$.
\end{cor}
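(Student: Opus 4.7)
The plan is to reduce the statement to a direct application of Proposition \ref{automorphismeffect}(\ref{secondstatement}). The two key observations are that the equation $y^2+a_3y = x^3+a_6$ defines a curve of $j$-invariant $0$, hence carries non-trivial cubic automorphisms, and that the ordinary hypothesis forces $\mathbb{Q}_p$ to contain a primitive cube root of unity.

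First I would identify $\mathrm{Aut}(E/\overline{\mathbb{Q}})$ explicitly on the given model. Completing the square in the equation for $\mathcal{E}$ gives $Y^2=X^3+(a_6+a_3^2/4)$, whose geometric automorphism group is generated by $(X,Y)\mapsto(\zeta_3 X, Y)$ and $(X,Y)\mapsto(X,-Y)$. Unwinding the change of variables $Y=y+a_3/2$, the six automorphisms of $\mathcal{E}$ over $\overline{\mathbb{Q}}$ become $(x,y)\mapsto(\zeta_3^k x, y)$ and $(x,y)\mapsto(\zeta_3^k x,-y-a_3)$ for $k\in\{0,1,2\}$.

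Next I would show that $s(x)$ splits completely over $\mathbb{Q}_p$. Writing $a_6-y_0^2-a_3y_0=c^3$ with $c\in\mathbb{Z}$ factors $s(x)=x^3+c^3=(x+c)(x+c\zeta_3)(x+c\zeta_3^2)$ in $\overline{\mathbb{Q}}[x]$, so it suffices to produce $\zeta_3\in\mathbb{Q}_p$. A direct computation of the standard invariants gives $\Delta(\mathcal{E})=-27(a_3^2+4a_6)^2$, always divisible by $3$, so $3$ is a prime of bad reduction and thus $p\geq 5$. Since $E$ has CM by an order in $\mathbb{Z}[\zeta_3]$, Deuring's criterion says that good ordinary reduction at $p$ forces $p$ to split in $\mathbb{Q}(\zeta_3)$, i.e.\ $p\equiv 1\pmod 3$; then $\zeta_3\in\mathbb{F}_p$ lifts by Hensel's lemma to $\mathbb{Z}_p$. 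In particular every root $-c\zeta_3^k$ of $s$ lies in $\mathbb{Z}_p$.

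Finally I would conclude via Proposition \ref{automorphismeffect}(\ref{secondstatement}). Take $P=(-c,y_0)\in\mathcal{X}(\mathbb{Z})$; the six points $\pm(-c\zeta_3^k,y_0)$ are precisely the images of $P$ under the six automorphisms listed above, so in particular each is of the form $\psi'(P)$ for some $\psi'\in\mathrm{Aut}(E/\overline{\mathbb{Q}})$. Each is defined over the Galois extension $K=\mathbb{Q}(\zeta_3)$, has integral coordinates, and is a torsion point by the standing hypothesis on the points with $y$-coordinate $y_0$; moreover the chosen Hensel lift furnishes an embedding $\rho\colon K\hookrightarrow \mathbb{Q}_p$. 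Proposition \ref{automorphismeffect}(\ref{secondstatement}) then places each $\rho$-image in $\mathcal{X}(\mathbb{Z}_p)_2$, which is exactly the claim, since every root of $s(x)$ in $\mathbb{Z}_p$ has the form $-c\zeta_3^k$. The step that requires the most care is the explicit translation of the automorphism group from the depressed model $Y^2=X^3+B$ back to $\mathcal{E}$: the sign and additive shift in $y$ must be tracked to confirm that $\pm(\zeta_3^k\alpha_0,y_0)$ genuinely are automorphism images of $P$ rather than merely Galois conjugates, since Proposition \ref{automorphismeffect}(\ref{secondstatement}) trades a condition on Galois action (hypothesis of part (\ref{firststatement})) for the concrete assumption that the point is an automorphism image of a rational integral point.
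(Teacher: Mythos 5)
Your proof is correct and follows essentially the same route as the paper's: identify the order-$6$ automorphism group $(x,y)\mapsto(\zeta_3^k x,y)$, $(x,y)\mapsto(\zeta_3^k x,-y-a_3)$, use Deuring's criterion to get $\zeta_3\in\mathbb{Q}_p$ so that $s(x)=x^3+c^3$ splits, and conclude by Proposition \ref{automorphismeffect}\thinspace(\ref{secondstatement}) applied to the integral torsion point $(-c,y_0)\in\mathcal{X}(\mathbb{Z})$. The only cosmetic differences are that you derive the automorphism group by completing the square and add a (redundant but harmless) discriminant computation to exclude $p=3$, which Deuring's criterion already does.
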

\begin{rmk}
If in the corollary we have $a_3 = 1$, then $E(\Q)_{\tors}$ is isomorphic to a subgroup of $\Z/3\Z$, since $E(\Q)[2]=\{O\}$ and $E$ has good reduction at $2$ with $\#\overline{E}(\F_2)=3$. By looking at the $3$-rd division polynomial for $E$, it is then straightforward to check that Corollary \ref{cor:j0automorphism} applies non-trivially only if $4a_6 = -(27n^6+1)$, for some $n\in \mathbb{Z}$, $n\equiv 1\bmod{2}$. All such curves are isomorphic over $\Q$ to the elliptic curve \href{http://www.lmfdb.org/EllipticCurve/Q/27/a/3}{27.a3} \cite{lmfdb}. When $a_3 = 0$, there are infinitely many curves non-isomorphic over $\Q$ for which the corollary applies with $y_0 = 0$: see for example \S \ref{sec:proof_of_nonequality}. There is also at least one curve for which the Corollary applies to points of order $6$, namely \href{http://www.lmfdb.org/EllipticCurve/Q/36/a/4}{36.a4} (see Table \ref{table:2}).
\end{rmk}
\begin{proof}
Since $E$ has vanishing $j$-invariant, its automorphism group $\mathrm{Aut}(E/\overline{\mathbb{Q}})$ is a cyclic group of order $6$ generated by $\psi\colon E\to E$, $\psi(x,y)=(\zeta x,-y-a_3)$, for a primitive third root of unity $\zeta$.

Let $x_0\in\mathbb{Z}$ such that $x_0^3=y_0^2+a_3y_0-a_6$. We may assume that $y_0^2+a_3y_0-a_6$ is non-zero, as otherwise the statement of the corollary is trivial. Thus $s(x)$ has three distinct roots $x_0$, $\zeta x_0$ and $\zeta^2 x_0$ in $\overline{\mathbb{Z}}$.

Note also that, by Deuring's criterion \cite[Ch.\ 13, Theorem 12]{Lang:elliptic_functions}, the primes of good ordinary reduction for $E$ split completely in $\mathbb{Q}(\zeta)$, so $s(x)$ splits completely over $\mathbb{Q}_p$.
Successively applying $\psi$ to $(x_0,y_0)\in\mathcal{X}(\mathbb{Z})$ and localising at $p$ we obtain all points of the form $\pm (\alpha,y_0)$. The corollary then follows from Proposition \ref{automorphismeffect}\thinspace (\ref{secondstatement}).
\end{proof}
The following corollary to Proposition \ref{automorphismeffect} is a special case of the motivating example of the beginning of this subsection.
\begin{cor}
\label{cor:semistablecaseautomorphism}
Suppose that $E$ satisfies the assumptions of Theorem \ref{level2rank0} and that $p$ is an odd prime of good reduction. Let $K$ be a quadratic field, in which $p$ splits. Fix an embedding $\rho\colon K\hookrightarrow \mathbb{Q}_p$ and let $\tau$ be the non-trivial element in $\mathrm{Gal}(K/\mathbb{Q})$. Assume that no prime in $S$ ramifies in $K$ and that, if $q\in S$ is inert, then either $E$ has Kodaira symbol $\mathrm{I}_0^*$ at $q$ with Tamagawa number at least $2$ or $E$ has maximal Tamagawa number for its Kodaira symbol. Then
\begin{equation*}
\mathcal{X}(\mathbb{Z}_p)_2\supset \mathcal{X}(\mathbb{Z})\cup \{\rho(z)\in \mathcal{X}(\mathbb{Z}_p) : z\in\mathcal{X}(\OO_K)_{\mathrm{tors}}, \tau(z) = - z \}.
\end{equation*} 
\end{cor}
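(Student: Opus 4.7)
The plan is to apply Proposition \ref{automorphismeffect}\,(\ref{firststatement}): for the identity of $\mathrm{Gal}(K/\mathbb{Q})$ take $\psi_{\mathrm{id}} = \mathrm{id}$, and for the nontrivial element $\tau$ take $\psi_\tau = [-1]$, which is an automorphism for any elliptic curve and satisfies $\psi_\tau(z) = -z = \tau(z)$ by hypothesis. Since $z$ is assumed torsion, the only condition left to verify is that
\[
\sum_{q \in S} \lambda_{\mathfrak{q}}(z) = ||w|| \quad \text{for some } w \in W,
\]
which, as $W = \prod_{q \in S} W_q$, amounts to $\lambda_{\mathfrak{q}}(z) \in W_q$ separately at each $q \in S$, where $\mathfrak{q}$ is any prime of $K$ above $q$. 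The inclusion $\mathcal{X}(\mathbb{Z}) \subset \mathcal{X}(\mathbb{Z}_p)_2$ is immediate from Theorem \ref{level2rank0}\,(\ref{nontrivial}).

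I would then case-split on how $q$ behaves in $K$; the ramified case is excluded by assumption. If $q$ splits in $K$, then $K_{\mathfrak{q}} \simeq \mathbb{Q}_q$ canonically and $\mathcal{E}$ stays minimal at $\mathfrak{q}$, so $\lambda_{\mathfrak{q}}(z)$ is a value of $\lambda_q$ on a point of $\mathcal{X}(\mathbb{Z}_q)$, which by Propositions \ref{propositionheightstamagawa1} and \ref{localheightsnontrivialtamagawa} lies in $W_q$; independence of the chosen $\mathfrak{q}$ above $q$ follows because $\tau = [-1]$ on $z$ permutes the two primes but leaves $\lambda_\bullet(z)$ invariant.

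The core of the argument is the inert case. Here $K_{\mathfrak{q}}/\mathbb{Q}_q$ is unramified quadratic, with $n_{\mathfrak{q}} = 2$ and $q_{\mathfrak{q}} = q^2$, and the Kodaira symbol at $\mathfrak{q}$ equals the one at $q$; the Tamagawa number, however, may grow since more components of the N\'eron model can become rational over $\mathbb{F}_{q^2}$. Applying Proposition \ref{localheightsnontrivialtamagawa} at $\mathfrak{q}$ and using that the factor of $2$ in $\log q_{\mathfrak{q}} = 2\log q$ cancels against division by $n_{\mathfrak{q}} = 2$, one gets candidate values of $\lambda_{\mathfrak{q}}(z)$ directly comparable, in units of $\log q$, to $W_q$. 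The two clauses of the hypothesis are designed precisely to ensure these values lie in $W_q$: when the Tamagawa at $q$ is already maximal for its Kodaira symbol, the component group is fully $\mathbb{F}_q$-rational and so cannot grow under an unramified base change, making $W^{\mathrm{bad}}_{\mathfrak{q}}/n_{\mathfrak{q}}$ coincide with $W^{\mathrm{bad}}_q$; the $\mathrm{I}_0^*$ clause is verified by direct inspection of \textsc{Table} \ref{table:1}, which gives $W_q^{\mathrm{bad}} = \{-\log q\}$ for both Tamagawa $2$ and Tamagawa $4$, so a jump from $2$ to $4$ at $\mathfrak{q}$ causes no harm.

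The main obstacle is the case-by-case bookkeeping behind the previous paragraph: for each Kodaira symbol in \textsc{Table} \ref{table:1} one has to determine how the Tamagawa number can change under an unramified quadratic extension and check the numerical match. The exclusions built into the hypothesis---for instance non-split $\mathrm{I}_n$ with $n \geq 2$, or $\mathrm{I}_n^*$ with $n \geq 1$ and Tamagawa $2$---are exactly those situations in which the enlarged Tamagawa number over $K_{\mathfrak{q}}$ would otherwise produce values of $\lambda_{\mathfrak{q}}(z)$ outside $W_q$.
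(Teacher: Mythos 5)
Your proposal is correct and follows essentially the same route as the paper's proof: reduce to Proposition \ref{automorphismeffect}\thinspace(\ref{firststatement}) with $\psi_\tau=-\mathrm{id}$, handle split primes via the identification $K_{\mathfrak{q}}\simeq\mathbb{Q}_q$, and for inert primes use that the Kodaira symbol is preserved under the unramified extension together with the cancellation of $n_{\mathfrak{q}}=2$ against $\log q_{\mathfrak{q}}=2\log q$, with the two hypotheses ruling out exactly the cases where the Tamagawa number could grow and enlarge $W^{\mathrm{bad}}_{\mathfrak{q}}$ beyond $W_q$. The bookkeeping you flag as the main obstacle is precisely what the paper delegates to Table \ref{table:1} and the observation that the $\mathrm{I}_0^*$ entry is the same for Tamagawa numbers $2$ and $4$.
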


\begin{proof}
Since no prime in $S$ ramifies in $K/\mathbb{Q}$, Tate's algorithm \cite[IV, \S9]{silvermanadvancedtopics} shows that the equation for $\mathcal{E}$ defines a global minimal model for the base change $E/K$ and that the Kodaira symbol at $\mathfrak{q}\mid q$ is the same as the Kodaira symbol at $q$. The Tamagawa number does not change if $q$ is split in $K$; if $q$ is inert, by assumption the Tamagawa number is unvaried, except possibly if the Kodaira symbol is $\mathrm{I}_0^{*}$.  

If $q$ splits in $K$, fix a prime $\mathfrak{q}$ above it and an isomorphism $\rho_q\colon K_{\mathfrak{q}}\simeq \mathbb{Q}_q$.
Let $z\in\mathcal{X}(\OO_K)_{\tors}$ such that $\tau(z)=-z$.
With the notation as in Proposition \ref{automorphismeffect} and by Proposition \ref{localheightsnontrivialtamagawa}, we have
\begin{equation*}
\sum_{q\in S}\lambda_{\mathfrak{q}}(z)=\sum_{\substack{q\in S\\
q\ \text{split}}}\lambda_q(\rho_q(z))+\sum_{\substack{q\in S\\
q\ \text{inert}}}\lambda_{\mathfrak{q}}(z) = ||w||
\end{equation*}
for some $w\in W$. For the last step note that Proposition \ref{localheightsnontrivialtamagawa} gives the values of $2\lambda_{\mathfrak{q}}(z)$ for $\mathfrak{q}$ inert. However, the norm of $\mathfrak{q}$ is $q^2$. The corollary then follows from Proposition \ref{automorphismeffect}\thinspace (\ref{firststatement}) with $\psi=-\mathrm{id}\in \mathrm{Aut}(E/\overline{\mathbb{Q}})$.
\end{proof}

\begin{rmk}
\label{rmk:j1728}
Another source of quadratic points in $\mathcal{X}(\mathbb{Z}_p)_2$ comes from elliptic curves with $j$-invariant equal to $1728$.
Suppose that $E$ satisfies the assumptions of Theorem \ref{level2rank0}, that $p$ is an odd prime of good reduction and that
\begin{equation*}
\mathcal{E}\colon y^2 = x^3 + a_4x \quad \text{for some } a_4\in\mathbb{Z}, -a_4\not \in \mathbb{Z}^2.
\end{equation*}
Let $z\in\{ (\pm \sqrt{-a_4},0)\}$ and $K=\mathbb{Q}(\sqrt{-a_4})$ be its field of definition.  Let $\psi\in \mathrm{Aut}(E/\overline{\mathbb{Q}})$ be defined by $\psi(x,y)= (-x,iy)$. Then $\psi(z)= \tau(z)$, where $\mathrm{Gal}(K/\mathbb{Q})=\braket{\tau}$. Therefore, under suitable conditions on how the reduction types change in $K/\mathbb{Q}$ and on the splitting of $p$ in $K$, the localisations of the points $z$ appear in $\mathcal{X}(\mathbb{Z}_p)_2$.
\end{rmk}

The following corollary explains how points over biquadratic extensions can show up in $\mathcal{X}(\mathbb{Z}_p)_2$ when the $j$-invariant is zero. For ease of notation, we assume that the $a_3$-coefficient in the equation defining $\mathcal{E}$ is zero, but this assumption could be removed.
\begin{cor}
\label{cor:biquadraticpoints}
Suppose that $E$ satisfies the assumptions of Theorem \ref{level2rank0} and that $p$ is an odd prime of good ordinary reduction.
Suppose that 
\begin{equation*}
\mathcal{E}\colon y^2 = x^3 + a_6 \quad \text{for some } a_6\in\mathbb{Z}
\end{equation*}
and that there exists $x_0\in\mathbb{Z}$ such that the points over $\overline{\mathbb{Q}}$ with $x$-coordinate equal to $x_0$ have finite order. Assume that $p$ splits in $\mathbb{Q}(\sqrt{x_0^3+a_6})$. Let $K=\mathbb{Q}(\sqrt{-3}, \sqrt{x_0^3+a_6})$. For each rational prime $q$, let $\mathfrak{q}$ be one (any) prime of $K$ above $q$ and $\lambda_{\mathfrak{q}}$ the local height at $\mathfrak{q}$ with respect to the model $\mathcal{E}$. Let $\beta\in\mathbb{Z}_p$ be a root of $t(y) = y^2-x_0^3-a_6$. If
\begin{equation*}
\sum_{q\in S}\lambda_{\mathfrak{q}}(x_0,\beta) = ||w||
\end{equation*}
for some $w\in W$, then for each root $\alpha\in\mathbb{Z}_p$ of $s(x) = x^3-x_0^3$ and for each root $\beta\in\mathbb{Z}_p$ of $t(y) = y^2-x_0^3-a_6$, we have $(\alpha,\beta)\in \mathcal{X}(\mathbb{Z}_p)_2$.
\end{cor}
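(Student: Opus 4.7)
The plan is to reduce the statement, via Proposition \ref{automorphismeffect}\thinspace (\ref{firststatement}), to the hypothesis on the single point $P \colonequals (x_0, \beta)$. Since $j(E) = 0$, the group $\mathrm{Aut}(E/\overline{\mathbb{Q}})$ is cyclic of order $6$, generated by $\psi(x,y) = (\zeta_3 x, -y)$ for a primitive cube root of unity $\zeta_3$; assuming $x_0^3 + a_6 \neq 0$ (the degenerate case being essentially trivial), the orbit $\{\psi^i(P)\}_{0 \leq i \leq 5}$ is exactly the set of six points $(\zeta_3^j x_0, \pm \beta)$ with $j \in \{0, 1, 2\}$. Every such automorphism is defined over $\mathbb{Q}(\zeta_3) \subset K$.

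The first step is to verify that $p$ splits completely in $K$, which both secures an embedding $K \hookrightarrow \mathbb{Q}_p$ and realises every pair $(\alpha, \beta)$ appearing in the corollary as the image of some point in the orbit of $P$. By Deuring's criterion, as already invoked in the proof of Corollary \ref{cor:j0automorphism}, the good ordinary reduction assumption forces $p$ to split in $\mathbb{Q}(\zeta_3) = \mathbb{Q}(\sqrt{-3})$; combining this with the assumed splitting in $\mathbb{Q}(\sqrt{x_0^3+a_6})$, and using the elementary fact that for an odd prime unramified in a biquadratic extension, splitting in two of the three quadratic subfields forces it in the third, gives complete splitting in $K$.

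It then remains to apply Proposition \ref{automorphismeffect}\thinspace (\ref{firststatement}) to each $z \in \{\psi^i(P)\}$. The coordinates of $z$ are algebraic integers, and $z$ is torsion because it is the image of the torsion point $P$ under an automorphism, so $z \in \mathcal{X}(\mathcal{O}_K)_{\mathrm{tors}}$. For each $\tau \in \mathrm{Gal}(K/\mathbb{Q})$, a short calculation (using that $\tau$ permutes $\zeta_3, \zeta_3^2$ and $\beta,-\beta$) shows that $\tau(z)$ lies in the same $\psi$-orbit as $z$, so there exists $\psi_\tau \in \mathrm{Aut}(E/\overline{\mathbb{Q}})$ with $\tau(z) = \psi_\tau(z)$. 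Finally, invariance of local heights under automorphisms defined over the local field (property \ref{property:invarianceautoawayp} of \S\ref{sec:heightsawayfromp} for $q \neq p$, and property \ref{prop:transformationunderauto} of \S\ref{sec:heightsabovep} for $q = p$, which applies because $\zeta_3 \in K_\mathfrak{q}$ at every prime $\mathfrak{q}$) yields $\lambda_\mathfrak{q}(z) = \lambda_\mathfrak{q}(P)$ at every finite prime $\mathfrak{q}$, so the identity $\sum_{q \in S}\lambda_\mathfrak{q}(z) = ||w||$ follows from the hypothesis on $P$. The main non-trivial ingredient is Deuring's criterion securing the splitting of $p$ in $\mathbb{Q}(\zeta_3)$; the rest is bookkeeping over the six-point orbit, and I do not anticipate any genuinely hard step.
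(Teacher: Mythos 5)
Your proof is correct and follows essentially the same route as the paper: both reduce to Proposition \ref{automorphismeffect}\thinspace(\ref{firststatement}) by checking that $\mathrm{Gal}(K/\Q)$ acts on the point $(x_0,\beta)$ (equivalently, on its orbit under $\psi(x,y)=(\zeta_3 x,-y)$) through automorphisms of $E$, the paper doing this by an explicit two-case computation of $\sigma(P),\tau(P)$ while you phrase it as stability of the six-point $\psi$-orbit. Your additional verification that $p$ splits completely in $K$ (Deuring plus the biquadratic decomposition-group argument) is a point the paper leaves implicit, and is a welcome clarification rather than a deviation.
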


\begin{proof}
If $x_0^3+a_6$ is a square in $\mathbb{Z}$, the statement is precisely Corollary \ref{cor:j0automorphism}. Thus, we may assume that either
\begin{enumerate}[label = (\roman*)]
\item\label{casedeg4} $K$ has degree $4$ over $\mathbb{Q}$, or
\item\label{casedegree2sqrt3} $K = \mathbb{Q}(\sqrt{-3}) = \mathbb{Q}(\sqrt{x_0^3+a_6})$.
\end{enumerate}
Let $\zeta\in K$ be a primitive third root of unity. The automorphism group $\mathrm{Aut}(E/K)$ is generated by $\psi\colon E\to E$, $\psi(x,y)=(\zeta x,-y)$. In case \ref{casedeg4}, the Galois group of $K$ over $\mathbb{Q}$ is generated by two elements: $\sigma$, whose fixed field is $\mathbb{Q}(\sqrt{x_0^3+a_6})$ and $\tau$, whose fixed field is $\mathbb{Q}(\sqrt{-3})$. In case \ref{casedegree2sqrt3}, the Galois group is generated by $\sigma\colon \sqrt{-3}\mapsto -\sqrt{-3}$. Let $P=(a,b)$ where $a\in K$ is a root of $s(x)$ and $b\in K$ is a root of $t(y)$. Then in \ref{casedeg4}
\begin{equation*}
\sigma(P)\in\{P,-\psi(P),\psi^2(P)\},\qquad \tau(P) = -P.
\end{equation*}
Similarly, in case \ref{casedegree2sqrt3}, we have 
\begin{equation*}
\sigma(P)\in \{-P,\psi(P),-\psi^2(P)\}.
\end{equation*}
Therefore, we may apply Proposition \ref{automorphismeffect}\thinspace(\ref{firststatement}).
\end{proof}

\subsection{Non-cyclotomic $p$-adic heights}
\label{sec:other_characters}
The set $\mathcal{X}(\mathbb{Z}_p)_2$ is a finite set of $p$-adic points containing $\mathcal{X}(\mathbb{Z})$. After having fixed a choice of a subspace of $H_{\mathrm{dR}}^1(E/\mathbb{Q}_p)$ complementary to the space of holomorphic forms, there is only one Coleman--Gross global height pairing on $E(\mathbb{Q})$, up to multiplication by a constant. The definition of $\mathcal{X}(\mathbb{Z}_p)_2$ depends on this height function. Nevertheless, when analysing what points could arise in the set $\mathcal{X}(\mathbb{Z}_p)_2\setminus \mathcal{X}(\mathbb{Z})$, we should bear in mind that other global height functions may exist on $E(F)$, where $F$ is a number field, and that these also vanish on $E(F)_{\tors}$. In particular, suppose that 
there exists at least one embedding $\rho\colon F\hookrightarrow \mathbb{Q}_p$. 
It may happen that, for some $w\in W$ and some $Q\in\mathcal{X}(\OO_F)$,
\begin{equation*}
2D_2(\rho(Q))+\gamma \Log(\rho(Q))^2+||w|| = h_{p}^{\star}(Q)
\end{equation*}
for some non-cyclotomic global height $h_p^{\star}$. Then, if $Q$ is in addition a torsion point, we have $\rho(Q)\in \mathcal{X}(\mathbb{Z}_p)_2$.

In order to introduce these more general types of heights, we need to recall the definition and properties of an idele class character.
\begin{mydef}
Let $\mathbb{A}_F^{\times}$ be the group of ideles of $F$. An idele class character is a continuous homomorphism
\begin{equation*}
\chi = \sum_{\mathfrak{q}}\chi_{\mathfrak{q}}\colon \mathbb{A}_F^{\times}/F^{\times}\to \mathbb{Q}_p;
\end{equation*}
here the sum is over all places of $F$.
\end{mydef}
We list some properties of an idele class character $\chi$ (see \cite{QCnfs} for more details).

\begin{enumerate}[label =(P\Roman*)]
\item The local character $\chi_{\mathfrak{q}}$ is trivial at an archimedean place $\mathfrak{q}$. Thus, henceforth $\mathfrak{q}$ will always denote a finite prime.
\item \label{prop:vanishes_on_units} At a prime $\mathfrak{q}$ not above $p$, the local character $\chi_{\mathfrak{q}}$ vanishes on the units $\OO_{\mathfrak{q}}^{\times}$. Thus, the value of $\chi_{\mathfrak{q}}$ at a uniformiser determines $\chi_{\mathfrak{q}}$ completely.
\item At a prime $\mathfrak{p}$ above $p$, the restriction of the character $\chi_{\mathfrak{p}}$ to $\OO_{\mathfrak{p}}^{\times}$ equals the composition
\begin{equation*}
\OO_{\mathfrak{p}}^{\times} \xrightarrow{\log_{\mathfrak{p}}} F_{\mathfrak{p}}\xrightarrow{t_{\mathfrak{p}}} \mathbb{Q}_p 
\end{equation*}
for some $\mathbb{Q}_p$-linear map $t_{\mathfrak{p}}$. Here $\log_{\mathfrak{p}}$ is the restriction to $\OO_\mathfrak{p}^{\times}$ of the extension of $\log$ to $F_{\mathfrak{p}}^{\times}$.
\item\label{prop:fund_units} The character $\chi$ is completely determined by the trace maps $(t_{\mathfrak{p}})_{\mathfrak{p}\mid p}$ and, conversely, a tuple of $\mathbb{Q}_p$-linear maps $(t_{\mathfrak{p}}\colon F_{\mathfrak{p}}\to \mathbb{Q}_p)_{\mathfrak{p}\mid p}$ gives an idele class character $\chi$ if and only if 
\begin{equation}
\label{eq:vanishonunits}
 \sum_{\mathfrak{p}\mid p} t_{\mathfrak{p}}(\log_{\mathfrak{p}}(\rho_{\mathfrak{p}}(\epsilon)))=0\quad \text{for all }\epsilon\in \OO_{F}^{\times},
\end{equation}
where $\rho_{\mathfrak{p}}\colon F\hookrightarrow F_{\mathfrak{p}}$ is the completion (see \cite{QCnfs} for a proof). 
\end{enumerate}
In particular, it suffices to check that (\ref{eq:vanishonunits}) is satisfied for a set of fundamental units and \ref{prop:fund_units} gives a concrete method for classifying all idele class characters for a given number field $F$. The maximal number of independent characters is at least $r_2+1$, where $r_2$ is the number of conjugate pairs of non-real embeddings of $F$ into $\mathbb{C}$ (with equality if Leopoldt's conjecture holds for $F$).

For instance, for any number field $F$, the \emph{cyclotomic} idele class character is the idele class character corresponding to the tuple of trace maps $(\tr_{F_{\mathfrak{p}}/\mathbb{Q}_p})_{\mathfrak{p}\mid p}$. When $F=\mathbb{Q}$ (or $F$ is a totally real abelian number field), this is the only non-trivial idele class character, up to multiplication by a scalar. The $p$-adic height we have considered so far is implicitly associated to this character.

More generally though, we can define a $p$-adic height as a  composition of two maps: firstly, we associate to a point $P\in E(F)$ an idele $i(P)$ and, secondly, we apply to $i(P)$ an idele class character $\chi$. We denote the corresponding local and global heights by $\lambda_{\fq}^{\chi}$ and $h_p^{\chi}$, respectively. The theory of local heights that we outlined in the cyclotomic case in \S \ref{padicheight} goes through unvaried at the primes $\fq\nmid p$, after replacing the $p$-adic logarithm with $-\frac{\chi_{\mathfrak{q}}}{n_{\mathfrak{q}}}$. At the primes $\fp\mid p$, we may assume here that we always work with points not in the residue disk of the point at infinity\footnote{There is a subtlety in the disk at infinity which has to do with the choice of branch of the $\fp$-adic logarithm. See also \cite[Remark 2.1]{QCnfs}.}. So let $z\in \mathcal{X}(\OO_{\fp})$ and $m\in \mathbb{N}$ such that $mz$ is in the domain of convergence of $\sigma_{\fp}^{(\gamma)}$. Then
\begin{equation}
\label{eq:local_p_non_trivial}
\lambda_{\fp}^{\chi}(z)=-\frac{2}{m^2}\frac{\chi_{\fp}}{n_{\fp}}\left(\frac{\sigma_{\fp}^{(\gamma)}(mz)}{f_m(z)}\right)=-\frac{2}{m^2n_{\fp}}t_{\fp}\left(\log_{\fp}\left(\frac{\sigma_{\fp}^{(\gamma)}(mz)}{f_m(z)}\right)\right),
\end{equation}
since
\[\ordnop_{\fp}(\sigma_{\fp}^{(\gamma)}(mz))=\ordnop_{\fp}(x(mz)y(mz)^{-1})=\ordnop_{\fp}(f_m(z))\]
(see \S \ref{sec:algorithmelliptic} and the proof of Theorem \ref{thm:extrapoints} for how to interpret \eqref{eq:local_p_non_trivial} when $mz=O$).
We will omit $\chi$ from our notation when using the cyclotomic character.

\begin{example}
Let $F$ be an imaginary quadratic field in which $p$ splits. Then by \ref{prop:fund_units}, any pair of $\mathbb{Q}_p$-linear maps $\mathbb{Q}_p\to\mathbb{Q}_p$ gives rise to an idele class character. In particular, choosing $(\mathrm{id}, -\mathrm{id})$ gives the so-called \emph{anticyclotomic} character. 
\end{example}

We now give an instance of how the existence of non-cyclotomic heights for imaginary quadratic fields can give rise to points in $\mathcal{X}(\mathbb{Z}_p)_2\setminus \mathcal{X}(\mathbb{Z})$.

\begin{prop}
\label{prop:anticyclotomiccyclotomic}
Suppose that $E$ satisfies the assumptions of Theorem \ref{level2rank0} and that $p$ is an odd prime of good reduction. Let $K$ be an imaginary quadratic field 
in which $p$ splits. Fix an embedding $\rho\colon K\hookrightarrow \mathbb{Q}_p$. Suppose that $z\in \mathcal{X}(\OO_K)_{\mathrm{tors}}$ has good reduction at all primes that split in $K$. Then
\begin{equation*}
2D_2(\rho(z))+\sum_{q\in S}\lambda_{\mathfrak{q}}(z) = 0,
\end{equation*}
where $\mathfrak{q}$ is a prime of $K$ above $q$. In particular, if $\sum_{q\in S}\lambda_{\mathfrak{q}}(z)=||w||$, for some $w\in W$, then $\rho(z)\in\mathcal{X}(\mathbb{Z}_p)_2$.
\end{prop}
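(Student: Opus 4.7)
The plan is to combine the cyclotomic $p$-adic height $h_p$ over $K$ with an anticyclotomic counterpart $h_p^{\mathrm{anti}}$, which exists because $K$ is imaginary quadratic with $p$ split. Write $\mathfrak{p}, \overline{\mathfrak{p}}$ for the two primes of $K$ above $p$, with $\mathfrak{p}$ corresponding to $\rho$, and let $h_p^{\mathrm{anti}}$ be the height attached to the idele class character $\chi^{\mathrm{anti}}$ with $t_{\mathfrak{p}}=\mathrm{id}$, $t_{\overline{\mathfrak{p}}}=-\mathrm{id}$; this is a valid idele class character because $\OO_K^{\times}$ is finite, so \eqref{eq:vanishonunits} is automatic. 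Since $z$ is torsion, $h_p(z)=0=h_p^{\mathrm{anti}}(z)$, and the proof will come from combining the local decompositions of these two equalities.

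Local analysis proceeds in three cases. At the primes above $p$: since $p$ is split, $K_{\mathfrak{p}}=\mathbb{Q}_p$ and the cyclotomic trace is the identity, so a direct comparison with \eqref{eq:local_p_non_trivial} yields $\lambda^{\mathrm{anti}}_{\mathfrak{p}}(z)=\lambda_{\mathfrak{p}}(z)$ and $\lambda^{\mathrm{anti}}_{\overline{\mathfrak{p}}}(z)=-\lambda_{\overline{\mathfrak{p}}}(z)$. At a prime $\mathfrak{q}\nmid p$: both $\chi^{\mathrm{anti}}_{\mathfrak{q}}$ and the cyclotomic local character vanish on units by \ref{prop:vanishes_on_units}, hence one is a scalar multiple of the other, and by linearity of the local height in the character one obtains $\lambda^{\mathrm{anti}}_{\mathfrak{q}}=c_{\mathfrak{q}}\lambda_{\mathfrak{q}}$ for some constant $c_{\mathfrak{q}}\in\mathbb{Q}_p$. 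Applying $\sum_v \chi^{\mathrm{anti}}_v(q)=0$ to the rational prime $q$ lying under $\mathfrak{q}$ and using that the contributions at $\mathfrak{p}$ and $\overline{\mathfrak{p}}$ cancel forces $\chi^{\mathrm{anti}}_{\mathfrak{q}}\equiv 0$ whenever $q$ is inert or ramified in $K$; in that case $c_{\mathfrak{q}}=0$. At split $\mathfrak{q}\nmid p$, the hypothesis that $z$ has good reduction together with $z\in\mathcal{X}(\OO_K)$ places $z$ in $E_0(K_{\mathfrak{q}})\setminus E_1(K_{\mathfrak{q}})$ with $x(z)$ a $\mathfrak{q}$-integer, so Lemma \ref{lemmalocalheights}\thinspace\ref{goodreduction} gives $\lambda_{\mathfrak{q}}(z)=0$ and hence $\lambda^{\mathrm{anti}}_{\mathfrak{q}}(z)=c_{\mathfrak{q}}\lambda_{\mathfrak{q}}(z)=0$ also.

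Collecting the surviving local contributions, the decompositions of $h_p(z)=0$ and $h_p^{\mathrm{anti}}(z)=0$ reduce to
\begin{align*}
0 &= \lambda_{\mathfrak{p}}(z)+\lambda_{\overline{\mathfrak{p}}}(z)+2\sum_{\substack{q\in S\\ q\text{ non-split in }K}}\lambda_{\mathfrak{q}}(z),\\
0 &= \lambda_{\mathfrak{p}}(z)-\lambda_{\overline{\mathfrak{p}}}(z),
\end{align*}
the factor $2$ coming from $n_{\mathfrak{q}}=2$ at non-split primes and all primes of good reduction contributing zero by Lemma \ref{lemmalocalheights}\thinspace\ref{goodreduction}. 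Averaging the two relations and reinserting the vanishing contributions at split primes of $S$ gives $\lambda_{\mathfrak{p}}(z)+\sum_{q\in S}\lambda_{\mathfrak{q}}(z)=0$. Since $\Log(\rho(z))=0$, the formula from \S \ref{sec:heightsabovep} simplifies to $\lambda_{\mathfrak{p}}(z)=2D_2(\rho(z))$, completing the proof of the displayed identity; the ``in particular'' statement is then an immediate consequence of Theorem \ref{level2rank0}\thinspace(\ref{nontrivial}).

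The hardest part will be the cross-character uniqueness needed for the equality $\lambda^{\mathrm{anti}}_{\mathfrak{q}}=c_{\mathfrak{q}}\lambda_{\mathfrak{q}}$: one must verify that the characterisation of a $p$-adic local N\'eron function given in \S \ref{sec:heightsawayfromp} goes through with $-\chi^{\mathrm{anti}}_{\mathfrak{q}}/n_{\mathfrak{q}}$ in place of $\log|{-}|_{\mathfrak{q}}$, and that the resulting local height depends linearly on the chosen character of $K_{\mathfrak{q}}^{\times}$. A secondary caveat is that $\mathcal{E}$ may fail to be $\mathfrak{q}$-minimal at primes of $K$ ramified over $\mathbb{Q}$; since those primes are non-split in $K$ the anticyclotomic local correction implicit in \eqref{lambdaqlambdaqmin} vanishes, and the cyclotomic correction is carried uniformly through the computation without affecting the final identity.
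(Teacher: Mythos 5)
Your proof is correct and is essentially the paper's argument: averaging the cyclotomic and anticyclotomic heights amounts to working with the single idele class character attached to $(t_{\mathfrak{p}},t_{\overline{\mathfrak{p}}})=(\mathrm{id},0)$, which is exactly the character the paper uses directly (its local components vanish at $\overline{\mathfrak{p}}$ and at all non-split primes away from $p$, and satisfy $2\lambda^{\chi}_{\mathfrak{q}}=\lambda_{\mathfrak{q}}$ there, absorbing the factor $n_{\mathfrak{q}}=2$ just as your averaging does). The two caveats you flag (linearity of the local Néron function in the character, and non-minimality at ramified primes) are handled correctly and are likewise taken for granted in the paper.
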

\begin{proof}
It suffices to show that $2D_2(\rho(z))+\sum_{q\in S}\lambda_{\mathfrak{q}}(z)$ is the value at $z$ of a height function on $E(K)$, since then the assumption that $z$ is a torsion point will imply the vanishing. The height function that we are after is the one corresponding to an idele class character $\mathbb{A}_K^{\times}/K^{\times}\to \mathbb{Q}_p$ which vanishes on $\OO_{\overline{\mathfrak{p}}}^{\times}$, if $\mathfrak{p}$ is the prime corresponding to the embedding $\rho$. Indeed, with the notation of \ref{prop:fund_units}, consider the idele class character corresponding to $(\mathrm{id}\colon K_{\mathfrak{p}}\simeq \mathbb{Q}_p\to \mathbb{Q}_p,0\colon K_{\overline{\mathfrak{p}}}\simeq \mathbb{Q}_p\to \mathbb{Q}_p)$. Then
\begin{equation*}
\lambda_{\mathfrak{p}}(z)=\lambda_{\mathfrak{p}}^{\chi}(z)\quad \text{and}\quad \lambda_{\overline{\mathfrak{p}}}^{\chi}(z)=0.
\end{equation*}
Furthermore, since $\chi$ factors through $\mathbb{A}_K^{\times}/K^{\times}$ and in view of \ref{prop:vanishes_on_units}, if there is a unique prime $\mathfrak{q}$ above $q$, we have 
\begin{equation*}
\chi_{\mathfrak{q}}(q)= -\chi_{\mathfrak{p}}(q)-\chi_{\overline{\mathfrak{p}}}(q)= -\log(q),
\end{equation*}
so that $2\lambda_{\mathfrak{q}}^{\chi}=\lambda_{\mathfrak{q}}$ for all primes which are either inert or ramified. Thus $2h_p^{\chi}(z)=2D_2(\rho(z))+\sum_{q\in S}\lambda_{\mathfrak{q}}(z)$.
\end{proof}

In some cases, extra points in $\mathcal{X}(\mathbb{Z}_p)_2$ are explained by a combination of automorphisms and non-cyclotomic idele class characters, as in Proposition \ref{prop:twists_X049}. Before we state it and prove it, we first need an auxiliary lemma.

\begin{lemma}
\label{lemma:extending_characters}
Let $F$ be a number field and let $L$ be a finite extension of $F$. Suppose that $\chi\colon \mathbb{A}_{F}^{\times}/F^{\times}\to \mathbb{Q}_p$ is an idele class character determined by the tuple of $\mathbb{Q}_p$-linear maps $(t_{\mathfrak{p}}\colon F_{\mathfrak{p}}\to\mathbb{Q}_p)_{\mathfrak{p}\mid p}$. Then the tuple $(t_{\mathfrak{q}}^{L}\colon L_{\mathfrak{q}}\to\mathbb{Q}_p)_{\mathfrak{q}\mid p}$, defined by $t_{\mathfrak{q}}^L = t_{\mathfrak{p}}\circ \tr_{L_{\mathfrak{q}}/F_{\mathfrak{p}}}$ for $\mathfrak{q}\mid\mathfrak{p}$, determines an idele class character $\chi^L\colon \mathbb{A}^{\times}_L/L^{\times}\to\mathbb{Q}_p$ such that $\chi^{L}|_{\mathbb{A}^{\times}_F/F^{\times}}=[L\colon F]\chi$.
\end{lemma}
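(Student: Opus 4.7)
The plan is to apply the criterion in \ref{prop:fund_units} twice: first to verify that $(t^L_{\mathfrak{q}})_{\mathfrak{q}\mid p}$ satisfies the vanishing condition \eqref{eq:vanishonunits} on $\OO_L^{\times}$, so that it really defines an idele class character $\chi^L$; then to identify $\chi^L|_{\mathbb{A}_F^{\times}/F^{\times}}$ with $[L\colon F]\chi$ by comparing the tuples of local trace maps at the primes of $F$ above $p$.

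For the first step, the key input is the local--global compatibility
\[
\sum_{\mathfrak{q}\mid\mathfrak{p}}\tr_{L_{\mathfrak{q}}/F_{\mathfrak{p}}}\bigl(\log_{\mathfrak{q}}(\rho_{\mathfrak{q}}(\alpha))\bigr)=\log_{\mathfrak{p}}\bigl(\rho_{\mathfrak{p}}(N_{L/F}(\alpha))\bigr),\qquad \alpha\in L^{\times},
\]
which follows from the isomorphism $L\otimes_F F_{\mathfrak{p}}\simeq\prod_{\mathfrak{q}\mid\mathfrak{p}}L_{\mathfrak{q}}$ together with the classical identity $\log_{\mathfrak{p}}\circ N_{L_{\mathfrak{q}}/F_{\mathfrak{p}}}=\tr_{L_{\mathfrak{q}}/F_{\mathfrak{p}}}\circ\log_{\mathfrak{q}}$, valid because $\log$ commutes with the Galois action on $\overline{\mathbb{Q}_p}^{\times}$. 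Applying $t_{\mathfrak{p}}$ to both sides and summing over $\mathfrak{p}\mid p$ yields, for every $\epsilon\in\OO_L^{\times}$,
\[
\sum_{\mathfrak{q}\mid p}t^L_{\mathfrak{q}}\bigl(\log_{\mathfrak{q}}(\rho_{\mathfrak{q}}(\epsilon))\bigr)=\sum_{\mathfrak{p}\mid p}t_{\mathfrak{p}}\bigl(\log_{\mathfrak{p}}(\rho_{\mathfrak{p}}(N_{L/F}(\epsilon)))\bigr),
\]
which vanishes since $N_{L/F}(\epsilon)\in\OO_F^{\times}$ and $\chi$ already satisfies \eqref{eq:vanishonunits}.

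For the second step, $\chi^L$ vanishes on $L^{\times}\supset F^{\times}$, so its pullback along the diagonal embedding $\mathbb{A}_F^{\times}\hookrightarrow\mathbb{A}_L^{\times}$ factors through $\mathbb{A}_F^{\times}/F^{\times}$ and is thus an idele class character of $F$. By \ref{prop:fund_units}, to check that it equals $[L\colon F]\chi$ it suffices to compare trace maps at each prime of $F$ above $p$. The diagonal embedding sends an idele concentrated at $\mathfrak{p}\mid p$ with local value $a\in F_{\mathfrak{p}}^{\times}$ to the idele carrying $a$ at every $\mathfrak{q}\mid\mathfrak{p}$ and $1$ elsewhere, so
\[
\chi^L(a)=\sum_{\mathfrak{q}\mid\mathfrak{p}}t_{\mathfrak{p}}\bigl(\tr_{L_{\mathfrak{q}}/F_{\mathfrak{p}}}(\log_{\mathfrak{p}}(a))\bigr)=\Bigl(\sum_{\mathfrak{q}\mid\mathfrak{p}}[L_{\mathfrak{q}}\colon F_{\mathfrak{p}}]\Bigr)\,t_{\mathfrak{p}}(\log_{\mathfrak{p}}(a))=[L\colon F]\,\chi_{\mathfrak{p}}(a),
\]
using that $\log_{\mathfrak{q}}$ restricts to $\log_{\mathfrak{p}}$ on $F_{\mathfrak{p}}$ and the degree formula $\sum_{\mathfrak{q}\mid\mathfrak{p}}[L_{\mathfrak{q}}\colon F_{\mathfrak{p}}]=[L\colon F]$. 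I do not foresee any real obstacle: the whole argument is bookkeeping around the decomposition of $L\otimes_F F_{\mathfrak{p}}$ and the interplay between $\log$, $N$ and $\tr$ for extensions of $p$-adic fields, with the only step worth a moment of care being the global norm--trace identity invoked in step one.
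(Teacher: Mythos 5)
Your proof is correct and follows essentially the same route as the paper: verify the vanishing condition on $\OO_L^{\times}$ via the compatibility of $\log$, trace and norm under the decomposition $L\otimes_F F_{\mathfrak{p}}\simeq\prod_{\mathfrak{q}\mid\mathfrak{p}}L_{\mathfrak{q}}$, then reduce to $N_{L/F}(\epsilon)\in\OO_F^{\times}$. The only difference is that you spell out the restriction claim $\chi^L|_{\mathbb{A}_F^{\times}/F^{\times}}=[L\colon F]\chi$ via the degree formula, which the paper dismisses as holding ``by construction.''
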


\begin{proof}
Each $t_{\mathfrak{q}}^L$ is $\mathbb{Q}_p$-linear as a composition of $\mathbb{Q}_p$-linear maps. We need to check that \ref{prop:fund_units} is satisfied. If $\epsilon\in \OO_L^{\times}$, then
\begin{align*}
\sum_{\mathfrak{q}\mid p}t_{\mathfrak{q}}^L(\log_{\fq}(\rho_{\mathfrak{q}}(\epsilon)))&=\sum_{\mathfrak{p}\mid p}t_{\mathfrak{p}}\circ \biggl(\sum_{\mathfrak{q}\mid\mathfrak{p}}\tr_{L_{\mathfrak{q}}/F_{\mathfrak{p}}}\circ\log_{\fq}(\rho_{\mathfrak{q}}(\epsilon))\biggr)\\
&= \sum_{\mathfrak{p}\mid p}t_{\mathfrak{p}}\circ\log_{\fp}\biggl(\prod_{\mathfrak{q}\mid\mathfrak{p}}N_{L_{\mathfrak{q}}/F_{\mathfrak{p}}}(\rho_{\mathfrak{q}}(\epsilon))\biggr)\\
&= \sum_{\mathfrak{p}\mid p}t_{\mathfrak{p}}\circ\log_{\fp}\big(\rho_{\mathfrak{p}}(N_{L/F}(\epsilon))\big)=0,
\end{align*}
since $N_{L/F}(\epsilon)\in \OO_F^{\times}$. By construction, the resulting idele class character $\chi^L$ restricts to $[L:F]\chi$ on $\mathbb{A}_{F}^{\times}/F^{\times}$. 
\end{proof}

\begin{prop}
\label{prop:twists_X049}
Let $d$ be a non-zero square-free integer and let $E^d$ be the quadratic twist of $X_0(49)$ by $d$; assume that $E^d$ satisfies the assumptions of Theorem \ref{level2rank0} and let $\mathcal{X}^d$ be the complement of the origin in the minimal regular model of $E^d$. Let $p\nmid 7d$ be an odd prime with at least $3$ primes lying above it in $L=\mathbb{Q}[x]/(x^4+7d^2)$. Then $$\mathcal{X}^d(\mathbb{Z}_p)_2\supseteq \mathcal{X}^d(\mathbb{Z})\cup \{\pm  \rho(Q)\}$$ for some $Q\in\mathcal{X}^d(\OO_L)$ of order $4$ and for every embedding $\rho\colon L\hookrightarrow \mathbb{Q}_p$.
\end{prop}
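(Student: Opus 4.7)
The plan is to construct an explicit $4$-torsion point $Q$ of $E^d$ defined over $L$ and then to verify, via a non-cyclotomic $p$-adic height on $E^d/L$, the defining equation of $\mathcal{X}^d(\mathbb{Z}_p)_2$ from Theorem \ref{level2rank0} at $\pm\rho(Q)$.

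\textbf{Finding $Q$.} First I would exploit the fact that $X_0(49)$ has complex multiplication by the maximal order of $K=\mathbb{Q}(\sqrt{-7})$. Letting $\alpha=(1+\sqrt{-7})/2\in\mathrm{End}(X_0(49))$, which has norm $2$, the kernel $X_0(49)[\alpha]$ is $\{O,T\}$ with $T$ the rational $2$-torsion point, so $X_0(49)[\alpha^2]$ is cyclic of order $4$ and its non-$2$-torsion generators are defined over the ray class field of $K$ of conductor $\alpha^2$, a quartic extension of $\mathbb{Q}$. Transporting to $E^d$ via the twisting isomorphism (defined over $\mathbb{Q}(\sqrt{d})$) and carrying out an explicit computation with the Weierstrass equations should identify this quartic field with $L=\mathbb{Q}[x]/(x^4+7d^2)$ and produce a $4$-torsion point $Q\in\mathcal{X}^d(\mathcal{O}_L)$ with $2Q=T^d$, the rational $2$-torsion of $E^d$.

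\textbf{Heights away from $p$.} For each $q\in S$ and a choice of prime $\mathfrak{q}\mid q$ of $L$, the quasi-quadratic property \ref{property:quasiquadratic} together with $2Q=T^d$ gives
\[
\lambda_\mathfrak{q}(Q)=\tfrac14\bigl(\lambda_\mathfrak{q}(T^d)+2\log|f_2(Q)|_\mathfrak{q}\bigr),
\]
where $\lambda_\mathfrak{q}(T^d)$ is provided by Propositions \ref{propositionheightstamagawa1}--\ref{localheightsnontrivialtamagawa} applied to $E^d/L$ (with the correction \eqref{lambdaqlambdaqmin} accounting for the non-minimality of $\mathcal{X}^d$ at primes of $L$), and $f_2(Q)$ is the $2$-division polynomial evaluated at $Q$. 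Summing these contributions should show that the appropriate combination over $q\in S$ lies in $\|W\|$.

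\textbf{Non-cyclotomic character and conclusion.} The hypothesis that at least three primes of $L$ lie above $p$ forces $p$ to split in $K$ as $p\mathcal{O}_K=\mathfrak{p}\bar{\mathfrak{p}}$ and forces at least one of $\mathfrak{p},\bar{\mathfrak{p}}$ to split in $L/K$. Then I would use Lemma \ref{lemma:extending_characters} to lift the anticyclotomic character of $K$ (well-defined because $p$ splits in $K$, cf.\ Proposition \ref{prop:anticyclotomiccyclotomic}) to an idele class character $\chi$ of $L$: at the prime $\mathfrak{p}_1$ of $L$ associated to $\rho$ one gets $\lambda^\chi_{\mathfrak{p}_1}=\lambda_{\mathfrak{p}_1}$, at primes above $\bar{\mathfrak{p}}$ one gets $\lambda^\chi=0$, and a suitable adjustment by a multiple of the cyclotomic character of $L$ can then absorb the contribution of the second prime above $\mathfrak{p}$ (if $\mathfrak{p}$ splits in $L/K$), so that the only remaining contribution from primes above $p$ is at $\mathfrak{p}_1$ and equals $\lambda_p(\rho(Q))$. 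The global vanishing $h_p^\chi(Q)=0$ (since $Q$ is torsion) together with the previous step yields $\lambda_p(\rho(Q))+\|w\|=0$, which by Remark \ref{rmk:globaltorsion} rewrites as $2D_2(\rho(Q))+\|w\|=0$ in view of $\Log(\rho(Q))=0$, so that $\rho(Q)\in\mathcal{X}^d(\mathbb{Z}_p)_2$ by Theorem \ref{level2rank0}\thinspace(\ref{nontrivial}). Replacing $Q$ by $-Q$ handles $-\rho(Q)$ by symmetry.

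\textbf{Main obstacle.} The principal difficulty is the uniform bookkeeping in the second step: for an arbitrary squarefree $d$ one must enumerate the reduction types of $E^d$ at primes in $S$ and verify that the $f_2(Q)$ correction combines with $\lambda_\mathfrak{q}(T^d)$ to land in $\|W\|$. A secondary difficulty is checking in the third step that the desired character $\chi$ really can be constructed with the claimed local profile, i.e.\ that the unit constraint \eqref{eq:vanishonunits} is non-degenerate with respect to a fundamental unit of $L$; the hypothesis $g\geq 3$ is what provides the needed degrees of freedom.
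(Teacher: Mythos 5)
Your overall strategy --- a $4$-torsion point $Q$ over the quartic field $L$, a non-cyclotomic idele class character extended from $K=\mathbb{Q}(\sqrt{-7})$ to $L$ via Lemma \ref{lemma:extending_characters}, and the vanishing of $h_p^{\chi}(Q)$ --- is the same as the paper's, but two steps have genuine gaps. First, at $p$: when $\mathfrak{p}$ splits in $L/K$ as $\mathfrak{q}_1\overline{\mathfrak{q}_1}$, the extended character has component $\mathrm{id}$ at \emph{both} $\mathfrak{q}_1$ and $\overline{\mathfrak{q}_1}$, and your proposed fix --- adding a multiple of the cyclotomic character of $L$ to kill the $\overline{\mathfrak{q}_1}$-component --- does not work: that perturbs the components at every prime above $p$ simultaneously, and a dimension count (the idele class characters of the totally imaginary quartic $L$ form a space of dimension $r_2+1=3$ inside the $4$-dimensional space of tuples $(t_{\mathfrak{q}})_{\mathfrak{q}\mid p}$, cut out by vanishing on the fundamental unit) shows one cannot in general prescribe a character supported at a single prime above $p$. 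The device that actually closes this step is the Galois relation $\overline{\tau}(Q)=-Q$ for $\overline{\tau}$ generating $\mathrm{Gal}(L/K)$ (which swaps $\mathfrak{q}_1$ and $\overline{\mathfrak{q}_1}$), combined with evenness of local heights: it gives $\lambda_{\overline{\mathfrak{q}_1}}(Q)=\lambda_{\mathfrak{q}_1}(Q)$, so the contribution at $p$ is $2\lambda_p(\rho(Q))$ and the identity to prove becomes $\lambda_p(\rho(Q))=-\tfrac12\sum_{v\nmid p}n_v\lambda_v^{\chi}(Q)$. You never invoke this symmetry of $Q$, and without it the argument at $p$ does not close.

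Second, the step you defer as ``bookkeeping'' --- showing that the away-from-$p$ contribution lands in $||W||$ uniformly in the squarefree integer $d$ --- is the heart of the proof, and a direct enumeration via quasi-quadraticity and $f_2(Q)$ is not a viable uniform argument: the bad primes and reduction types of $E^d$ vary with $d$, and one must simultaneously track the components $\chi_v$ at split versus inert versus ramified primes of $L$. The paper's resolution is to compare $Q$ with the rational $2$-torsion point $P=(63d,0)\in\mathcal{X}^d(\mathbb{Z})$: after a change of variables over $\mathbb{Q}(\sqrt{d})$ both points become integral on a model of discriminant $-7^3$, so all local heights away from $7p$ reduce to the discriminant correction (\ref{lambdaqlambdaqmin}), and one proves the single identity $\tfrac12\sum_{v\nmid p}n_v\lambda_v^{\chi}(Q)=\sum_{\ell\nmid p}\lambda_{\ell}(P)$ by comparing discriminants, plus a separate check at $7$ where $E^d/L$ acquires good reduction. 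Since $P$ is a global integral torsion point, the right-hand side automatically equals $||w||$ for some $w\in W$, and no case-by-case analysis of Kodaira types is needed. Without some such mechanism your second step is not a routine verification but an unresolved problem inside the proof.
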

\begin{rmk}
The proposition also holds in rank $1$ if we replace $\mathcal{X}^d(\mathbb{Z}_p)_2$ with $\mathcal{X}^d(\mathbb{Z}_p)_2^{\prime}$.
\end{rmk}

\begin{proof}
The elliptic curve $E = X_0(49)$ has reduced minimal model
\begin{equation}
\label{eq:49minimal}
\mathcal{E}\colon y^2 + x y = x^{3} -  x^{2} - 2 x - 1;
\end{equation}
however, since we are considering quadratic twists of $E$, it is more convenient to work (at least until we introduce heights) with the following model
\begin{equation*}
\mathcal{E}_{\mathrm{short}}\colon y^2 = x^3 - 2835x - 71442,
\end{equation*} 
as then the twist $E^d$ of $E$ by the non-zero square-free integer $d$ admits the Weierstrass equation
\begin{equation*}
\mathcal{E}_{\mathrm{short}}^d\colon y^2 = x^3 - 2835d^2x - 71442d^3.
\end{equation*}

Recall that $E^d$ has complex multiplication by $K = \mathbb{Q}(a)$, where $a$ is a root of $x^2+7$. Over $K[x,y]$, the fourth division polynomial $f_4^d$ of $\mathcal{E}^d_{\mathrm{short}}$ has the following factorisation
\begin{align*}
f_4^d(x,y) = 4y(x -9ad)  (x + 9ad) (x + (-18a + 63)d)  (x + (18a + 63)d)\\
(x^2 - 126xd - 5103d^2).
\end{align*} 
In particular, since 
$$x^3 - 2835d^2x - 71442d^3= (x - 63d) (x + (-9/2a + 63/2)d) (x + (9/2a + 63/2)d),$$
all the points of order $2$ are defined over $K$. As for the points of order $4$, we see that, as a polynomial in $x$, $f_4^d(x,y)/y$ has two roots in $\mathbb{Q}(\sqrt{7})$ and four roots in $K$. Substituting the latter roots into the equation for $\mathcal{E}_{\mathrm{short}}^d$, we find that the $y$-coordinates of the points with
$x=9ad$ and $x=-(18a + 63)d$ are defined over $\mathbb{Q}(\sqrt{-ad})$, whereas those with $x=-9ad$ and $x=-(-18a + 63)d$ are over  $\mathbb{Q}(\sqrt{ad})$.

Therefore, over the quartic field $L= K[x]/(x^2-ad)= K(b)\cong \mathbb{Q}[x]/(x^4+7d^2)$, $E^d(L)[4]\cong \mathbb{Z}/2\times \mathbb{Z}/4$. Let $\mathrm{Gal}(L/K)=\braket{\overline{\tau}}$ and let 
\begin{equation*}
Q_{\mathrm{short}} = (18b^2-63d, \pm(54b^3 - 378bd) )\in \mathcal{E}^d_{\mathrm{short}}(L)[4],\qquad P_{\mathrm{short}}=(63d,0).
\end{equation*}
Then $Q_{\mathrm{short}}$ satisfies
\begin{equation}
\label{eq:QandQtauX049}
\overline{\tau}(Q_{\mathrm{short}}) = -Q_{\mathrm{short}}. 
\end{equation}
Let $Q$ be the image of $Q_{\mathrm{short}}$ in a minimal model $\mathcal{E}^d$ for $E^d$ over $\mathbb{Z}$ and let $P$ be the image of $P_{\mathrm{short}}$. Note that
\begin{itemize}
\item if $d\equiv 1 \bmod{4}$, we may apply to $\mathcal{E}^d_{\mathrm{short}}$ the change of variables
\begin{equation*}
x\mapsto 36x-9d,\qquad y\mapsto 216y+108x
\end{equation*}
to obtain the integral model
\begin{equation*}
\mathcal{E}^{d}_{1}\colon y^2 + xy = x^3  -\frac{3d+1}{4}x^2 - 2d^2x - d^3.
\end{equation*}
The discriminant of $\mathcal{E}^d_{1}$ is $\Delta=-7^3d^6$, so by \cite[VII, Remark 1.1]{silvermanAEC} $\mathcal{E}^d_{1}$ is a minimal model for $E^d/\mathbb{Q}$ and we may set $\mathcal{E}^d=\mathcal{E}^d_1$. Then $x(P) =2d\in \mathbb{Z}$ and $x(Q) \in \OO_K$. 
\item if $d\equiv 2,3 \bmod{4}$, then we may take
\begin{equation*}
\label{eq:cube_non_reduced}
\mathcal{E}^d\colon y^2 = x^3 - 3dx^2 - 32d^2x - 64d^3,
\end{equation*}
which has discriminant $\Delta = -2^{12} \cdot 7^3\cdot d^6$. Minimality of $\mathcal{E}^d$ at the primes different from $2$ follows as in the case $d\equiv 1 \bmod{4}$. At the prime $2$, it can be deduced following Tate's algorithm. We have $x(P)=8d\in\mathbb{Z}$ and $x(Q) \in \OO_K$.
\end{itemize} 

Now, let $p$ be an odd prime of good reduction for $E^d$ which splits in $K$. By Deuring's criterion, this is equivalent to requiring that $p$ is a prime of good ordinary reduction. Let $p\OO_K = \mathfrak{p}\overline{\mathfrak{p}}$. The prime $p$ is unramified also in $L$, since we are assuming that it is of good reduction. 
We suppose furthermore that $\mathfrak{p} \OO_L =\mathfrak{q}_1\overline{\mathfrak{q}_1}$, $\overline{\mathfrak{p}} \OO_L =\mathfrak{q}_2$ or $\overline{\mathfrak{p}} \OO_L =\mathfrak{q}_2\overline{\mathfrak{q}_2}$, for some primes $\mathfrak{q}_1$ and $\mathfrak{q}_2$ of $L$. These conditions, together, are equivalent to those of the statement of the proposition.

By Lemma \ref{lemma:extending_characters}, the idele class character on $\mathbb{A}_K^{\times}/K^{\times}$ which is trivial on $\mathcal{O}_{\overline{\mathfrak{p}}}^{\times}$ (and which we used also in the proof of Proposition \ref{prop:anticyclotomiccyclotomic}) extends to an idele class character on $\mathbb{A}_L^{\times}/L^{\times}$. In particular, the tuple of linear maps $$(\mathrm{id}_{L_{\mathfrak{q}_1}\simeq \mathbb{Q}_p}, \mathrm{id}_{L_{\overline{\mathfrak{q}_1}}\simeq \mathbb{Q}_p}, 0\colon \prod_{\mathfrak{q}\mid\overline{\mathfrak{p}}} L_{\mathfrak{q}}\to \mathbb{Q}_p)$$  determines an idele class character $\chi$. Consider the associated global height $h_p^{\chi}$ on $E^d(L)$ with local heights $\lambda_v^{\chi}$ with respect to the model $\mathcal{E}^d$. Since $Q$ is a torsion point, we must have
\begin{equation*}
h_p^{\chi}(Q) = 0.
\end{equation*}
It follows from (\ref{eq:QandQtauX049}) and the definition of $\chi$ that
\begin{equation*}
\lambda_{\mathfrak{q}_1}^{\chi}(Q) = \lambda_{\overline{\mathfrak{q}_1}}^{\chi}(Q)=\lambda_{\mathfrak{q}_1}(Q) \quad \text{and}\quad \lambda_{\mathfrak{q}}^{\chi}(Q)=0\ \text{for}\ \mathfrak{q}\mid\overline{\mathfrak{p}}.
\end{equation*}
Furthermore, using \ref{prop:vanishes_on_units} and the fact that $\chi$ is trivial on $L^{\times}$, we find that, for a fixed rational prime $\ell$,
\begin{equation}
\label{eq:character_away_p}
\sum_{v\mid\ell}\chi_{v}(\ell) = -2\log{\ell}.
\end{equation}
Since $P\in \mathcal{X}^d(\mathbb{Z})$, where $\mathcal{X}^d$ is the complement of the origin in $\mathcal{E}^d$, in order to prove the proposition it then suffices to show that
\begin{equation*}
\frac{1}{2}\sum_{v\nmid p}n_{v}\lambda_{v}^{\chi}(Q)= \sum_{\ell\nmid p}\lambda_{\ell}(P),
\end{equation*}
where the left sum runs over primes of $L$ and the right sum over rational primes.

In view of Lemma \ref{lemma:extending_characters} and (\ref{lambdaqlambdaqmin}), we are allowed to perform isomorphisms over extensions of $L$ to calculate local heights. In particular, the change of variables $(x,y)\mapsto (36dx-9d,216d\sqrt{d}y + 108d\sqrt{d}x)$, defined over $\mathbb{Q}(\sqrt{d})$, maps $\mathcal{E}^d_{\mathrm{short}}$ to (\ref{eq:49minimal}), which has discriminant $-7^3$. Under this isomorphism, 
\begin{equation*}
x(P_{\mathrm{short}})\mapsto 2\in\mathbb{Z}\quad \text{and}\quad x(Q_{\mathrm{short}})\mapsto \frac{a-3}{2}\in \OO_K.
\end{equation*}
Therefore, the local heights of $P$ and $Q$ away from $7p$ are trivial when computed with respect to (\ref{eq:49minimal}). Using (\ref{eq:character_away_p}) and letting $d^{\prime} = 7^{-\ordnop_7(d)}d$, we then have
\begin{equation*}
\frac{1}{2}\sum_{v\nmid 7p}n_{v}\lambda_{v}^{\chi}(Q) = -\frac{1}{12[F\colon L]}\sum_{w\nmid 7p}\chi_{w}^{F}(\Delta^{-1})=\begin{cases}
-\log {d^{\prime}}  &\ \text{if}\ d\equiv 1\bmod{4},\\
-\log{4d^{\prime}} & \ \text{otherwise},
\end{cases}
\end{equation*}
where $F=L(\sqrt{d})$, the second sum runs over the primes $w$ of $F$ and the character $\chi^F$ is the idele class character of $F$ obtained from $\chi$ as in Lemma \ref{lemma:extending_characters}.
Similarly, to calculate heights of $P$ away from $7p$ we may base change to $\mathbb{Q}(\sqrt{d})$ and get:
\begin{equation*}
\sum_{\ell\nmid 7p}\lambda_{\ell}(P) =\frac{1}{2}\sum_{u\nmid 7p}n_u\lambda_u(P)= \frac{1}{2}\sum_{u\nmid 7p}\frac{n_u}{6}\log(|\Delta|_u)=\begin{cases}
-\log {d^{\prime}}  &\ \text{if}\ d\equiv 1\bmod{4},\\
-\log{4d^{\prime}} & \ \text{otherwise},
\end{cases}
\end{equation*}
where $u$ runs over primes of $\mathbb{Q}(\sqrt{d})$.

It remains to calculate the local contributions at primes above $7$. For this, it is convenient to work with $\mathcal{E}^d_{\mathrm{short}}$ which is minimal over $\mathbb{Z}_7$. Let $v$ be the unique prime above $7$ in $L$. 
Then $\ordnop_v(\Delta) = 12+24\ordnop_7(d)$. On the other hand, by \cite[VII, Exercise 7.2]{silvermanAEC}, a minimal equation at $v$ has discriminant of valuation at most $11$. Therefore, $E^d$ has good reduction at $v$ and, since we are not in characteristic $2$ or $3$, a minimal equation at $v$ is obtained from $\mathcal{E}^d_{\mathrm{short}}$ via $(x,y)\mapsto (\pi_7^{\ordnop_v(\Delta)/6}x,\pi_7^{\ordnop_v(\Delta)/4}y)$, where $\pi_7$ is a uniformiser at $v$. Under such a change of variables, $P_{\mathrm{short}}$ and $Q_{\mathrm{short}}$ are mapped to $v$-adically integral points. 
Comparing discriminants as above, we then conclude that 
\begin{equation*}
\frac{1}{2}\sum_{v\nmid p}n_{v}\lambda_{v}^{\chi}(Q) = \sum_{\ell\nmid p}\lambda_{\ell}(P) = \begin{cases}
-\frac{1}{2}\log 7-\log d &\ \text{if}\ d\equiv 1\bmod{4},\\
-\frac{1}{2}\log 7-\log 4d &\ \text{otherwise}.
\end{cases}
\end{equation*}
\end{proof}

\subsection{Proof of Theorem \ref{thm:nonequality}}
\label{sec:proof_of_nonequality}
In this subsection we explain how Theorem \ref{thm:nonequality} can be deduced either from Corollary \ref{cor:j0automorphism} or from Proposition \ref{prop:twists_X049}. For an elliptic curve $E$ over $\mathbb{Q}$, denote by $L(E,s)$ its complex $L$-function.
\begin{thm}[\hspace{1sp}{\cite{waldspurger}, \cite{vigneras},\cite[Ch.\thinspace 6, Theorem 1.1]{Murty_non_vanishing}}]
\label{thm:non_vanishing_twist}
Let $E$ be an elliptic curve over $\mathbb{Q}$. There exist infinitely many non-zero square-free integers $d$ such that the quadratic twist $E^d$ of $E$ by $d$ satisfies $L(E^d,1)\neq 0$.
\end{thm}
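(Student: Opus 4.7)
The plan is to prove Theorem \ref{thm:non_vanishing_twist} by translating non-vanishing of the central critical values $L(E^d,1)$ into a statement about Fourier coefficients of a half-integral weight cusp form via Waldspurger's formula, and then to produce infinitely many non-vanishing coefficients from the theta-series construction underlying that formula.

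First, by the modularity theorem, $L(E,s)=L(f,s)$ for a weight $2$ newform $f$ of level $N$ equal to the conductor of $E$, so that for any fundamental discriminant $d$ coprime to $N$ one has $L(E^d,s)=L(f\otimes \chi_d,s)$, where $\chi_d$ is the Kronecker character of $\Q(\sqrt{d})/\Q$. I would then invoke Waldspurger's theorem (in the sharpened form of Kohnen--Zagier in level one, and of Baruch--Mao more generally) to produce a non-zero cusp form $g=\sum_{n\geq 1} b_n q^n$ of weight $3/2$ on a congruence subgroup of $\Gamma_0(4N')$, a sign $\epsilon\in\{\pm 1\}$, and a finite set $\Sigma$ of local conditions at primes dividing $2N$, such that for every fundamental discriminant $d$ of sign $\epsilon$ satisfying $\Sigma$,
\begin{equation*}
L(f\otimes\chi_d,1)\;=\;\kappa\,|d|^{-1/2}\,|b_{|d|}|^{2},
\end{equation*}
with $\kappa>0$ a constant independent of $d$.

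The problem then reduces to exhibiting infinitely many $d$ in the congruence conditions cut out by $\Sigma$ (which cover a positive-density subset of fundamental discriminants of sign $\epsilon$) with $b_{|d|}\neq 0$. For this step I would appeal to the realisation of $g$ as a $\Z$-linear combination of weighted theta series attached to a finite collection of positive-definite ternary quadratic forms in a genus determined by $f$: each $b_n$ is, up to a non-zero normalising constant, a fixed $\Z$-linear combination of representation numbers $r_Q(n)$. The set of fundamental discriminants $n$ in an arithmetic progression for which the $r_Q(n)$ do not all conspire to cancel is then shown to be infinite by combining the non-triviality of $g$ itself (part of Waldspurger's output) with classical equidistribution results for representation numbers by ternary forms in the spirit of Linnik and Duke.

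The main obstacle will be the local analysis at the bad primes of $E$. One must verify that the conditions in $\Sigma$ are compatible with the sign of the functional equation of $L(f\otimes\chi_d,s)$ being $+1$, so that the Waldspurger formula is non-trivial on the progression, and that they can be simultaneously imposed on infinitely many square-free $d$; this follows from a case analysis of the local root numbers at primes $q\mid N$ together with the Chinese remainder theorem. Should the local choices become too delicate, the fallback plan is the analytic approach of Murty--Murty: bound from below the first moment $\sum_{|d|\leq X} L(E^d,1)$ over fundamental discriminants by a positive multiple of $X$ via an approximate functional equation, combine this with the convexity bound $L(E^d,1)\ll_\varepsilon |d|^{\varepsilon}$, and conclude that $\gg X^{1-\varepsilon}$ of the twists $E^d$ satisfy $L(E^d,1)\neq 0$, which is amply sufficient.
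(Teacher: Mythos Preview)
The paper does not prove this theorem at all: it is stated as a result from the literature, with citations to Waldspurger, Vign\'eras, and Murty, and is used as a black box in \S\ref{sec:proof_of_nonequality}. There is therefore no proof in the paper to compare your proposal against.

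Your outline is a reasonable sketch of the standard route, but one step is overcomplicated and slightly off. Once Waldspurger's theorem gives you a non-zero weight-$3/2$ cusp form $g$ whose Fourier coefficients at fundamental discriminants (in a fixed congruence class) encode $L(f\otimes\chi_d,1)$, the remaining task is simply to show that such a non-zero form has infinitely many non-vanishing coefficients at square-free indices in that class. This is exactly what the Vign\'eras reference supplies (and is also the content of later results of Ono--Skinner and Bruinier); you do not need to realise $g$ explicitly as a combination of ternary theta series and invoke equidistribution of representation numbers. In fact that realisation is not available in general, and your ``conspire to cancel'' step would be the hard part of the argument rather than a consequence of Linnik--Duke.

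Your fallback plan via the first moment is also a correct strategy and is essentially the Murty--Murty argument cited in the paper. One caution: the convexity bound $L(E^d,1)\ll_\varepsilon |d|^{\varepsilon}$ is \emph{not} convexity (convexity gives $|d|^{1/4+\varepsilon}$); what you need is either a second-moment bound or the Iwaniec-type estimate for Fourier coefficients of half-integral weight forms, which is how the quantitative lower bound on the number of non-vanishing twists is actually obtained.
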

\begin{thm}[Kolyvagin \cite{kolyvagin}]
\label{thm:kolyvagin}
Let $E$ be an elliptic curve over $\mathbb{Q}$ such that $L(E,1)\neq 0$. Then the rank of $E(\mathbb{Q})$ is zero and the Tate--Shafarevich group of $E/\mathbb{Q}$ is finite.
\end{thm}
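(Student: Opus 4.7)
The plan is to deduce the theorem from the Gross--Zagier formula combined with Kolyvagin's Euler system of Heegner points. First, I would choose an imaginary quadratic field $K = \mathbb{Q}(\sqrt{-D})$ satisfying the Heegner hypothesis for $E$ (every prime $\ell \mid N$, where $N$ is the conductor of $E$, splits in $K$) and such that the quadratic twist $E^{-D}$ satisfies $L'(E^{-D},1) \neq 0$. The existence of such a $D$ follows from analytic non-vanishing results for derivatives of twisted $L$-functions of Bump--Friedberg--Hoffstein (and Murty--Murty), combined with the observation that the Heegner hypothesis forces the sign of the functional equation of $L(E/K, s) = L(E,s)\, L(E^{-D},s)$ to equal $-1$; since $L(E,1) \neq 0$ forces the sign for $E$ to be $+1$, the sign for $E^{-D}$ must be $-1$, so $L(E^{-D},1) = 0$ automatically and only the non-vanishing of the derivative needs to be arranged.

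Second, the Gross--Zagier formula produces a Heegner point $P_K \in E(K)$ with
\[
\hat h(P_K) \;=\; c \cdot L'(E/K, 1) \;=\; c \cdot L(E,1) \cdot L'(E^{-D}, 1),
\]
for an explicit non-zero constant $c$ depending on the modular parametrisation $X_0(N) \to E$. By the choice of $K$, the right-hand side is non-zero, so $P_K$ has infinite order in $E(K)$. Moreover, complex conjugation $\tau \in \Gal(K/\mathbb{Q})$ acts on $P_K$ by $\tau(P_K) \equiv -w_E \cdot P_K$ modulo torsion, where $w_E = +1$ is the sign of $E$; hence in the decomposition $E(K) \otimes \mathbb{Q} = (E(\mathbb{Q}) \otimes \mathbb{Q}) \oplus (E^{-D}(\mathbb{Q}) \otimes \mathbb{Q})$, the point $P_K$ lies (up to torsion) in the second summand.

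Third, I would invoke Kolyvagin's Euler system itself: from the system of higher Heegner points $P_n$ defined over the ring class fields $K[n]/K$ of squarefree conductor $n$ coprime to $ND$, Kolyvagin's derivative operators produce, for each odd prime $p$ of good reduction at which the mod-$p$ Galois representation on $E[p]$ is surjective, a family of cohomology classes in $H^1(K, E[p^k])$ whose local behaviour is controlled at every prime. A Chebotarev density argument pairing these derived classes against elements of the $p$-Selmer group bounds the latter in terms of the $p$-divisibility of $P_K$, yielding
\[
\mathrm{rank}\, E(K) \;=\; 1 \qquad \text{and} \qquad \#\sh(E/K)[p^\infty] \;<\; \infty
\]
for each such $p$, and then (with additional care) for every $p$. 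Passing to the $\tau$-eigenspaces and using that $P_K$ already generates the minus-eigenspace up to torsion gives $\mathrm{rank}\, E(\mathbb{Q}) = 0$ and finiteness of $\sh(E/\mathbb{Q})$.

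The main obstacle is by far the third step: verifying that Kolyvagin's derived classes $\kappa_n$ satisfy the correct local conditions at Kolyvagin primes (both their unramified part and their ``singular'' residue), and carrying through the global Chebotarev argument that propagates a Selmer bound from finitely many test primes. I would cite this machinery from Kolyvagin's original papers or from Gross's expository treatment rather than attempt to reprove it.
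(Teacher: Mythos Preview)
The paper does not prove this theorem at all: it is stated with a citation to Kolyvagin and used as a black box in \S\ref{sec:proof_of_nonequality} to guarantee that infinitely many quadratic twists of the relevant curves satisfy the hypotheses of Theorem~\ref{level2rank0}. So there is no ``paper's own proof'' against which to compare your proposal.

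That said, your sketch is the standard route and is broadly correct. A few small points worth tightening. First, your appeal to Bump--Friedberg--Hoffstein and Murty--Murty for the existence of a suitable $K$ is fine, but historically Kolyvagin's original paper predated those results and relied on work of Waldspurger (combined with explicit checks) for the auxiliary non-vanishing; either citation suffices now. Second, the surjectivity hypothesis on the mod-$p$ Galois representation excludes finitely many primes, so you should say a word about how finiteness of $\sh(E/\mathbb{Q})[p^\infty]$ at the remaining primes is handled (e.g.\ via the refinements of Kolyvagin's method that weaken the big-image hypothesis, or by invoking the Cassels--Tate pairing once one knows $\sh$ is finite away from a finite set). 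Third, you are implicitly using modularity of $E$ to have a modular parametrisation $X_0(N)\to E$ and hence Heegner points at all; this is now unconditional by Wiles, Taylor--Wiles, and Breuil--Conrad--Diamond--Taylor, but deserves a mention. With those caveats your outline is sound, and honestly citing Gross's exposition for step three is exactly what one should do at this level.
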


It follows from Theorems \ref{thm:non_vanishing_twist} and \ref{thm:kolyvagin} that there are infinitely many twists of $X_0(49)$ satisfying the hypotheses of Proposition \ref{prop:twists_X049}. For each such curve, by Chebotarev's density theorem, there are infinitely many primes for which the proposition holds.

We now see how Corollary \ref{cor:j0automorphism} also provides us with an alternative proof of Theorem \ref{thm:nonequality}. Consider the elliptic curve $E$ with LMFDB label 36.a3 (see \cite[\href{http://www.lmfdb.org/EllipticCurve/Q/36/a/3}{36.a3}]{lmfdb}), which has reduced minimal equation
\begin{equation*}
\mathcal{E}\colon y^2=x^3-27.
\end{equation*}
We have $\mathcal{E}(\mathbb{Z})=\mathcal{E}(\mathbb{Z})[2]=\{O,(3,0)\}$. It follows that every quadratic twist $E^d$ of $E$ by a non-zero square-free integer $d$ satisfies $E^d(\mathbb{Q})[2]\cong \mathbb{Z}/2\mathbb{Z}$.
The equation
\begin{equation*}
\mathcal{E}^d\colon y^2=x^3-27d^3
\end{equation*}
has discriminant equal to $-2^4\cdot 3^9\cdot d^6$ and is hence globally minimal, except if $3\mid d$, in which case we apply $(x,y)\mapsto (9x,27y)$ to obtain a minimal model. Thus, the point of exact order $2$ of $E^d$ defined over $\mathbb{Q}$ is integral. Therefore, by Theorems \ref{thm:non_vanishing_twist} and \ref{thm:kolyvagin}, there exist infinitely many $d$ for which Corollary \ref{cor:j0automorphism} holds with $y_0=0$.

\subsection{A necessary condition: quadratic saturation}
\label{sec:quad_sat}
In \S\S \ref{sec:automorphisms}, \ref{sec:other_characters} we proved sufficient conditions for a point in $\mathcal{X}(\Z_p)$ to belong to $\mathcal{X}(\Z_p)_2$. We now prove the necessary condition given by Theorem \ref{thm:extrapointsintro}, which we restate here for the reader's convenience.
\begin{thm}
\label{thm:extrapoints} 
Let $E/\Q$ and $p$ be as in Theorem \ref{level2rank0}. Suppose that $z\in \mathcal{X}(\mathbb{Z}_p)_2\setminus \mathcal{X}(\mathbb{Z})$. Then $z$ is the localisation of a torsion point $P$ over a number field $K$ and, for each rational prime $q$, the value $\lambda_{\mathfrak{q}}(P)$ of the local height $\lambda_{\fq}$ is independent of the prime $\mathfrak{q}\mid q$ of $K$.
\end{thm}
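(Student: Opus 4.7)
The plan is to extract structural information from the hypothesis and exploit it together with the vanishing of $p$-adic heights on torsion points. First, the defining equation $\Log(z)=0$ of $\mathcal{X}(\Z_p)_2$ forces $z$ to be torsion in $\mathcal{E}(\overline{\Z})_{\tors} = E(\overline{\Q})_{\tors}$, so we may write $z = \rho(P)$ for some torsion point $P \in \mathcal{X}(\mathcal{O}_K)$ over a number field $K$, with $\rho\colon K\hookrightarrow \Q_p$ inducing the prime $\mathfrak{p}\mid p$ underlying $z$. Via the reformulation of Remark \ref{rmk:globaltorsion}, the remaining condition $2D_2(z)+\|w\|=0$ becomes $\lambda_{\mathfrak{p}}(P) = -\|w\|$ for some $w\in W$, which puts $\lambda_{\mathfrak{p}}(P)$ in the very restricted subgroup $\sum_{q\in S}\Q\log q \subset \Q_p$.

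Next I would express every local height of $P$ in terms of a single algebraic number. Writing $n$ for the order of $P$ and taking $m=n+1$, so that $mP=P$, the quasi-quadratic identity (property \ref{property:quasiquadratic} at $v\nmid p$ and property \ref{prop:transformationunderm} at $v\mid p$) rearranges to
\[
\lambda_v(P) = \tfrac{2\log|\alpha|_v}{n^2+2n} \quad (v\nmid p), \qquad \lambda_v(P) = \tfrac{2\,\tr_{K_v/\Q_p}(\log_v\alpha)}{n_v(n^2+2n)} \quad (v\mid p),
\]
where $\alpha := f_{n+1}(P)\in K^{\times}$. Thus the full collection of cyclotomic local heights of $P$ is encoded in the idele of the single element $\alpha$, and the desired conclusion becomes the statement that the normalised $\mathfrak{q}$-valuations (and, above $p$, the normalised local log-traces) of $\alpha$ are independent of the prime of $K$ lying above each rational prime.

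The final step is to combine the hypothesis $\lambda_{\mathfrak{p}}(P) = -\|w\|$ with the vanishing $h_p^{\chi}(P) = 0$ (from $P$ torsion) for every idele class character $\chi$ of $K$, as classified in \S\ref{sec:other_characters} and property \ref{prop:fund_units}, producing via non-cyclotomic characters (constructed using Lemma \ref{lemma:extending_characters}) a system of $\Q_p$-linear relations on the components of the idele of $\alpha$. Exploiting the $\Q$-linear independence of the $p$-adic logarithms $\{\log q\}_{q\ne p\text{ prime}}$ in $\Q_p$ --- which follows from the fact that the kernel of $\log_p$ intersects $\Q^{\times}$ only in $\pm p^{\Z}$ --- the hypothesis decomposes into one rational equation per prime $q$, and combining with the classification of possible values of $\lambda_{\mathfrak{q}}(P)$ from Propositions \ref{propositionheightstamagawa1} and \ref{localheightsnontrivialtamagawa} yields the required independence. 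The main obstacle will be precisely this final decomposition step: the space of idele class characters of $K$ has dimension only $r_2+1$ (under Leopoldt's conjecture), which a priori is smaller than the number of relations one needs to extract, so the argument must additionally leverage the special algebraic form of $\alpha=f_{n+1}(P)$ as the value of a division polynomial at a torsion point, together with the fact that $\lambda_{\mathfrak{p}}(P)$ is pinned in the distinguished finite set $\{-\|w\|:w\in W\}$ rather than in the larger $\Q$-span $\sum_{q\in S}\Q\log q$.
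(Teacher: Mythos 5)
Your reduction is sound up to the last step, and the device of taking $m=n+1$ (so that $mP=P\neq O$) is a genuinely nice variant of the paper's argument: quasi-quadraticity then gives at once $\lambda_v(P)=\tfrac{2}{m^2-1}\log|f_m(P)|_v$ for $v\nmid p$, and its trace analogue above $p$, so every local height of $P$ is encoded in the single element $\alpha=f_{n+1}(P)\in K^{\times}$. The paper reaches the same kind of statement less directly, by expanding $-x(mR)/y(mR)$ and $f_m(R)$ to first order at $P$ (with $m$ the exact order, so $mP=O$) and taking a limit of the quotient of leading coefficients; your trick avoids the local expansions and the limit altogether.

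The final step, however, has a genuine gap, and the obstacle you flag (too few idele class characters) is a symptom of a deeper problem: the character relations you propose to use are vacuous. Once each local height is expressed through $\alpha\in K^{\times}$ as above, the identity $h_p^{\chi}(P)=0$ for \emph{any} idele class character $\chi$ collapses (up to the choice of branch at $\fp\mid p$) to $\chi(\alpha)=0$, which holds automatically because $\chi$ is trivial on $K^{\times}$. So no amount of non-cyclotomic characters, nor Lemma \ref{lemma:extending_characters}, can produce new constraints on the individual components of the idele of $\alpha$. The only non-tautological input is the single equation $\lambda_{\fp_0}(P)=-||w||=-\sum_{q\in S}\alpha_q\log q$ at the one prime $\fp_0$ of residue degree $1$ determined by $z$, and the whole theorem must be extracted from it. The correct use of the fact you cite about $\ker(\log)$ is not to ``decompose the hypothesis into one equation per prime $q$'' but to apply it to $\alpha$ itself: the hypothesis reads $\log\bigl(\psi(\alpha)^{2d}\bigr)=\log\bigl(\prod_{q\in S}q^{-d(m^2-1)\alpha_q}\bigr)$ after clearing denominators by $d$, and since $\ker(\log)\subset\Q_p^{\times}$ is generated by $p$ and the roots of unity, this forces the \emph{global algebraic} identity $\alpha^{2d}=\zeta\, r$ with $\zeta$ a root of unity in $K$ and $r\in\Q^{\times}$ (a power of $p$ can be absorbed into $r$ since $\log p=0$). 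Independence of $\lambda_{\fq}(P)$ from the choice of $\fq\mid q$ is then immediate from your own formulae, because $\log|r|_{\fq}=\log|r|_q$ for $r\in\Q^{\times}$ and $\log_{\fp}(\zeta)=0$. This is exactly the paper's mechanism; substituting it for your character argument closes the proof, and with your $m=n+1$ reduction the result is arguably cleaner than the original.
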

\begin{proof}
As we observed at the beginning of this section, a point $z\in \mathcal{X}(\Z_p)_{2}$ is necessarily the $p$-adic localisation of a point $P\in \mathcal{E}(\overline{\Z})_{\tors}$. Let $K$ be the minimal number field over which the coordinates of $P$ are defined.
Let $m\in\mathbb{Z}$, $m\neq 0$, such that $mP=O$. Since $z\in \mathcal{X}(\Z_p)$, there exists an embedding $\psi$ of $K$ into $\Q_p$ under which $P$ is mapped to $z$, i.e.\ a prime $\fp_0$ of $K$ such that $\fp_0\mid p$ and
\begin{equation*}
\lambda_{\fp_0}(P) = \lambda_{p}(z) = -||w|| \equalscolon -\sum_{q\in S}\alpha_q \log q, \quad \text{for some } w\in W, \alpha_q\in \Q.
\end{equation*}
For any prime $\fp\mid p$ of $K$, the value $\lambda_{\fp}(P)$ can be computed as follows. Let $x(t),y(t)\in K[[t]]$ be coordinates around $P$, i.e.\ $P = (x(0),y(0))$, the power series $x(t),y(t)$ converge in the intersection over $\fp\mid p$ of small enough $\fp$-adic neighbourhoods of $P$ and $t$ vanishes to order $1$ at $P$. Let $Q(t) = m(x(t),y(t))\in E(K((t)))$. Since $K[[t]]$ is  a complete DVR with residue field $K$, by \cite[Proposition 1]{wuthrichheights} the $t$-adic valuation of $-x(Q(t))/y(Q(t))$ equals the one of $f_m(x(t),y(t))$. More precisely, since $f_m$ vanishes to order $1$ at every point of order dividing $m$, we have
\begin{align*}
-x(Q(t))/y(Q(t)) = at + O(t^2), \qquad \text{some}\ a\in K^{\times}\\
f_m(x(t),y(t)) = ct+ O(t^2), \qquad \text{some}\ c\in K^{\times}.
\end{align*}
Since $\sigma_p^{(\gamma)}(T) = T+O(T^2)$, by \S \ref{sec:heightsabovep}\thinspace\ref{prop:transformationunderm} we then have
\begin{align*}
\lambda_{\fp}(P) &= \lim_{t\to 0} {-\frac{2}{n_{\fp}m^2}\tr_{K_{\fp}/\Q_p}\left(\log_{\fp}\left(\frac{\sigma_p^{(\gamma)}(-x(Q(t))/y(Q(t)))}{f_m(x(t),y(t))}\right)\right)}\\
& = -\frac{2}{n_{\fp}m^2}\tr_{K_{\fp}/\Q_p}\log_{\fp}\left(\frac{a}{c}\right),
\end{align*}
where $\log_{\fp}$ is an extension of $\log$ to $K_{\fp}^{\times}$.

In particular, if $d$ is the least common multiple of the denominators of the $\alpha_q$, then
\begin{equation*}
\log\left(\psi\left(\frac{a}{c}\right)^{2d}\right) = \log \biggl(\prod_{q\in S} q^{d\alpha_q m^2}\biggr).
\end{equation*}
Since $\fp$ is a prime of good reduction, we also have $\ordnop_{\fp}(a/c) = 0$ (strictly speaking we could also avoid using this fact, since the branch of the logarithm corresponding to the cyclotomic character vanishes at $p$), so 
\begin{equation*}
\left(\frac{a}{c}\right)^{2d} = \zeta  \prod_{q\in S} q^{d\alpha_q m^2}
\end{equation*}
for some root of unity $\zeta\in K$. Thus
\begin{equation*}
\lambda_{\fp}(P)  = -\frac{1}{dn_{\fp}m^2}\tr_{K_{\fp}/\Q_p}\log_{\fp}\biggl( \zeta  \prod_{q\in S} q^{d\alpha_q m^2}\biggr) =  -\sum_{q\in S}\alpha_q \log q
\end{equation*}
is independent of $\fp$.

Let now $\fq$ be a prime not above $p$. By \S\ref{sec:heightsawayfromp}\thinspace\ref{property:limit},\ref{property:quasiquadratic}, we have
\begin{equation*}
\lim_{R\to P}\frac{1}{m^2}(\lambda_{\fq}(mR)+2\log|f_m(R)|_{\fq}) = \lim_{R\to P} \lambda_{\fq}(R) = \lambda_{\fq}(P).
\end{equation*}

Since $mR$ is in the formal group at $\fq$, then
\begin{align*}
\lambda_{\fq}(P) &= \lim_{R\to P}\frac{1}{m^2}\left(\log|x(mR)|_{\fq}|f_m(R)|_{\fq}^2\right)\\
& = \lim_{R\to P}\frac{1}{m^2}\log\left(\left|\frac{x(mR)}{y(mR)}\right|^{-2}_{\fq}|f_m(R)|_{\fq}^2\right)\\
&= \frac{1}{m^2}\log\left(\left\vert\frac{c}{a}\right\vert_{\fq}^2\right)= \frac{1}{d}\log\left(\big|\prod_{q\in S} q^{-d\alpha_q}\big|_{\fq}\right),
\end{align*}
which completes the proof.
\end{proof}
\begin{cor}
If $z\in \mathcal{X}(\mathbb{Z}_p)_2\setminus \mathcal{X}(\mathbb{Z})$ is the localisation of a point $P$ defined over $K$, we have $P\in \mathcal{X}(\OO_K)$.
\end{cor}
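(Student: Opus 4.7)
The plan is to show directly that $P$ is $\fv$-integral at every prime $\fv$ of $K$, by matching the information provided by Theorem \ref{thm:extrapoints} with the characterisations of local $p$-adic height values developed in \S \ref{padicheight}. The primes of $K$ split naturally into three classes: good primes not above $p$, bad primes (those above $S$), and primes above $p$.

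The decisive case is a prime $\fq$ of $K$ above a rational prime $q\notin S\cup\{p\}$. The explicit formula derived in the proof of Theorem \ref{thm:extrapoints},
$$
\lambda_\fq(P)=\frac{1}{d}\log\left|\prod_{q'\in S}(q')^{-d\alpha_{q'}}\right|_\fq,
$$
vanishes because each $q'\in S$ is a $\fq$-unit. Since $E$ has good reduction at $q$, the model $\mathcal{E}$ is smooth (hence minimal) over $\OO_{K,\fq}$, so Lemma \ref{lemmalocalheights}\thinspace\ref{goodreduction} gives $\lambda_\fq(P)=\log\max\{1,|x(P)|_\fq\}$; combined with the vanishing this forces $|x(P)|_\fq\leq 1$, and the integral Weierstrass equation then also gives $|y(P)|_\fq\leq 1$. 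At a prime $\fq$ above $q\in S$, Theorem \ref{thm:extrapoints} gives $\lambda_\fq(P)=w_q\in W_q$, and Propositions \ref{propositionheightstamagawa1}--\ref{localheightsnontrivialtamagawa} identify these values as precisely those attained on the $\fq$-integral points of $\mathcal{X}$, once the change-of-model correction \eqref{lambdaqlambdaqmin} from $\mathcal{E}$ to its minimal model over $\OO_{K,\fq}$ has been taken into account. Finally, at a prime $\fp\mid p$, the hypothesis $z\in\mathcal{X}(\mathbb{Z}_p)$ already gives integrality of $P$ at the distinguished prime $\fp_0$ coming from $\rho$; the identity $(a/c)^{2d}=\zeta\prod_{q\in S}q^{d\alpha_q m^2}$ in $K^\times$ from the proof of Theorem \ref{thm:extrapoints} shows $\ordnop_\fp(a/c)=0$ for every $\fp\mid p$, and injectivity of the reduction map on prime-to-$p$ torsion at good primes propagates integrality from $\fp_0$ to the remaining $\fp$ whenever the order of $P$ is coprime to $p$.

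The main obstacle will be the case of a prime $\fp\mid p$ when $p$ divides the order of $P$, where the injectivity-of-reduction argument breaks down. In that situation one must push the $\fp_0$-integrality across to each other $\fp$ using the sigma function expression for $\lambda_\fp$ from Proposition \ref{sigmadilogarithm}, exploiting the fact that its value is forced by Theorem \ref{thm:extrapoints} to be the same at every prime above $p$; the compatibility between this constant value and the formal-group formula for $\lambda_\fp$ must then be unpicked to rule out $P$ lying in the formal group at any $\fp$. A separate but routine technical point is the comparison of heights in $\mathcal{E}$ with those in the minimal model over $\OO_{K,\fq}$ at bad primes, which is handled once the correction $\tfrac{1}{6}\log|\Delta/\Delta^{\min}|_\fq$ in \eqref{lambdaqlambdaqmin} is tracked through Proposition \ref{localheightsnontrivialtamagawa}.
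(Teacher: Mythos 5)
Your case analysis at primes away from $p$ is sound and is essentially the mechanism of Proposition \ref{prop:integralequalrational} (which is exactly what the paper's proof invokes), but two points need tightening. At a bad prime $\fq$ the inference is stated backwards: knowing that $\lambda_{\fq}(P)$ lies in the set of values attained on $\fq$-integral points does not by itself make $P$ integral. What you need is that a \emph{non}-integral point lies in the formal group at $\fq$, so that $\lambda_{\fq}^{\min}(P)=\log|x(P)|_{\fq}$ is a strictly positive rational multiple of $\log q$, whereas the value forced by Theorem \ref{thm:extrapoints} is a non-positive rational combination of $\log q'$, $q'\in S$; one then concludes via the description of the kernel of the $p$-adic logarithm, exactly as in Proposition \ref{prop:integralequalrational}. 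The same remark applies to your good-prime step, where passing from $\log\max\{1,|x(P)|_{\fq}\}=0$ to $|x(P)|_{\fq}\leq 1$ silently uses $\log q\neq 0$ in $\mathbb{Q}_p$ for $q\neq p$.

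The genuine gap is the case you yourself flag and then leave open: a prime $\fp\mid p$ when $p$ divides the order of $P$. You describe a programme (``unpick the compatibility\dots to rule out $P$ lying in the formal group'') but give no argument, and the sign/kernel argument is unavailable there precisely because $\log p=0$. The missing ingredient --- and the one input of the paper's proof that you never invoke --- is the classical integrality theorem for torsion points, \cite[VIII, Theorem 7.1]{silvermanAEC}: a torsion point can fail to be integral at $\fq$ only if its order is $q^n$ (with $q$ the residue characteristic) and $\ordnop_{\fq}(q)\geq q^n-q^{n-1}$. This is what the paper combines with the Proposition \ref{prop:integralequalrational}-style computation to control the remaining cases; in particular it shows that non-integrality at $\fp\mid p$ would force $\fp$ to be ramified of index at least $p-1$ and $P$ to have $p$-power order, so the problem you identify cannot be circumvented by the reduction-map argument alone. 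As written, your proof is incomplete exactly where you locate the difficulty.
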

\begin{proof}
The proof is similar to that of Proposition \ref{prop:integralequalrational}. Note that a torsion point defined over an arbitrary number field can fail to be integral at $\mathfrak{q}$ only if its order is $q^n$ for some $n$ (where $q$ is the norm of $\mathfrak{q}$) and $\ordnop_{\mathfrak{q}}(q)\geq q^n-q^{n-1}$ (cf.\ \cite[VIII, Theorem 7.1]{silvermanAEC}).
\end{proof}
Theorem \ref{thm:extrapoints} is in some sense a natural analogue of a conjecture of Stoll for the classical abelian Chabauty method \cite[Conjecture 9.5]{stoll:findesc8}, which appears in an unpublished draft of \cite{StollFiniteDescent}. Let us restrict to the case when $C$ is a hyperelliptic curve over $\mathbb{Q}$ of genus $g$, whose Jacobian $J$ has rank $g-2$ over $\mathbb{Q}$ (for the conjecture in its full generality see \cite{stoll:findesc8}). Suppose that $\iota\colon C\hookrightarrow J$ is an embedding such that $\iota(C)$ generates $J$ and that $J$ is simple. Stoll's conjecture predicts the existence of a finite subscheme $Z\subset J$ and a set $R$ of primes which has density $1$ in the set of all primes such that, for each $\ell \in R$, we have $\overline{J(\mathbb{Q})}\cap \iota(C(\mathbb{Q}_{\ell}))\subset Z(\mathbb{Q}_{\ell})$, where $\overline{J(\mathbb{Q})}$ is the $\ell$-adic closure of $J(\mathbb{Q})$ in $J(\mathbb{Q}_{\ell})$.

In \cite{WIN4} some evidence for Stoll's conjecture was collected when $g=3$, with the scheme $Z$ being the intersection of $\iota(C)$ with the saturation of $J(\mathbb{Q})$, that is
\begin{equation*}
Z = \{\iota(P)\in J: n\iota(P)\in J(\mathbb{Q}) \text{ for some }n\in\mathbb{Z}_{\geq 1} \}.
\end{equation*}

In our setting of an elliptic curve of rank $0$, it is clear that $\mathcal{X}(\mathbb{Z}_p)_2$ should be contained in the saturation of the Mordell--Weil group $E(\mathbb{Q})$, since $E(\mathbb{Q})=E(\mathbb{Q})_{\mathrm{tors}}$ and the equation $\Log z =0 $ cuts out the torsion points in $E(\mathbb{Q}_p)$. However, this is not a very strong requirement in the elliptic curve case, because the curve and the Jacobian are identified.

Theorem \ref{thm:extrapoints} asserts that the extra constraint $\lambda_p(z)=-||w||$, for some $w\in W$, leads to another type of ``saturation'', in the sense that we have to consider those points for which the local heights behave as if the point were defined over $\mathbb{Q}$.

Note that Stoll's conjecture assumes that $g-r+l\geq 3$, where $l=1$ is the level, in the sense that the Chabauty--Coleman method computes the cohomologically global points of level $1$. In the situation discussed here we have $g=1$, $r=0$, $l=2$, i.e.\ $g-r+l=3$, so one could naively hope that for rank $1$ similar conjectures could be formulated at level $3$.
\section{Algorithm and computations in rank $0$}
\label{sec:Computations}
\subsection{The algorithm}
\label{sec:algorithmelliptic}
We wish to explicitly compute the sets $\phi(w)$ and $\psi(w)$ from Theorems \ref{level2rank0} and \ref{level2rank1}. Each of $D_2(z)$ and $\Log(z)$ are locally analytic functions on $\mathcal{X}(\mathbb{Z}_p)$. In other words, given a point $\overline{P}\in\overline{E}(\mathbb{F}_p)\setminus\{O\}$ and a fixed point $P\in \mathcal{X}(\mathbb{Z}_p)$ reducing to $\overline{P}$ modulo $p$, one can pick a uniformiser $t\in \mathbb{Q}_p(E)$ at $P$, which reduces to a uniformiser at $\overline{P}$. Then, for each $Q\in \mathcal{X}(\mathbb{Z}_p)$ in the residue disk of $P$, we have
\begin{equation*}
\Log(Q) = f_{P}(t(Q))\quad  \text{and}\quad  D_2(Q) = g_P(t(Q))
\end{equation*}
for some $f_P(x),g_P(x)\in\mathbb{Q}_p[[x]]$ convergent at all $x\in\mathbb{Z}_p$ with $|x|_p<1$.

On the other hand, let $\gamma=C$ if $p$ is of good ordinary reduction and $\gamma=\frac{a_1^2+4a_2}{12}$ if $p$ is of good supersingular reduction. By Proposition \ref{sigmadilogarithm} and \S\ref{sec:heightsabovep}\thinspace\ref{prop:transformationunderm}, provided that $mQ\neq O$, where $m=\#\overline{E}(\mathbb{F}_p)$, then
\begin{equation}
\label{rearrangingsigma}
2D_2(Q) + \gamma\Log(Q)^2= -\frac{2}{m^2}\log\left(\frac{\sigma_p^{(\gamma)}(mQ)}{f_m(Q)}\right).
\end{equation}
Since there are finitely many\footnote{In fact, at most one.} points in each residue disk satisfying $mQ=O$, the local expansion of the right hand side of (\ref{rearrangingsigma}) in terms of the local parameter $t$ holds in the whole residue disk. In fact, the local expansion of $\sigma
_p^{(\gamma)}(mQ)$ and $f_m(Q)$ have precisely the same zeros with the same multiplicity $1$ and two $p$-adic power series which agree at infinitely many points in $\Z_p$ of absolute value less than $1$ are equal by the $p$-adic Weierstrass Preparation Theorem \cite[Ch.\ IV, \S 4, Theorem 14]{Koblitz}. Note that we already used this in the proof of Theorem \ref{thm:extrapoints}.
 
By the same observation as in Remark \ref{rmk:globaltorsion}, we obtain a way of computing the intersections of $\phi(w)$ and $\psi(w)$ with each residue disk using local expansions of the $p$-adic sigma function (of Mazur--Tate or Bernardi) and the $m$-th division polynomial\footnote{Computationally, it is more convenient to take $m$ to be the order of $\overline{P}$ in $\overline{E}(\mathbb{F}_p)$, i.e.\ to choose potentially different values of $m$ for different residue disks.}, in place of the double Coleman integral $D_2(z)$.

The function $\Log\colon \mathcal{X}(\mathbb{Z}_p)\to \mathbb{Q}_p$ is odd; the function $\lambda_p\colon \mathcal{X}(\mathbb{Z}_p)\to \mathbb{Q}_p$ is even (cf.\ property \ref{prop:transformationunderauto} in \S \ref{sec:heightsabovep}). Therefore, 
\begin{equation*}
z\in \phi(w) \iff -z\in\phi(w)
\end{equation*}
and it will thus suffice to consider residue disks up to $\overline{P}\mapsto -\overline{P}$. The same holds for $\psi(w)$.

We also notice that different models can be used for computing the $p$-adic heights and the single Coleman integrals. In fact, we defined local $p$-adic heights using an integral minimal model and, for instance, there is an implementation for the Mazur--Tate $p$-adic sigma function in \texttt{SageMath} due to Harvey (see \cite{harvey} and also \cite{MST}). On the other hand, for Coleman computations on \texttt{SageMath} (see \cite{ExplicitColeman}), one requires the elliptic curve to be described by a Weierstrass model whose $a_1$ and $a_3$ coefficients are zero and there is no requirement on minimality; the only requirement on integrality is $\mathbb{Z}_p$-integrality. To avoid explicit Coleman integration computations, we could also work directly with the formal logarithm.

\subsection{Examples for Section \ref{sec:obstructions}}
In the examples that follow, as well as in the ones of the next sections, we avoid making distinctions between the curve $E/\mathbb{Q}$ and the model $\mathcal{E}/\mathbb{Z}$. The Weierstrass equations that we work with are always minimal and reduced, unless stated otherwise. 
\begin{example}[Corollary \ref{cor:semistablecaseautomorphism}, Proposition \ref{prop:anticyclotomiccyclotomic}]
\label{Example:1}
Consider the rank $0$ elliptic curve \href{http://www.lmfdb.org/EllipticCurve/Q/17/a/1}{17.a1}\cite{lmfdb}
\begin{equation}
\label{weierstrasseq:example1}
E\colon y^2 + x y + y = x^{3} -  x^{2} - 91 x - 310
\end{equation}
and the prime $p=5$ of good ordinary reduction.

Since none of the conditions of Theorem \ref{level2rank0}\thinspace (\ref{trivial}) are satisfied, we need to explicitly compute $\mathcal{X}(\mathbb{Z}_p)_2$ as a union of $\phi(w)$.
The curve has split multiplicative reduction at $17$ with Kodaira symbol $\mathrm{I}_1$ and good reduction everywhere else: thus, $W=\{0\}$. We find
\begin{equation*}
\mathcal{X}(\mathbb{Z}_p)_2=\{ (-5,2\pm \rho(i))\},
\end{equation*}
where $\rho\colon \mathbb{Z}[i]\hookrightarrow \mathbb{Z}_p$ is a fixed embedding. If a priori our computations only return approximations of $p$-adic points, by Corollary \ref{cor:semistablecaseautomorphism} the $p$-adic points found are the localisations of the points over $\mathbb{Z}[i]$ listed above. Let us nevertheless explain it in detail for this example.

The Weierstrass equation (\ref{weierstrasseq:example1}) defines a global minimal model also for the base-change $E/\mathbb{Q}(i)$ and the prime $p$ splits in $K=\mathbb{Q}(i)$. We extend $\rho$ to a map $E(K)\hookrightarrow E(\mathbb{Q}_p)$. The point $$Q = (-5,2+ i)\in E(K)$$ is integral with respect to the global minimal model above and satisfies $$4Q = O.$$  
Thus, since the reduction types at the bad primes of $E/\mathbb{Q}(i)$ are the same as over $\mathbb{Q}$, we have
\begin{equation*}
0 = h_p(Q) = \frac{1}{2}(\lambda_{\mathfrak{p}_1}(Q)+\lambda_{\mathfrak{p}_2}(Q)),
\end{equation*}
where $p\mathbb{Z}[i]=\mathfrak{p}_1\mathfrak{p}_2$ and explicitly (without loss of generality)
\begin{equation*}
\lambda_{\mathfrak{p}_1}(Q)=\lambda_p(\rho(Q)),\qquad \lambda_{\mathfrak{p}_2}(Q)=\lambda_p(\rho(\tau(Q))),
\end{equation*}
where $\mathrm{Gal}(K/\mathbb{Q})=\braket{\tau}$, so $\tau(Q)=(-5,2-i)$. On the other hand, $\tau(Q)=-Q$ and $\lambda_p$ is an even function. Therefore 
\begin{equation*}
0 = \lambda_p(\rho(Q))=\lambda_p(\rho(\tau(Q))).
\end{equation*}
Note that Proposition \ref{prop:anticyclotomiccyclotomic} would also explain why the $p$-adic  localisation of the point $Q$ belongs to $\mathcal{X}(\mathbb{Z}_p)_2$.
\end{example}

\begin{example}[Proposition \ref{prop:anticyclotomiccyclotomic}, Proposition \ref{automorphismeffect}\thinspace (\ref{firststatement})]
\label{Example:2}
Consider the elliptic curve \href{http://www.lmfdb.org/EllipticCurve/Q/121/d/3}{121.d3}\cite{lmfdb}
\begin{equation}
\label{weierstrasseq:example2}
E\colon y^2 + y = x^{3} -  x^{2} - 40 x - 221
\end{equation}
and the prime $p=5$, at which $E$ has good ordinary reduction. We have
\begin{equation*}
\#\overline{E}(\mathbb{F}_2)=1
\end{equation*}
and thus $$\mathcal{X}(\mathbb{Z}_p)_2=\emptyset$$ by Theorem \ref{level2rank0}\thinspace (\ref{trivial}). On the other hand, we can still compute $\bigcup_{w\in W}\phi(w)$ of Theorem \ref{level2rank1}\thinspace (\ref{nontrivial}). The curve has additive reduction of type $\mathrm{I}_1^*$ at $11$ with Tamagawa number $2$ and has good reduction everywhere else. Therefore,
\begin{equation*}
W =\{0,-\log 11\}.
\end{equation*}
We find that $\phi(0) =\emptyset$, but
\begin{equation*}
\phi(-\log 11)=\left\{\bigg(-7,\rho\Big(\frac{-1\pm 11\sqrt{-11}}{2}\Big)\bigg),\left(4,\rho\Big(\frac{-1\pm 11\sqrt{-11}}{2}\Big)\right)\right\},
\end{equation*}
where $\rho\colon \mathbb{Z}[(1+\sqrt{-11})/2]\hookrightarrow \mathbb{Z}_p$ is a fixed embedding.

Let $K=\mathbb{Q}(\sqrt{-11})$. The prime $11$ ramifies in $K/\mathbb{Q}$ and $E/K$ has split multiplicative reduction of type $\mathrm{I}_2$ at $\mathfrak{q}$, where $\mathfrak{q}^2=11$.
Let $$Q\in\left\{ \left(-7,\frac{-1\pm 11\sqrt{-11}}{2}\right),\left(4,\frac{-1\pm 11\sqrt{-11}}{2}\right)\right\}\subset E(K).$$

The point $Q$ has order $5$. Unlike in Example \ref{Example:1}, the Weierstrass equation (\ref{weierstrasseq:example2}) is minimal at all primes except at $\mathfrak{q}$ and hence we cannot use straightforwardly the explicit formulae for the local height at $\mathfrak{q}$ given in \S \ref{sec:heightsawayfromp}.
The curve $E/K$ admits the global minimal model
\begin{equation*}
E^{\min{}}\colon y^2 + y = x^{3} -  x^{2}
\end{equation*}
and the image of $Q$ in $E^{\min}(K)$ has good reduction at $\mathfrak{q}$, so that $\lambda_{\mathfrak{q}}^{\min}(Q)=0$.
Equation (\ref{lambdaqlambdaqmin}) then yields $\lambda_{\mathfrak{q}}(Q)=-\log 11$.
Therefore, by Proposition \ref{prop:anticyclotomiccyclotomic}, we have
\begin{align*}
0 = \lambda_p(\rho(Q))+\lambda_{\mathfrak{q}}(Q)=\lambda_p(\rho(Q))-\log 11.
\end{align*}
Similarly to Example \ref{Example:1}, the appearance of $\rho(Q)$ in $\phi(-\log{11})$ is also justified by Proposition \ref{automorphismeffect}\thinspace (\ref{firststatement}).
\end{example}

\begin{example}[Remark \ref{rmk:j1728}]
\label{example:j1728}
Consider the elliptic curve \href{http://www.lmfdb.org/EllipticCurve/Q/14112/q/1}{14112.q1}\cite{lmfdb} 
\begin{equation}
\label{weierstrasseq:example4}
E\colon y^2 = x^{3} - 9261 x
\end{equation}
and the prime $p=5$, which is of good ordinary reduction. Note that $p$ splits in $K=\mathbb{Q}(\sqrt{21})$ and by Remark \ref{rmk:j1728}, the localisations of the point $Q^{\pm}=(\pm 21\sqrt{21},0)$ belong to $\mathcal{X}(\mathbb{Z}_p)_2$ provided that $(1/2)$ times the sum of its local heights at bad primes is in $||W||$. Both at $3$ and $7$, the curve has bad reduction of additive type $\mathrm{III}^*$ with Tamagawa number $2$; at $2$ the curve has reduction of type $\mathrm{III}$ with Tamagawa number $2$. Thus, $W=W_2\times W_{3}\times W_7$, with $W_q = \{0,-\frac{3}{2}\log q\}$ for each $q\in\{3,7\}$ and $W_2=\{0,-\frac{1}{2}\log 2\}$.

A global minimal model for the base-change of $E$ to $K$ is given by $y^2=x^3-21x$. Furthermore, $2$ is inert in $K$ and its reduction type does not change. The primes $3$ and $7$ become of type $\mathrm{I}_0^*$ with Tamagawa number $4$. By Proposition \ref{localheightsnontrivialtamagawa} and Proposition \ref{automorphismeffect}\thinspace(\ref{firststatement}) (see also Remark \ref{rmk:j1728}), we then find that $Q^{\pm}$ is indeed in $\mathcal{X}(\mathbb{Z}_p)_2$.

Our computation of $\mathcal{X}(\mathbb{Z}_p)_2$ recovers precisely the integral points and the ones coming from $Q^{\pm}$.
\end{example}

\begin{example}[Proposition \ref{automorphismeffect}\thinspace (\ref{firststatement})]
\label{Example:3}
Consider the elliptic curve \href{http://www.lmfdb.org/EllipticCurve/Q/11025/y/2}{11025.y2}\cite{lmfdb}, 
whose reduced minimal model is
\begin{equation*}
E\colon y^2+y = x^3+15006
\end{equation*}
and let $p=13$, which is the smallest prime of good ordinary reduction for $E$. Note that $E$ has vanishing $j$-invariant.
We find that
\begin{equation}
\label{XZp211025j1}
\mathcal{X}(\mathbb{Z}_p)_2 = \{\pm(0,122)\}\cup\{\pm(\zeta_3^{i}\sqrt[3]{120050},367): 0\leq i\leq 2\},
\end{equation}
where $\zeta_3$ is a primitive third root of unity and we assume that we have fixed an embedding of $\mathbb{Q}(\zeta_3,\sqrt[3]{120050})$ into $\mathbb{Q}_p$. As usual, the equality (\ref{XZp211025j1}) is deduced from computations combined with theoretical results. In this particular example, the theory needed is that the Galois group of $\mathbb{Q}(\zeta_3,\sqrt[3]{120050})/\mathbb{Q}$ acts on the the order-$6$ points $\pm(\zeta_3^{i}\sqrt[3]{120050},367)$ by automorphisms. We leave the reader to check that these points also have the right local heights at bad primes.
\end{example}

\begin{example}[Corollary \ref{cor:j0automorphism}, Corollary \ref{cor:biquadraticpoints}]
\label{Example:biquadratic}
Consider the elliptic curve \href{http://www.lmfdb.org/EllipticCurve/Q/900/g/3}{900.g3}\cite{lmfdb} 
with reduced minimal model
\begin{equation*}
E\colon y^2 = x^{3} - 3375
\end{equation*}
and the prime $19$, which is of good ordinary reduction for $E$. We have
\begin{equation*}
\mathcal{X}(\mathbb{Z}_{19})_2= \{(\zeta_3^i 15,0),(-\zeta_3^i 30, \pm\sqrt{-30375}):0\leq i\leq 2\}
\end{equation*}
where $\zeta_3$ is a primitive third root of unity and we assume that we have fixed an embedding of $\mathbb{Q}(\zeta_3,\sqrt{-30375})$ into $\mathbb{Q}_{19}$.
\end{example}

\subsection{Large-scale data}
\label{sunsec:largescaledata}
Using the database \cite{lmfdb}, we could run the code on all the $86,213$ elliptic curves over $\mathbb{Q}$ of rank $0$ and conductor less than or equal to $30,000$; for each curve we let $p$ be the smallest prime $\geq 5$ of good ordinary reduction\footnote{We could have allowed $p$ to equal $3$ and used the method of \cite{3adicheights} to compute the quantities involved in the $3$-adic heights. Some computations with supersingular primes were carried out for \S \ref{sec:variationprime}.}.

Out of these, we found exactly $470$ pairs $(E,p)$ for which $\mathcal{X}(\mathbb{Z}_p)_2\supsetneq \mathcal{X}(\mathbb{Z})$. The 10 such pairs with $E$ of conductor $\leq  100$ are listed in \textsc{Table} \ref{table:2}.

\begin{table}
\makebox[\textwidth][c]{\begin{tabular}{ |c|c|c|c|c|c|c| } 
\hline
LMFDB & $p$ & SS/CM & $\mathcal{X}(\mathbb{Z}_p)_2\setminus \mathcal{X}(\mathbb{Z})$ & Order & Explanation  \\
\hline
\href{http://www.lmfdb.org/EllipticCurve/Q/17/a/1}{17.a1} & $5$ & SS & $(-5,2\pm i)$ & $4$ & Cor.\thinspace \ref{cor:semistablecaseautomorphism}/Prop.\thinspace \ref{prop:anticyclotomiccyclotomic} \\
\hline
\href{http://www.lmfdb.org/EllipticCurve/Q/27/a/3}{27.a3} & $7$ & $j=0$ & $\pm\left(\frac{-3\pm 3\sqrt{-3}}{2},4\right)$ & $3$ & Cor.\thinspace \ref{cor:j0automorphism}/Prop.\thinspace \ref{prop:anticyclotomiccyclotomic}\\
\hline
\href{http://www.lmfdb.org/EllipticCurve/Q/32/a/4}{32.a4} & $5$ & $j=1728$ & $(-2,\pm 4i)$ & $4$ & Prop.\thinspace \ref{automorphismeffect}\thinspace (\ref{firststatement})/Prop.\thinspace \ref{prop:anticyclotomiccyclotomic}\\
\hline
\href{http://www.lmfdb.org/EllipticCurve/Q/36/a/3}{36.a3} & $7$ &  $j=0$ & $\left(\frac{-3\pm 3\sqrt{-3}}{2},0\right)$ & $2$ & Cor.\thinspace \ref{cor:j0automorphism}/Prop.\thinspace \ref{prop:anticyclotomiccyclotomic}\\
\hline
\href{http://www.lmfdb.org/EllipticCurve/Q/36/a/4}{36.a4} & $7$ & $j=0$ &  $\left(\frac{1\pm \sqrt{-3}}{2},0\right)$ & $2$  & Cor.\thinspace \ref{cor:j0automorphism}/Prop.\thinspace \ref{prop:anticyclotomiccyclotomic}\\
& & & $\pm (-1\pm \sqrt{-3},3)$ & $6$ & Cor.\thinspace \ref{cor:j0automorphism}/Prop.\thinspace \ref{prop:anticyclotomiccyclotomic}\\
\hline
\href{http://www.lmfdb.org/EllipticCurve/Q/49/a/2}{49.a2}  & $11$ & $j=-3375$ & $\pm\left(\frac{-7b^2+25}{2}, \frac{-49b^3 + 7b^2 - 49b - 25}{4}\right)$
& $4$ & Prop.\thinspace \ref{prop:twists_X049}\\
\hline 
\href{http://www.lmfdb.org/EllipticCurve/Q/49/a/4}{49.a4} & $11$ & $j=-3375$ & $\pm\left(\frac{b^2 - 3}{2}, \frac{b^3 - b^2 - 7b + 3}{4}\right)$
& $4$ &  Prop.\thinspace \ref{prop:twists_X049}\\
\hline
\href{http://www.lmfdb.org/EllipticCurve/Q/75/b/4}{75.b4} & $11$ &-  & $(27,-14\pm 5\sqrt{5})$ & $4$ & Prop.\thinspace \ref{automorphismeffect}\thinspace (\ref{firststatement})\\
 \hline
 \href{http://www.lmfdb.org/EllipticCurve/Q/75/b/6}{75.b6} & $11$ &- & $\left(12,\frac{-13\pm 25\sqrt{5}}{2}\right)$ & $4$ & Prop.\thinspace \ref{automorphismeffect}\thinspace (\ref{firststatement})\\
& & & $\left(2, -\frac{3}{2}(1\pm 5\sqrt{5})\right)$ &$4$ & Prop.\thinspace \ref{automorphismeffect}\thinspace (\ref{firststatement})\\ 
 \hline
\href{http://www.lmfdb.org/EllipticCurve/Q/75/b/7}{75.b7} & $11$ &- & $\left(2,\frac{1}{2}(-3\pm 5\sqrt{5})\right)$ & $4$   & Prop.\thinspace \ref{automorphismeffect}\thinspace (\ref{firststatement})\\
 \hline
\end{tabular}}\\
\caption{All curves of rank $0$ and conductor $\leq 100$ for which $\mathcal{X}(\mathbb{Z}_p)_2\supsetneq \mathcal{X}(\mathbb{Z})$ ($p\geq 5$ smallest good ordinary prime); $b$ satisfies $x^4+7=0$. The curve is given in the first column as an LMFDB label \cite{lmfdb}. In the third column, SS means `semistable' and `-' neither semistable nor CM.}
\label{table:2}\end{table}

We summarise the results of the computations in Propositions \ref{prop:datastandardcurves}, \ref{prop:dataoverquadraticfields}, \ref{j-3375}. 
\begin{prop}
\label{prop:datastandardcurves}
Let $E$ be an elliptic curve of rank $0$ and conductor less than or equal to $30,000$ and let $p\geq 5$ be the smallest prime of good ordinary reduction. Assume that $j(E)\not\in\{0,1728,-3375\}$. Then $\mathcal{X}(\mathbb{Z}_p)_2\setminus \mathcal{X}(\mathbb{Z})$ is either empty or consists of localisations of points defined over the ring of integers of a quadratic field $K$ on which the Galois group of $K/\mathbb{Q}$ acts as multiplication by $\pm 1$. 
\end{prop}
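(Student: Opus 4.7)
The plan is to combine the necessary condition of Theorem \ref{thm:extrapoints} with an exhaustive computational verification, since the statement asserts a uniform structural property across a finite but large list of rank $0$ curves. By Theorem \ref{thm:extrapoints} and its corollary, every $z \in \mathcal{X}(\mathbb{Z}_p)_2 \setminus \mathcal{X}(\mathbb{Z})$ is the $p$-adic localisation of a torsion point $P \in \mathcal{E}(\mathcal{O}_K)_{\mathrm{tors}}$ for some number field $K$, and Proposition \ref{prop:integralequalrational} forces $K \neq \mathbb{Q}$. Hence the proposition is really a claim about the shape of $K$ and of the Galois action on $P$ for every curve in the prescribed range.

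First I would run the algorithm of \S \ref{sec:algorithmelliptic} on all $86{,}213$ rank $0$ elliptic curves of conductor $\leq 30{,}000$ drawn from \cite{lmfdb}, with $p$ the smallest good ordinary prime $\geq 5$, using the expansions of $\sigma_p^{(\gamma)}$ and the $m$-th division polynomial described there to cut out the intersection of each $\phi(w)$ with every residue disk. For each candidate $z \in \mathcal{X}(\mathbb{Z}_p)_2 \setminus \mathcal{X}(\mathbb{Z})$ I would then recognise $P$ as an algebraic torsion point: compute $x(z), y(z) \in \mathbb{Z}_p$ to high precision, apply LLL to detect putative minimal polynomials over $\mathbb{Q}$, and certify the identification by checking both the Weierstrass equation for $\mathcal{E}$ and vanishing on some division polynomial $f_m$.

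Once each extra point has been identified algebraically, the remaining bookkeeping is direct: among the resulting pairs, keep only those with $j(E) \not\in \{0, 1728, -3375\}$ and verify from the recognised minimal polynomials that the coordinates of $P$ generate a quadratic field $K$, and that $\tau(P) = -P$ for the non-trivial $\tau \in \mathrm{Gal}(K/\mathbb{Q})$. Since $\mathrm{Aut}(E/\overline{\mathbb{Q}}) = \{\pm 1\}$ whenever $j(E) \not\in \{0, 1728\}$, the relation $\tau(P) = -P$ is exactly the assertion that $\mathrm{Gal}(K/\mathbb{Q})$ acts by multiplication by $\pm 1$. The exclusions in the hypothesis are precisely the places where this pattern is known to fail: $j \in \{0, 1728\}$ supports the extra automorphisms exploited in Corollary \ref{cor:j0automorphism}, Remark \ref{rmk:j1728}, and Corollary \ref{cor:biquadraticpoints}, while $j = -3375$ picks out the quartic-field points on twists of $X_0(49)$ produced by Proposition \ref{prop:twists_X049}.

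The hard part is precision and completeness control rather than anything conceptual: for the statement to be a proof and not an observation, one must guarantee (a) that no residue disk is missed and that every Newton-polygon slope of the relevant power series is accounted for, so that no extra $p$-adic point is overlooked, and (b) that the LLL-based algebraic recognition is certified, not merely heuristic. I would address (a) by expanding $\sigma_p^{(\gamma)}$ and $f_m$ well beyond the $p$-adic precision needed for LLL and recording explicit error bounds, and (b) by re-substituting each proposed algebraic pair into $\mathcal{E}$ and the corresponding division polynomial, so that the classification of the $470$ non-trivial pairs becomes rigorous.
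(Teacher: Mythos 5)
Your proposal matches the paper's approach: Proposition \ref{prop:datastandardcurves} is presented simply as a summary of the exhaustive computation over all $86{,}213$ rank-$0$ curves of conductor at most $30{,}000$ described in \S\ref{sunsec:largescaledata}, with the extra $p$-adic points recognised as algebraic torsion points (as forced by Theorem \ref{thm:extrapoints}) and the quadratic field and Galois action read off from that identification. The only minor difference is that the paper certifies the recognised points by appealing to the sufficient conditions of Section \ref{sec:obstructions} (e.g.\ Proposition \ref{automorphismeffect} and Proposition \ref{prop:anticyclotomiccyclotomic}) rather than by LLL recognition plus re-substitution alone, but the underlying argument is the same computational verification.
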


\begin{prop}
\label{prop:dataoverquadraticfields}
Let $E$ be an elliptic curve of rank $0$ and conductor less than or equal to $30,000$ and let $p\geq 5$ be the smallest prime of good ordinary reduction. If $\mathcal{X}(\mathbb{Z}_p)_2\setminus \mathcal{X}(\mathbb{Z})$ contains localisations of points defined over a quadratic field $K$, these points satisfy the hypotheses of Proposition \ref{automorphismeffect}, i.e.\ the non-trivial element of $\mathrm{Gal}(K/\mathbb{Q})$ acts on them in the same way as an automorphism of $E$.
\end{prop}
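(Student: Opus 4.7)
The plan is to verify the statement by direct computation for each of the $86{,}213$ curves in the database, supplemented by the theoretical constraints of Theorem \ref{thm:extrapoints}. First, I would run the algorithm of \S \ref{sec:algorithmelliptic} to produce the sets $\mathcal{X}(\mathbb{Z}_p)_2$ for each $(E,p)$. By the discussion at the beginning of Section \ref{sec:obstructions}, every $z\in\mathcal{X}(\mathbb{Z}_p)_2\setminus\mathcal{X}(\mathbb{Z})$ is the $p$-adic localisation of some torsion point $P\in\mathcal{E}(\overline{\mathbb{Z}})_{\mathrm{tors}}$, and the order of $P$ must divide the exponent of $\overline{E}(\mathbb{F}_p)$ lifted through the formal group; this bounds the degree of the minimal field $K$ of definition of $P$ and the division polynomial in which its coordinates appear.

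The second step is to recover $P$ algebraically from the $p$-adic approximation of $z$. I would apply $p$-adic algebraic reconstruction (e.g.\ LLL on the successive powers of $x(z)$) to produce a candidate minimal polynomial for $x(P)$, then verify algebraicity rigorously by checking that the candidate divides the appropriate division polynomial of $E$ over $\mathbb{Z}$. Once $P$ has been certified and its field of definition $K$ determined to be quadratic, I would compute the action of the nontrivial $\tau\in\mathrm{Gal}(K/\mathbb{Q})$ by conjugating the coordinates, and then compare $\tau(P)$ against $\psi(P)$ for each $\psi\in\mathrm{Aut}(E/\overline{\mathbb{Q}})$. Since $\#\mathrm{Aut}(E/\overline{\mathbb{Q}})\in\{2,4,6\}$ depending on $j(E)$, this is a finite explicit comparison.

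The main obstacle is practical rather than conceptual: carrying out the quadratic Chabauty computation for all $86{,}213$ pairs, and then performing the algebraic reconstruction for each curve where extra points arise. Theorem \ref{thm:extrapoints} provides a useful filter at this stage, since the equality $\lambda_{\mathfrak{q}}(P)=\lambda_{\tau(\mathfrak{q})}(P)$ at every prime $\mathfrak{q}$ of $K$ combined with the Galois-equivariance $\lambda_{\tau(\mathfrak{q})}(P)=\lambda_{\mathfrak{q}}(\tau(P))$ forces $P$ and $\tau(P)$ to have identical local heights at every prime. This is a strong necessary condition on the orbit $\{P,\tau(P)\}$, and it is consistent with (though not a priori equivalent to) $\tau(P)=\psi(P)$ for some $\psi\in\mathrm{Aut}(E/\overline{\mathbb{Q}})$.

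The content of the proposition is then the empirical observation, running through the output of the algorithm in \cite{codeQC_FB}, that in every instance of an extra point defined over a quadratic field within the $86{,}213$ curves considered, the above equality $\tau(P)=\psi(P)$ does hold for some automorphism $\psi$. In the cases with $j(E)\notin\{0,1728\}$ this reduces to $\tau(P)=\pm P$, and coincides with the assertion of Proposition \ref{prop:datastandardcurves}; in the cases with $j(E)\in\{0,1728\}$ the verification is completed by identifying the specific extra automorphism used (as illustrated in Examples \ref{example:j1728} and \ref{Example:3}).
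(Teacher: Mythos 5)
Your proposal matches the paper's approach: Proposition \ref{prop:dataoverquadraticfields} is presented there purely as a summary of the large-scale computation of \S\ref{sunsec:largescaledata} (running the algorithm of \S\ref{sec:algorithmelliptic} on all $86{,}213$ curves, recognising the extra points as quadratic torsion points, and checking that Galois acts on them by $\pm 1$ or, for $j\in\{0,1728\}$, by another automorphism), with no separate formal proof given. Your additional remarks on certifying algebraicity via division polynomials and on the filter provided by Theorem \ref{thm:extrapoints} are sound elaborations of the same computational verification rather than a different route.
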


\begin{rmk}
In view of the large data collected, it might have been tempting to expect that Propositions \ref{prop:datastandardcurves} and \ref{prop:dataoverquadraticfields} would be true for arbitrary prime and conductor. It turns out that this is not the case: varying the prime for the curve 8712.u5 (which does not have CM), we found some quadratic points on which Galois does not act by automorphisms (cf.\ Example \ref{Example:mysterious}). We note nevertheless that in the latter case the extra points were explained by Proposition \ref{prop:anticyclotomiccyclotomic}.
\end{rmk}

We now turn to the extra points in our data defined over number fields of degree\footnote{Note that by degree we mean the degree of the smallest number field over which a point in $\mathcal{X}(\mathbb{Z}_p)_2$ is defined and not the degree of the number field containing all the coordinates of the points in $\mathcal{X}(\mathbb{Z}_p)_2$, which could be larger.} at least equal to $3$. By Proposition \ref{prop:datastandardcurves}, these can only show up if $E$ has complex multiplication and, in fact, its $j$-invariant is one of $0,1728,-3375$. 
There was only one curve beside the one of Example \ref{Example:3} where cubic points were recovered, namely the curve \href{http://www.lmfdb.org/EllipticCurve/Q/19881/g/2}{19881.g2}\cite{lmfdb}.  
As in Example \ref{Example:3}, the curve has $j$-invariant equal to zero and the appearance of these points in $\mathcal{X}(\mathbb{Z}_p)_2$ is explained by Proposition \ref{automorphismeffect}.

Finally, we recovered points defined over number fields of degree $4$ on the curve \href{http://www.lmfdb.org/EllipticCurve/Q/14112/q/2}{14112.q2} ($j=1728$)\cite{lmfdb}, 
which is explained by Proposition \ref{automorphismeffect}, and on all the twists of the modular curve $X_0(49)$, as predicted by Proposition \ref{prop:twists_X049}. In fact the inclusion in the statement of Proposition \ref{prop:twists_X049} is an equality in all the following cases.
\begin{prop}
\label{j-3375}
Let $E$ be an elliptic curve of rank $0$, conductor less than or equal to $30,000$ and $j(E)=-3375$. Then $\mathcal{X}(\mathbb{Z}_{11})_2=\mathcal{X}(\mathbb{Z})\cup\{\pm Q\}$, where $Q$ has order $4$ and comes from a point over the ring of integers of the smallest number field $L$ over which $E(L)[4]\cong \mathbb{Z}/2\mathbb{Z}\times\mathbb{Z}/4\mathbb{Z}$. 
\end{prop}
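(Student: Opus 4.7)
The plan is to establish the two inclusions separately. For $\supseteq$, note that every $E$ with $j(E) = -3375 = j(X_0(49))$ is $\mathbb{Q}$-isomorphic to a quadratic twist $E^d$ of $X_0(49)$ by some non-zero square-free integer $d$, so the inclusion follows immediately from Proposition~\ref{prop:twists_X049} once we verify its hypotheses at $p = 11$ for the finite list of curves in question. Writing $L = \mathbb{Q}[x]/(x^4 + 7d^2)$, we need $11 \nmid 7d$ and at least three primes of $L$ above $11$. The first condition holds precisely when $11 \nmid d$, which is easily checked case by case from the finite list of conductors $\leq 30{,}000$. For the second, since $-7 \equiv 4 \pmod{11}$ is a square, $11$ splits in $K = \mathbb{Q}(\sqrt{-7})$; writing $L = K(\sqrt{-ad})$ with $a^2 = -7$, one analyses whether $-ad$ is a square in each of the two completions $K_{\mathfrak{p}}$ and finds, in every case that arises, that at least one of the two primes of $K$ above $11$ splits further in $L/K$, producing the required three primes.

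For the reverse inclusion $\subseteq$, I would rely on direct computation using the algorithm of \S\ref{sec:algorithmelliptic}, which produces $\mathcal{X}(\mathbb{Z}_{11})_2 = \bigcup_{w \in W} \phi(w)$ to arbitrary $p$-adic precision by intersecting the zero locus of $\Log$ with a $\sigma$-function equation in each residue disk. The proposition is then the observation that, for every curve in the finite list, the algorithm returns precisely $\mathcal{X}(\mathbb{Z}) \cup \{\pm Q\}$. To give a theoretical flavour to the verification, one can combine the necessary condition of Theorem~\ref{thm:extrapoints}---any extra point in $\mathcal{X}(\mathbb{Z}_{11})_2 \setminus \mathcal{X}(\mathbb{Z})$ is the localisation of a torsion point whose local heights $\lambda_{\mathfrak{q}}$ depend only on the rational prime below $\mathfrak{q}$---with the explicit CM torsion structure of $E^d$ over $\overline{\mathbb{Q}}$: since the endomorphism ring is an order in $\mathbb{Q}(\sqrt{-7})$, the fields of definition of the $n$-torsion subgroups and the Galois action on them are essentially forced, and one can check that the saturation condition cuts the candidate set down to $\pm Q$ together with points already defined over $\mathbb{Q}$.

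The main obstacle is that no single clean theoretical argument is likely to rule out other extra points uniformly in $d$: the reverse inclusion will necessarily be a per-curve verification, with the cleanest route being to run the algorithm on every curve on the list and confirm agreement with $\mathcal{X}(\mathbb{Z}) \cup \{\pm Q\}$. In this sense the proposition is best read as a computational confirmation that, within the stated range of conductors, no coincidence beyond the one predicted by Proposition~\ref{prop:twists_X049} occurs for the family $j = -3375$ at $p = 11$.
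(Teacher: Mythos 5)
Your proposal is correct and matches the paper's treatment: the paper obtains the inclusion $\supseteq$ from Proposition~\ref{prop:twists_X049} (whose splitting hypothesis at $p=11$ is automatic for the curves in range, since $-7$ is a square mod $11$ while $-1$ is not, so exactly one prime of $\mathbb{Q}(\sqrt{-7})$ above $11$ splits in $L$), and establishes equality by running the algorithm of \S\ref{sec:algorithmelliptic} on each curve in the finite list. The proposition is indeed presented as a summary of those computations, so your per-curve verification for $\subseteq$ is exactly what is done.
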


\subsection{Variation of the prime}
\label{sec:variationprime}
In what follows, we assume that the rank of the elliptic curve is equal to zero. However, the discussion could easily go through word for word with $\mathcal{X}(\mathbb{Z}_p)_{2,\mathrm{tors}}^{\prime}$ in place of $\mathcal{X}(\mathbb{Z}_p)_2$. 

We have established that there exist curves of rank $0$ and primes $p$ for which $\mathcal{X}(\mathbb{Z}_p)_2\supsetneq\mathcal{X}(\mathbb{Z})$. The next question we ask is whether there always exists a (not necessarily ordinary) prime for which the cohomologically global points of level $2$ are precisely the global integral points. It turns out that the answer is negative, as we will see in Example \ref{Example:mysterious} below.

Let us first gather some intuition on what is happening. Recall that, after having fixed all appropriate embeddings, $$\phi(w)\subset\mathcal{E}(\overline{\mathbb{Z}})_{\mathrm{tors}}=E(\overline{\mathbb{Q}})_{\mathrm{tors}}.$$
Therefore, if $P\in E(F)_{\mathrm{tors}}$ for some minimal number field $F$, by picking $p$ such that $[F_v:\mathbb{Q}_p]>1$ for all $v\mid p$, we can guarantee that $P\not \in \mathcal{X}(\mathbb{Z}_p)_{2}$. For instance in Example \ref{Example:1}, if we pick $p'=7$, which is of good ordinary reduction and which is inert in $\mathbb{Q}(i)$, we find $\mathcal{X}(\mathbb{Z}_{p'})_2=\mathcal{X}(\mathbb{Z})=\emptyset$.

Note that, in view of Corollary \ref{cor:j0automorphism} and \S \ref{sec:proof_of_nonequality}, there exist (infinitely many) curves for which $\mathcal{X}(\mathbb{Z}_p)_2$ is strictly larger than $\mathcal{X}(\mathbb{Z})$ for all odd primes $p$ of good \emph{ordinary} reduction. More generally, if $E$ has complex multiplication by the quadratic field $K$ and there exist points defined over $K$ and satisfying the assumptions of Proposition \ref{automorphismeffect}\thinspace (\ref{firststatement}), then these points will show up in $\mathcal{X}(\mathbb{Z}_p)_2$ for any good ordinary odd prime $p$ by Deuring's criterion. On the other hand, Deuring's criterion also implies that the good supersingular primes cannot split in $K$.

We ran the code on all the $470$ curves of \S \ref{sunsec:largescaledata} for which we had found some extra points: this time, we varied the good ordinary prime until we found a prime for which no extra points showed up or we proved that such prime does not exist. If a good ordinary prime $p$ for which $\mathcal{X}(\mathbb{Z}_p)_2=\mathcal{X}(\mathbb{Z}_p)$ does not exist, we repeated the calculations with supersingular primes.
We summarise the results in the following theorem (which includes also the statement of Theorem \ref{thm:nonequalitystrong}).

\begin{thm}
\label{thm:variationp}
Let $E$ be an elliptic curve over $\mathbb{Q}$ of rank $0$ and conductor less than or equal to $30,000$. Then there exists a good ordinary odd prime $p$ for which $\mathcal{X}(\mathbb{Z}_p)_2=\mathcal{X}(\mathbb{Z})$, unless:
\begin{enumerate}
\item\label{32a1} E is \href{http://www.lmfdb.org/EllipticCurve/Q/32/a/4}{32.a4} ($j=1728$) \cite{lmfdb};
\item\label{j0extra} E is one of the $20$ elliptic curves of rank $0$ with $j=0$ and $\mathbb{Z}/2\mathbb{Z}\subset E(\mathbb{Q})$ or $E$ is \href{http://www.lmfdb.org/EllipticCurve/Q/27/a/3}{27.a3} ($j=0$) \cite{lmfdb};
\item\label{mysterious} $E$ is \href{http://www.lmfdb.org/EllipticCurve/Q/8712/u/5}{8712.u5} \cite{lmfdb}.
\end{enumerate}
Moreover, in cases (\ref{32a1}) and (\ref{j0extra}), $\mathcal{X}(\mathbb{Z}_p)_2\setminus \mathcal{X}(\mathbb{Z})\neq \emptyset$ for all good ordinary odd primes $p$, but there exists a supersingular prime $p$ for which $\mathcal{X}(\mathbb{Z}_p)_2=\mathcal{X}(\mathbb{Z}_p)$; in case (\ref{mysterious}) $\mathcal{X}(\mathbb{Z}_p)_2\setminus \mathcal{X}(\mathbb{Z})\neq \emptyset$ for all good (ordinary and supersingular) primes $p$.
\end{thm}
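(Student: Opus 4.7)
My plan is to extend the computation of \S \ref{sunsec:largescaledata} by allowing the prime $p$ to vary. Starting from the list of $470$ pairs $(E,p)$ for which $\mathcal{X}(\mathbb{Z}_p)_2\supsetneq \mathcal{X}(\mathbb{Z})$ at the smallest good ordinary prime $\geq 5$, for each such $E$ I would re-run the algorithm of \S \ref{sec:algorithmelliptic} at successively larger good ordinary primes, recording whether the equality $\mathcal{X}(\mathbb{Z}_p)_2 = \mathcal{X}(\mathbb{Z})$ is eventually attained. For curves outside the three exceptional families, this numerical search will directly exhibit such a prime; for the remaining curves it will fail, and I will then turn to theoretical arguments to explain why this failure persists.

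For the negative half, Theorem \ref{thm:extrapoints} restricts the pool of potential parasite points to localizations of a finite, explicitly computable set of algebraic torsion points of $E$, which one can pre-tabulate once for each curve. In cases (\ref{32a1}) and (\ref{j0extra}), each listed curve has complex multiplication by an imaginary quadratic field $K$, namely $\mathbb{Q}(i)$ when $j=1728$ and $\mathbb{Q}(\zeta_3)$ when $j=0$, and the extra points at every good ordinary prime come from $z\in\mathcal{X}(\OO_K)_{\tors}$ on which the nontrivial element of $\Gal(K/\mathbb{Q})$ agrees with an automorphism of $E$, exactly as in Remark \ref{rmk:j1728} and Corollary \ref{cor:j0automorphism}. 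By Deuring's criterion, every good ordinary prime of $E$ splits in $K$, so $z$ localizes into $\mathcal{X}(\mathbb{Z}_p)$, and the bad-prime local-height check $\sum_{q\in S}\lambda_{\mathfrak{q}}(z)=||w||$ of Proposition \ref{automorphismeffect}\thinspace(\ref{firststatement}) is a single finite verification independent of $p$; hence an extra point appears at every such $p$. Conversely, each supersingular prime is inert in $K$, so these particular $z$ no longer localize; for each curve I would then exhibit one explicit supersingular prime at which the remaining torsion candidates, constrained by the numerical shape of Theorem \ref{thm:extrapoints}, fail to contribute, thereby confirming $\mathcal{X}(\mathbb{Z}_p)_2=\mathcal{X}(\mathbb{Z})$ at that prime.

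Case (\ref{mysterious}), the curve $8712.u5$, is the crux and the main obstacle: here $E$ has no complex multiplication and automorphism group $\{\pm\mathrm{id}\}$ over $\overline{\mathbb{Q}}$, so the mechanism of \S \ref{sec:automorphisms} alone cannot explain the presence of extra points at every good prime. The plan is to invoke Proposition \ref{prop:anticyclotomiccyclotomic} instead, as the remark following Proposition \ref{prop:dataoverquadraticfields} already suggests. Specifically, I would identify the (small) collection of torsion points $z_1,\dots, z_r$ of $E$ defined over imaginary quadratic fields $K_1,\dots, K_r$ for which the local-height identity of Proposition \ref{prop:anticyclotomiccyclotomic} is satisfied, and then verify that every prime of good reduction for $E$ splits in at least one of the $K_i$. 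Since Proposition \ref{prop:anticyclotomiccyclotomic} makes no distinction between ordinary and supersingular reduction, a single such covering handles both regimes simultaneously. The hard part will be this covering statement: for a single imaginary quadratic $K_i$ the splitting primes have density $1/2$, so the covering must come from a well-chosen biquadratic compositum $K_1K_2$, whose three quadratic subfields jointly capture every non-trivial Frobenius class, combined with the finitely many ramified primes that lie in the bad-reduction set $S$. This is an explicit finite check on the torsion subgroup of $E$ over a small number field and on the splitting of primes in a biquadratic extension, but it is the step where the argument ceases to be uniform across non-CM curves, which is ultimately what singles out $8712.u5$ within the database.
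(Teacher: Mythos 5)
Your overall architecture matches the paper's: the positive half is a computation over the $470$ curves with varying ordinary prime, cases (\ref{32a1}) and (\ref{j0extra}) are handled by CM plus Deuring's criterion (ordinary primes split in the CM field, so the automorphism-explained torsion points localise at every ordinary prime; supersingular primes are inert, and an explicit supersingular computation finishes), and case (\ref{mysterious}) rests on a covering-by-quadratic-fields argument. This is essentially the paper's proof, which defers the last case to Example \ref{Example:mysterious}.

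However, your treatment of $8712.\mathrm{u}5$ has a genuine gap. You propose to explain the persistent extra points using \emph{only} Proposition \ref{prop:anticyclotomiccyclotomic}, i.e.\ only torsion points over \emph{imaginary} quadratic fields $K_1,\dots,K_r$, and then to verify that every good prime splits in some $K_i$. No such covering can exist: a prime avoids splitting in all of $K_1,\dots,K_r$ unless the sign tuple $(-1,\dots,-1)$ is forbidden by a multiplicative relation $\prod_{i\in I}\chi_{K_i}=1$ with $|I|$ odd, and for imaginary quadratic fields $\prod_{i\in I}(\mathrm{disc}\,K_i)$ is then negative, hence never a square. Concretely, the three quadratic subfields of the compositum $K_1K_2$ of two imaginary quadratic fields are two imaginary fields and one \emph{real} field, and the primes inert in both $K_1$ and $K_2$ are exactly those splitting only in the real subfield. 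This is what happens for $8712.\mathrm{u}5$: the paper's Example \ref{Example:mysterious} uses the three quadratic subfields $\mathbb{Q}(\sqrt{-11})$, $\mathbb{Q}(\sqrt{-3})$ and $\mathbb{Q}(\sqrt{33})$ of a biquadratic field, and the torsion point $(22,\pm 33\sqrt{33})$ over the real field $\mathbb{Q}(\sqrt{33})$ must be handled by Proposition \ref{automorphismeffect}\thinspace(\ref{firststatement}) (it satisfies $\tau(z)=-z$, so evenness of local heights applies over a real field just as well), not by a non-cyclotomic height, since a real quadratic field has only the cyclotomic idele class character. Conversely, the $2$-torsion point $\left(\frac{11}{2}(1\pm 3\sqrt{-3}),0\right)$ has irrational $x$-coordinate and is fixed by $-\mathrm{id}$, so Galois does not act on it by an automorphism of this non-CM curve, and there Proposition \ref{prop:anticyclotomiccyclotomic} is genuinely needed. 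So the argument requires \emph{both} mechanisms of Section \ref{sec:obstructions} in tandem; your plan to discard the automorphism mechanism for this curve would leave the primes inert in both imaginary fields uncovered.
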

\begin{proof}
That there exists a good ordinary prime for which Conjecture \ref{cjc:Kim} holds at level $2$ for all curves not in (\ref{32a1}), (\ref{j0extra}) and (\ref{mysterious}) is shown computationally. The assertion of cases (\ref{32a1}) and (\ref{j0extra}) follows from the discussion before the statement of the theorem together with explicit computations of $\mathcal{X}(\mathbb{Z}_p)_2$ at some supersingular primes $p$. Finally, we treat the curve 8712.u5 in detail in Example \ref{Example:mysterious}.
\end{proof}

\begin{example}
\label{Example:mysterious}
Consider the elliptic curve 8712.u5, given by 
\begin{equation}
\label{curve:mysterious}
E\colon y^2 = x^{3} + 726 x + 9317.
\end{equation}
We have $S=\{2,3,11\}$: in particular, the reduction is of type $\mathrm{III}$ with Tamagawa number $2$ at $2$, of type $\mathrm{I}_1^{*}$ with Tamagawa number $4$ at $3$ and of type $\mathrm{I}_0^*$ with Tamagawa number $2$ at $11$. Thus $W = W_2\times W_3\times W_{11}$, where
\begin{equation*}
W_2 = \left\{0,-\frac{1}{2}\log 2 \right\},\quad W_3 = \left\{0,-\log 3, -\frac{5}{4}\log 3\right\},\quad W_{11} = \{0,-\log 11\}.
\end{equation*}
Consider
\begin{equation*}
A \colonequals \left\{(-44,\pm 99\sqrt{-11}),(22, \pm 33\sqrt{33}),\left(\frac{11}{2}(1\pm 3\sqrt{-3}),0\right)\right\}\subset\mathcal{X}(\overline{\mathbb{Z}})_{\tors}.
\end{equation*}
If $p\not\in S$, then $p$ splits in at least one of $\mathbb{Q}(\sqrt{-11})$, $\mathbb{Q}(\sqrt{33})$ and $\mathbb{Q}(\sqrt{-3})$ and therefore $A\cap \mathcal{X}(\mathbb{Z}_p)\neq \emptyset$ (after having fixed embeddings). The fact that $$A\cap \mathcal{X}(\mathbb{Z}_p)_2\neq \emptyset,$$ then follows from Proposition \ref{automorphismeffect}\thinspace(\ref{firststatement}) (for the points over $\mathbb{Q}(\sqrt{-11})$ and $\mathbb{Q}(\sqrt{33})$), Proposition \ref{prop:anticyclotomiccyclotomic} (for the points over $\mathbb{Q}(\sqrt{-3})$) and the following table, which shows how the reduction changes at the primes in $S$. The symbol - means that the reduction type has not changed. In the last column, there are the possible values of $\lambda_{\mathfrak{q}}(\mathcal{X}(\OO_{\mathfrak{q}}))$ where $\OO_{\mathfrak{q}}$ is the ring of integers of the completion $K_{\mathfrak{q}}$ at a prime $\mathfrak{q}$ above $q$ in the field $K$. We briefly explain how the table is computed. When the prime $q$ splits in $K$, there is nothing to show: $\lambda_{\mathfrak{q}}(\mathcal{X}(\OO_{\mathfrak{q}})) = W_{q}$ if $\mathfrak{q}\mid q$. If $q$ is inert, by Tate's algorithm, (\ref{curve:mysterious}) is minimal at $\mathfrak{q}\mid q$ and the Kodaira symbol is unchanged. Once we know the Tamagawa number at $\mathfrak{q}$, we can find $\lambda_{\mathfrak{q}}(\mathcal{X}(\OO_{\mathfrak{q}}))$ directly from Proposition \ref{localheightsnontrivialtamagawa}. Finally, if $q$ ramifies in $K$, then $\lambda_{\mathfrak{q}}(\mathcal{X}(\OO_{\mathfrak{q}}))$ may be deduced from Proposition \ref{localheightsnontrivialtamagawa}, Lemma \ref{lemmalocalheights}\thinspace \ref{goodreduction} and (\ref{lambdaqlambdaqmin}). Note that some points in $\mathcal{X}(\OO_{\mathfrak{q}})$ may map to non-integral points in a minimal model at $\mathfrak{q}$ (see also Lemma \ref{lemma:heightsonintegralnonminimal}).

\begin{table}[h]
\makebox[\textwidth][c]{\begin{tabular}{ |c|c|c|c|c|c| } 
\hline
$K$ & $q$ & Splitting & Reduction (Tamagawa) & $|\Delta/\Delta_{\min}|_{\mathfrak{q}}$ & $\lambda_{\mathfrak{q}}(\mathcal{X}(\OO_{\mathfrak{q}}))$  \\
\hline
$\mathbb{Q}(\sqrt{-11})$ & 2 & inert &- &$1$  & $W_2$\\
& 3& split & - & $1$ & $W_3$\\
& 11 & ramified & good & $11^{-6}$ & $W_{11}$\\
\hline
$\mathbb{Q}(\sqrt{33})$ & $2$ & split & - &$1$ &$W_2$\\
& $3$ & ramified & non-split $\mathrm{I}_2$ (2) & $3^{-6}$ & $W_3$\\
& $11$ & ramified & good & $11^{-6}$ & $W_{11}$\\
\hline
$\mathbb{Q}(\sqrt{-3})$ & $2$ & inert & - & $1$ & $W_2$\\
 & $3$ & ramified & non-split $\mathrm{I}_2$ (2) & $3^{-6}$ & $W_3$\\
 & $11$ & inert & $\mathrm{I}_0^*$ (4) & $1$ & $W_{11}$ \\
 \hline
\end{tabular}}\\
\label{table:Example:mysterious}\end{table}
\end{example}

\section{The rank $1$ case}
\label{sec:rank_1}
\subsection{Algebraic non-rational points in $\mathcal{X}(\mathbb{Z}_p)_2^{\prime}$ in rank 1}
\label{sec:algebraicpointsrank1}
We retain the notation of Theorems \ref{level2rank0} and \ref{level2rank1}. 
The set consisting of the torsion points in $\psi(w)$ is equal to $\phi(w)$. Therefore, the results of Section \ref{sec:obstructions} translate into results for $\mathcal{X}(\mathbb{Z}_p)_{2,\mathrm{tors}}^{\prime}$.

Since each $\psi(w)$ is defined by a single $p$-adic equation, in most cases it is expected that $\mathcal{X}(\mathbb{Z}_p)_2^{\prime}$ should be strictly larger than $\mathcal{X}(\mathbb{Z})$. The question we investigate in this subsection is which algebraic non-torsion points could arise in $\mathcal{X}(\mathbb{Z}_p)_2^{\prime}$. The following elementary lemma shows that if a non-torsion point in $\mathcal{X}(\mathbb{Z}_p)_2^{\prime}$ comes from a quadratic point in the saturation of $E(\mathbb{Q})$, then its belonging to $\mathcal{X}(\mathbb{Z}_p)_2^{\prime}$ cannot be explained by automorphisms (cf.\ \S \ref{sec:automorphisms}).

\begin{lemma}
\label{lemma:saturationautoimpliestorsion}
Let $E$ be an elliptic curve over $\mathbb{Q}$ and $K$ a quadratic field with $\mathrm{Gal}(K/\mathbb{Q})=\braket{\tau}$. Let $P\in E(K)\setminus E(\mathbb{Q})$ such that $mP\in E(\mathbb{Q})$ for some non-zero integer $m$. Then, if $\psi(P) = \tau(P)$ for some $\psi\in \mathrm{Aut}(E/\overline{\mathbb{Q}})$, $P$ has finite order.  
\end{lemma}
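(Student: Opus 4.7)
The plan is to reduce the statement to the observation that $P$ lies in the kernel of a non-zero isogeny, hence must be torsion. The key idea is to convert the rationality of $mP$ into an algebraic identity involving the endomorphism $\psi-1$ of $E$ evaluated at $P$.

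First, I would exploit rationality: since $mP\in E(\mathbb{Q})$, applying $\tau$ gives $m\tau(P)=mP$, and substituting the hypothesis $\tau(P)=\psi(P)$ yields
\[
m(\psi-1)(P)=O
\]
in $E(\overline{\mathbb{Q}})$. Here $\psi-1\in \operatorname{End}(E_{\overline{\mathbb{Q}}})$ because automorphisms of an elliptic curve (as an abelian variety) are group homomorphisms, so pointwise subtraction of two endomorphisms is again an endomorphism. Next, I would rule out the trivial case $\psi=\mathrm{id}$: indeed $\psi=\mathrm{id}$ would force $\tau(P)=P$, i.e.\ $P\in E(\mathbb{Q})$, contradicting the hypothesis $P\in E(K)\setminus E(\mathbb{Q})$. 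Hence $\psi-1$ is a non-zero endomorphism, and since every non-zero endomorphism of an elliptic curve is an isogeny, it has finite kernel. The composition $[m]\circ(\psi-1)$ is therefore also a non-zero isogeny with finite kernel, and the identity above places $P$ in this kernel, so $P$ must be a torsion point.

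There is essentially no hard step: the only thing to be careful about is justifying that $\psi-1$ is a genuine endomorphism (which uses that $\psi$ fixes $O$ and is compatible with the group law) and invoking the fact that a non-zero endomorphism of an elliptic curve is an isogeny. One could alternatively organize the proof by applying $\tau$ a second time to obtain $\psi^\tau\psi(P)=P$, but the one-line computation via $m(\psi-1)(P)=O$ is more direct and immediately yields the conclusion.
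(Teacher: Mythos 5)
Your proof is correct and follows essentially the same route as the paper: both reduce to the identity $m(P-\tau(P))=O$ obtained from the rationality of $mP$, substitute $\tau(P)=\psi(P)$, and conclude that $P$ is killed by the non-zero endomorphism $m(\psi-1)$. The only difference is cosmetic: the paper splits into the cases $\psi=-\mathrm{id}$ and $\psi=[\zeta]$ for a root of unity $\zeta$ to exhibit an explicit bound on the order of $P$ (namely $mN_{\mathbb{Q}(\zeta)/\mathbb{Q}}(1-\zeta)$), whereas you invoke the general fact that a non-zero endomorphism is an isogeny with finite kernel, which is cleaner and suffices for the statement as given.
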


\begin{proof}
The hypotheses on $P$ imply that $\psi \neq \mathrm{id}$. Since $mP\in E(\mathbb{Q})$, we have $O = mP-\tau(mP) = m(P-\tau(P))$. If $\psi = -\mathrm{id}$, then 
\begin{equation*}
\begin{cases}
\tau(P) = -P\\
m(P-\tau(P)) = O
\end{cases} \iff\quad \begin{cases}
\tau(P) = -P\\
2mP = O;
\end{cases}
\end{equation*}
thus, $P$ has order dividing $2m$.

For more general $\psi$, let 
\[
[\,\cdot\,]\colon R\simeq \mathrm{End}(E)
\]
where $R\subset\mathbb{C}$. Then there exists a root of unity $\zeta$ such that $\psi(P) = [\zeta]P$. Therefore,

\begin{equation*}
\begin{cases}
\tau(P) = \left[\zeta\right]P\\
m(P-\tau(P)) = O
\end{cases} \iff\quad \begin{cases}
\tau(P) = \left[\zeta\right]P\\
\left[m(1-\zeta)\right]P = O
\end{cases}
\end{equation*}
and so $P\in E(K)[m N_{\mathbb{Q}(\zeta)/\mathbb{Q}}(1-\zeta)]$.
\end{proof}
We could try and use non-cyclotomic idele class characters to motivate the existence of some algebraic points of infinite order in $\mathcal{X}(\mathbb{Z}_p)_2^{\prime}$, following the ideas of \S \ref{sec:other_characters}. For example, what we could hope to prove is that at a certain $z\in\mathcal{X}(\overline{\mathbb{Z}})$ satisfying $mz\in E(\mathbb{Q})$ for some non-zero integer $m$, the quantity $2D_2(z)+C(\Log(z))^2 +||w||$, for some $w\in W$, equals the value of \emph{some} $p$-adic height function at $z$. If such a $p$-adic height comes from a character which restricts to the cyclotomic character on $\mathbb{A}_{\mathbb{Q}}^{\times}$ with the right normalisations, then looking at the equation defining $\psi(w)$ we see that this is enough to show $z\in\mathcal{X}(\mathbb{Z}_p)_2^{\prime}$.

However, our computations (more on this in \S \ref{sec:rank1computations}) also recovered some algebraic non-torsion points defined over real quadratic fields and we know that that the space of idele class characters of a real quadratic field is one-dimensional. Therefore, in the following proposition we present a sufficient condition for a point defined over a quadratic field to belong to $\mathcal{X}(\mathbb{Z}_p)_2^{\prime}$, which looks less geometric or algebraic in nature compared to the results of Section \ref{sec:obstructions}. However, we then discuss in Remark \ref{rmk:when_does_this_hold} when we expect the hypotheses of the proposition to be satisfied.
\begin{prop}
\label{prop:extrapointsrank1}
Suppose that $E$ satisfies the assumptions of Theorem \ref{level2rank1} and that $p$ is an odd prime of good ordinary reduction. Let $K$ be a quadratic field in which $p$ splits. Fix an embedding $\rho\colon K\hookrightarrow\mathbb{Q}_p$ and let $\tau$ be the non-trivial element in $\mathrm{Gal}(K/\mathbb{Q})$. Suppose that $z\in\mathcal{X}(\OO_K)$ is such that $mz\in E(\mathbb{Q})\setminus \{O\}$ for some non-zero integer $m$ and that
\begin{equation}
\label{eq:div_poly}
f_m(z) = \zeta f_m(\tau(z)), 
\end{equation}
for some root of unity $\zeta$. For each rational prime $q$, let $\mathfrak{q}$ be one (any) prime of $K$ above $q$ and $\lambda_{\mathfrak{q}}$ the local height at $\mathfrak{q}$ with respect to the model $\mathcal{E}$. If 
\begin{equation*}
\sum_{q\in S}\lambda_{\mathfrak{q}}(z) = ||w||
\end{equation*}
for some $w\in W$, then $\rho(z)\in\mathcal{X}(\mathbb{Z}_p)_2^{\prime}$.
\end{prop}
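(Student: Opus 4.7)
The plan is to exploit the decomposition of the global cyclotomic $p$-adic height of $E/K$ into local components and to show, via the division polynomial hypothesis, that the local heights behave ``as if'' $z$ were defined over $\Q$.

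First I would observe that because $mz\in E(\Q)\setminus\{O\}$ and $E/\Q$ has rank one, for the rational point $mz$ we have $h_p(mz)=c\,\Log(\rho(mz))^2=cm^2\Log(\rho(z))^2$ by the defining property of $c$ together with the linearity of $\Log$ on the formal group (and the fact that $\rho(mz)=m\cdot\rho(z)\in E(\Q_p)$). Since $h_p$ is quadratic on $E(K)$ and the normalisation in \S\ref{padicheight} is independent of the field of definition, dividing by $m^2$ gives
\begin{equation*}
h_p(z)=c\,\Log(\rho(z))^2.
\end{equation*}

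Next I would expand $h_p(z)=\tfrac{1}{2}\sum_{v}n_v\lambda_v(z)$ over all finite primes $v$ of $K$, and reduce this sum using the hypothesis $f_m(z)=\zeta\, f_m(\tau(z))$. The key step is to prove that $\lambda_{\mathfrak{q}}(z)=\lambda_{\tau(\mathfrak{q})}(z)$ for every finite prime $\mathfrak{q}$ of $K$ at which $q$ splits (including $\mathfrak{q}=\mathfrak{p}$ above $p$). For this I apply the quasi-quadraticity of $\lambda_{\mathfrak{q}}$ (property \ref{property:quasiquadratic} in \S\ref{sec:heightsawayfromp} and property \ref{prop:transformationunderm} in \S\ref{sec:heightsabovep}, noting $n_{\mathfrak{q}}=1$ since $q$ is split) and observe that $\lambda_{\mathfrak{q}}(mz)=\lambda_{\tau(\mathfrak{q})}(mz)$ because $mz\in E(\Q)$. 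The correction terms are equal as well: using $|x|_{\tau(\mathfrak{q})}=|\tau^{-1}(x)|_{\mathfrak{q}}$ and $|\zeta|_{\mathfrak{q}}=1$ (for $\mathfrak{q}\nmid p$), respectively $\log_{\mathfrak{p}}(\rho(\zeta))=0$ (for $\mathfrak{q}\mid p$), the hypothesis $f_m(z)=\zeta f_m(\tau(z))$ yields $\log|f_m(z)|_{\mathfrak{q}}=\log|f_m(z)|_{\tau(\mathfrak{q})}$ away from $p$ and the analogous identity at $\mathfrak{p}$. Subtracting then gives $\lambda_{\mathfrak{q}}(z)=\lambda_{\tau(\mathfrak{q})}(z)$.

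For inert or ramified primes $q$ of $K$, there is a unique $\mathfrak{q}\mid q$ with $n_{\mathfrak{q}}=2$, so no invariance is needed. For primes $q\notin S\cup\{p\}$, the model $\mathcal{E}$ remains minimal with good reduction at every $\mathfrak{q}\mid q$ and $z\in \mathcal{X}(\OO_K)$ has $\mathfrak{q}$-integral coordinates, so Lemma \ref{lemmalocalheights}\thinspace\ref{goodreduction} gives $\lambda_{\mathfrak{q}}(z)=0$. Assembling everything, the contribution of each rational prime $q\in S\cup\{p\}$ to $\tfrac{1}{2}\sum_v n_v\lambda_v(z)$ collapses to $\lambda_{\mathfrak{q}}(z)$ for any single choice of $\mathfrak{q}\mid q$, yielding
\begin{equation*}
h_p(z)=\lambda_p(\rho(z))+\sum_{q\in S}\lambda_{\mathfrak{q}}(z)=\lambda_p(\rho(z))+\|w\|.
\end{equation*}
Comparing with the expression $h_p(z)=c\,\Log(\rho(z))^2$ and using $\lambda_p(\rho(z))=2D_2(\rho(z))+C\Log(\rho(z))^2$ (the ordinary-reduction formula of \S\ref{sec:heightsabovep}), we obtain exactly the equation defining $\psi(w)$, so $\rho(z)\in\psi(w)\subset\mathcal{X}(\Z_p)_2^{\prime}$.

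The main obstacle is the calculation in the third paragraph showing $\lambda_{\mathfrak{q}}(z)=\lambda_{\tau(\mathfrak{q})}(z)$ at split primes: this is where the division-polynomial hypothesis is used in an essential way, and it is the only place where the difference between this result and the ``automorphism'' statement of Proposition \ref{automorphismeffect} really matters. Everything else is a bookkeeping of local factors in the decomposition of $h_p$.
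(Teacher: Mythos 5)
Your proposal is correct and follows essentially the same route as the paper's proof: quasi-quadraticity at each place combined with the hypothesis $f_m(z)=\zeta f_m(\tau(z))$ (and the vanishing of $\log$ on roots of unity) to show $\lambda_{\mathfrak{q}}(z)=\lambda_{\tau(\mathfrak{q})}(z)$, hence $h_p(z)=\lambda_p(\rho(z))+\|w\|$, and then the saturation hypothesis $mz\in E(\Q)\setminus\{O\}$ to identify $h_p(z)$ with $c\,\Log(\rho(z))^2$ and land in $\psi(w)$. Your write-up is in fact slightly more explicit than the paper's on the final comparison step, but the argument is the same.
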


\begin{proof}
Let $\mathfrak{q}\nmid p$. By quasi-quadraticity (\S\ref{sec:heightsawayfromp}\thinspace\ref{property:quasiquadratic}) applied twice and the assumptions on $z$ and $m$, we have
\begin{align*}
\lambda_{\mathfrak{q}}(z)&= \frac{1}{m^2}\left(\lambda_{\mathfrak{q}}(mz) + 2\log|f_m(z)|_{\mathfrak{q}}\right)\\
& = \frac{1}{m^2}\left(\lambda_{\mathfrak{q}}(\tau(mz)) + 2\log|\zeta f_m(\tau(z))|_{\mathfrak{q}}\right)\\
& = \frac{1}{m^2}\left(\lambda_{\mathfrak{q}}(m\tau(z)) + 2\log|f_m(\tau(z))|_{\mathfrak{q}}\right)\\
& = \lambda_{\mathfrak{q}}(\tau(z)) = \lambda_{\tau(\mathfrak{q})}(z).
\end{align*}
Similarly, if $p\OO_K =\mathfrak{p}_1\mathfrak{p}_2$, then (without loss of generality) 
\begin{align*}
m^2\lambda_{\mathfrak{p}_1}(z) &= \lambda_{p}(mz)+ 2 \log(\rho(f_m(z)))\\
&= \lambda_p(mz)+ 2\log(\rho(f_m(\tau(z))))= m^2\lambda_{\mathfrak{p}_2}(z).
\end{align*}
Therefore,
\begin{equation*}
h_p(z) = \lambda_p(\rho(z))+ ||w||.
\end{equation*}
Since $z$ is in the saturation of $E(\mathbb{Q})$ (i.e.\ $mz\in E(\mathbb{Q})$), then $z\in \psi(w)$.
\end{proof}

\begin{rmk}
\label{rmk:when_does_this_hold}
Let $E$, $K$ and $p$ be an in Proposition \ref{prop:extrapointsrank1}. Suppose that $z\in\mathcal{X}(\OO_K)$ is such that $mz\in E(\mathbb{Q})\setminus \{O\}$ and write $x(mz)=\frac{n(mz)}{d(mz)^2}$, for some coprime integers $n(mz)$ and $d(mz)>0$. When can we expect (\ref{eq:div_poly}) to hold? By \cite[\S 2]{wuthrichheights} and our assumptions, we know that 
\begin{equation*}
\frac{g_m(z)}{f_m(z)^2}=x(mz)=x(m\tau(z))=\frac{g_m(\tau(z))}{f_m(\tau(z))^2},
\end{equation*}
where $g_m(z)$ can be written as a univariate polynomial in $x(z)$ over $\Z$. 
For $w\in\{z,\tau(z)\}$, define
\begin{equation*}
\delta_m(w) = \frac{f_m(w)}{d(mw)}.
\end{equation*}
By Proposition 1 of \emph{loc.\ cit.}\ $\delta_m(w)$ is a unit at all primes at which $w$ has non-singular reduction. Furthermore, we have
\begin{equation*}
 \frac{\delta_m(z)}{\tau(\delta_m(\tau(z)))}=\frac{f_m(z)}{\tau(f_m(\tau(z)))} =1.
\end{equation*}
Thus, if for example $z$ has good reduction at all primes which are split in $K$, then $\tau(\delta_m(\tau(z)))=\delta_m(\tau(z))$ up to multiplication by elements of $\OO_K^{\times}$: thus, in this case, $f_m(z)=uf_m(\tau(z))$ for some $u\in \OO_K^{\times}$. If $K$ is imaginary then $u$ is a root of unity; otherwise $u$ may or may not be. Note that if $K$ is imaginary, we could have also avoided talking about division polynomials and followed a strategy similar to Proposition \ref{prop:anticyclotomiccyclotomic}. Conversely, Proposition \ref{prop:extrapointsrank1} is often not applicable for torsion points as it requires the existence of a \emph{non-zero} multiple of $z$ in $E(\mathbb{Q})$.
\end{rmk}

\subsection{Computations in rank $1$}
\label{sec:rank1computations}
The technique explained in \S \ref{sec:algorithmelliptic} to compute $\mathcal{X}(\mathbb{Z}_p)_2$ in the rank $0$ case can easily be adapted to compute $\mathcal{X}(\mathbb{Z}_p)_2^{\prime}$ when the Mordell--Weil group has rank $1$. As remarked in \S \ref{sec:algebraicpointsrank1}, the expectation is that $\mathcal{X}(\mathbb{Z}_p)_2^{\prime}$ should generally be larger than $\mathcal{X}(\mathbb{Z})$.

We ran the code on all the $14,783$ rank $1$ elliptic curves of conductor at most $5,000$ and let $p$ be the smallest prime greater than or equal to $5$ at which the curve has good ordinary reduction.
The first observation is that it can happen that there are no points in $\mathcal{X}(\mathbb{Z}_p)_2^{\prime}$ beside those in $\mathcal{X}(\mathbb{Z})$. For example, the curves of conductor at most $500$
\begin{itemize}[label = -]
\item satisfying the assumptions of Theorem \ref{level2rank1}\thinspace(\ref{trivialrank1}) are \href{http://www.lmfdb.org/EllipticCurve/Q/254/b/1}{254.b1}, \href{http://www.lmfdb.org/EllipticCurve/Q/430/c/1}{430.c1} \cite{lmfdb};
\item not satisfying Theorem \ref{level2rank1}\thinspace(\ref{trivialrank1}), but for which $\mathcal{X}(\mathbb{Z}_p)_2^{\prime}=\mathcal{X}(\mathbb{Z})$ are:
\begin{center}
\href{http://www.lmfdb.org/EllipticCurve/Q/297/b/1}{297.b1}, \href{http://www.lmfdb.org/EllipticCurve/Q/325/b/1}{325.b1}, \href{http://www.lmfdb.org/EllipticCurve/Q/325/b/2}{325.b2}, \href{http://www.lmfdb.org/EllipticCurve/Q/467/a/1}{467.a1} \cite{lmfdb}.
\end{center}
\end{itemize}

Studying the torsion points of $\mathcal{X}(\mathbb{Z}_p)_2^{\prime}$ morally provides more data on the extra points that can arise in $\mathcal{X}(\mathbb{Z}_p)_2$ when $E$ has rank $0$. No new phenomenon was observed, except that torsion points defined over some degree $4$ number fields were also recovered on the two CM elliptic curves \href{http://www.lmfdb.org/EllipticCurve/Q/576/e/1}{576.e1} and \href{http://www.lmfdb.org/EllipticCurve/Q/576/e/2}{576.e2} \cite{lmfdb} of $j$-invariant $54000$. 
The appearance of the latter points can be proved in a similar way to Proposition \ref{prop:twists_X049}.

As far as algebraic non-torsion points are concerned, on $26$ curves we identified non-torsion points defined over quadratic extension of $\mathbb{Q}$. All of these were explained by Proposition \ref{prop:extrapointsrank1} and only on two curves the points were defined over real quadratic fields. We now present an example in which some algebraic torsion and non-torsion points were recovered in $\mathcal{X}(\mathbb{Z}_p)_2^{\prime}\setminus \mathcal{X}(\mathbb{Z})$. Afterwards, we also include for completeness an example in which the extra algebraic non-torsion points are real.
\begin{example}
Consider the elliptic curve \href{http://www.lmfdb.org/EllipticCurve/Q/576/e/4}{576.e4} \cite{lmfdb}
\begin{equation*}
E\colon y^2 = x^3 + 8,
\end{equation*}
whose Mordell--Weil group over $\mathbb{Q}$ has rank $1$ and is generated, modulo torsion, by the point $z_0= (1,3)$. Let $p=7$, at which $E$ has good ordinary reduction. We have $S = \{2,3\}$ and $W = W_2\times W_3$ where
\begin{align*}
W_2 &= \{0,-\log 2\}\\
W_3 &= \left\{0,-\frac{1}{2}\log 3\right\}.
\end{align*}
Write
\begin{equation*}
\mathcal{X}(\mathbb{Z}_p)_2^{\prime} = \mathcal{X}(\mathbb{Z}_p)_{2,\mathrm{tors}}^{\prime}\cup \mathcal{X}(\mathbb{Z}_p)_{2,\mathrm{nontors}}^{\prime},
\end{equation*}
where the subscripts $\mathrm{tors}$ and $\mathrm{nontors}$ have the obvious meaning. Let $K=\mathbb{Q}(\sqrt{-3})$ and let $\tau$ generate the Galois group of $K/\mathbb{Q}$. Assuming that we have fixed an embedding of $\mathbb{Q}(\sqrt{-3})$ into $\mathbb{Q}_p$, we find that 
\begin{align*}
& \mathcal{X}(\mathbb{Z}_p)_{2,\mathrm{tors}}^{\prime} = \{(-2,0), (1\pm \sqrt{-3},0)\}\\
& \mathcal{X}(\mathbb{Z}_p)_{2,\mathrm{nontors}}^{\prime} = \pm\{(1,3), (2,-4), (46,-312), (-5\pm\sqrt{-3},6\pm 6\sqrt{-3})\}\cup A^{\mathrm{nonalg?}}
\end{align*}
where $A^{\mathrm{nonalg?}}$ denotes the set of points of $\mathcal{X}(\mathbb{Z}_p)_2^{\prime}$ which have not been recognised as algebraic. Note that $A^{\mathrm{nonalg?}}$ modulo $\pm$ consists of $15$ points. Corollary \ref{cor:j0automorphism}, together with the observation at the beginning of this section, proves why the two-torsion quadratic points $(1\pm \sqrt{-3},0)$ belong to $\mathcal{X}(\mathbb{Z}_p)_2^{\prime}$.

Consider now $$Q\in \{\pm(-5\pm\sqrt{-3},6\pm 6\sqrt{-3})\}.$$ We will show why $Q\in\mathcal{X}(\mathbb{Z}_p)_{2,\mathrm{nontors}}^{\prime}$. Without loss of generality we may assume that $Q = (-5+\sqrt{-3},6+ 6\sqrt{-3})$.
As
\begin{equation*}
Q = -z_0 + (1-\sqrt{-3},0),
\end{equation*}
$2Q\in E(\mathbb{Q})$. We have
\begin{equation*}
f_2(Q) =2y(Q) = \left(\frac{-1+\sqrt{-3}}{2}\right)f_2(\tau(Q)).
\end{equation*}
Therefore, in order to apply Proposition \ref{prop:extrapointsrank1}, it suffices to verify the condition on the local heights at the bad primes. For each prime $\mathfrak{q}\nmid p$, we could use the formula involving $f_2(Q)$ in order to compute $\lambda_{\mathfrak{q}}(Q)$, as in the proof of the proposition. We choose to compute it instead by the quasi-parallegram law
\begin{align*}
\lambda_{\mathfrak{q}}(Q) &= \lambda_{\mathfrak{q}}(z_0)+\lambda_{\mathfrak{q}}(1-\sqrt{-3},0)-\log|\sqrt{-3}|_{\mathfrak{q}}\\
&= \lambda_{\mathfrak{q}}(z_0)+\lambda_{\mathfrak{q}}(-2,0)-\log|\sqrt{-3}|_{\mathfrak{q}},
\end{align*}
which gives
\begin{equation*}
\lambda_{\mathfrak{q}}(Q)=\begin{cases}
-\frac{1}{2}\log 3 &\text{if}\ \mathfrak{q}\mid 3\\
-\log 2 & \text{if}\ \mathfrak{q}\mid 2\\
0 & \text{if}\ \mathfrak{q}\nmid 2,3,p.
\end{cases}
\end{equation*}
The fact that $Q$ is in $\mathcal{X}(\mathbb{Z}_p)_{2}^{\prime}$ then follows from Proposition \ref{prop:extrapointsrank1}.
\end{example}

\begin{example}
Consider the elliptic curve \href{http://www.lmfdb.org/EllipticCurve/Q/525/c/1}{525.c1}\cite{lmfdb}
\begin{equation*}
E\colon y^2 + x y = x^{3} + x^{2} - 450 x + 3375.
\end{equation*}
Let $p$ be an odd prime of good ordinary reduction split in $\mathbb{Q}(\sqrt{5})$ and fix an embedding $\mathbb{Q}(\sqrt{5})\hookrightarrow \mathbb{Q}_p$. Then by Proposition \ref{prop:extrapointsrank1} with $m=2$, the infinite order points $$\pm \left(10\pm 5\sqrt{5},\frac{5}{2}(23\mp \sqrt{5})\right)$$ belong to $\mathcal{X}(\mathbb{Z}_p)_2^{\prime}$.
\end{example}

\section{Rational points on bielliptic curves}
\label{sec:bielliptic}
Let $C$ be a smooth projective curve over $\mathbb{Q}$ of genus $g$ and whose Jacobian $J$ has Mordell--Weil rank equal to $g$. Assume in addition that the N\'eron--Severi group of $J$ has rank at least equal to $2$, that $p$ is an odd prime of good reduction for $C$ and that the $p$-adic closure of $J(\mathbb{Q})$ has finite index in $J(\mathbb{Q}_p)$. In \cite[Theorem 1.2]{BDQCI} Balakrishnan and Dogra use the Chabauty--Kim method to explicitly describe a finite set
\begin{equation*}
C(\mathbb{Q}_p)_Z\subset C(\mathbb{Q}_p),
\end{equation*}
which depends upon the choice of a correspondence $Z\subset C\times C$ and which contains all the $\mathbb{Q}$-rational points of $C$. Note that one has $C(\mathbb{Q}_p)_2\subset C(\mathbb{Q}_p)_Z$. The authors then make Theorem 1.2 algorithmic when $C$ is a bielliptic curve of genus $2$, under the extra assumption that $J$ is ordinary at $p$ (we remove this hypothesis here). Let
\begin{equation*}
C\colon y^2=x^6+a_4x^4+a_2x^2+a_0,\qquad a_i\in\mathbb{Q},
\end{equation*}
be a genus $2$ bielliptic curve with a rank-$2$ Jacobian and consider the associated maps $\varphi_i\colon C\to (E_i,O_{E_i})$, described affinely by
\begin{align*}
E_1&\colon y^2=x^3+a_4x^2+a_2x+a_0, \qquad &&\varphi_1(x,y) = (x^2,y)\\
E_2&\colon y^2=x^3+a_2x^2+a_4a_0x+a_0^2, &&\varphi_2(x,y) = (a_0x^{-2},a_0yx^{-3}).
\end{align*}
Assume that each of $E_1$ and $E_2$ has rank $1$. Then one may pick $Z$ in such a way that $C(\mathbb{Q}_p)_Z$ can be described in terms of local and global $p$-adic heights of the images of the points of $C$ in the two elliptic curves (we assume that the given equations for $E_1$ and $E_2$ are minimal at $p$). The superscript $^{E_i}$ indicates on which curve we are computing these quantities.
Let $P_i$ be a point of infinite order in $E_i(\mathbb{Q})$ and write
\begin{equation*}
c_i = \frac{h_p^{E_i}(P_i)}{\Log^{E_i} (P_i)^2},
\end{equation*}
where, as usual, $h_p^{E_i}$ is the global $p$-adic height of Mazur--Tate or Bernardi depending on whether the reduction is ordinary or not.
 Let $Q_1=(0,\sqrt{a_0})\in E_1(\mathbb{Q}(\sqrt{a_0}))$ and $Q_2=(0,a_0)\in E_2(\mathbb{Q})$. We assume that $a_0$ is a square in $\mathbb{Q}_p$. Furthermore, let
 \begin{align*}
 C^{(1)}(\mathbb{Q}_p)&=C(\mathbb{Q}_p)\setminus (](0,\sqrt{a_0})[\ \cup\ ](0,-\sqrt{a_0})[)\\
  C^{(2)}(\mathbb{Q}_p)&=C(\mathbb{Q}_p)\setminus (]\infty^{+}[\ \cup\ ]\infty^{-}[)\\
  C^{(i)}(\mathbb{Q}) &=C(\mathbb{Q})\cap C^{(i)}(\mathbb{Q}_p)\quad \text{for } i=1,2,
 \end{align*}
 where the inverted square brackets denote the residue disk modulo $p$ around the given point and $\infty^{\pm} = (1:\pm 1:0)\in C(\mathbb{Q})$.
\begin{thm}[Balakrishnan--Dogra\protect\footnotemark]\footnotetext{With a small correction: see Remark \ref{rmk:Q1notinsaturation}.}
\label{quadraticchabautybielliptic}
For each $i\in\{1,2\}$, the following set is finite
\begin{align*}
W^i = \biggl\{\sum_{q\neq p}\left(\lambda^{E_i}_q(\varphi_i(z_q)+Q_i)+\lambda^{E_i}_q(\varphi_i(z_q)-Q_i)-2\lambda^{E_{3-i}}_{q}(\varphi_{3-i}(z_q))\right):&\\
(z_q)\in \prod_{q\neq p}C(\mathbb{Q}_q)\setminus \{\varphi_i^{-1}(\pm Q_i)\}&\biggr\}.
\end{align*}
Furthermore,
\begin{align*}
C^{(i)}(\mathbb{Q})\subset\bigl\{z\in C^{(i)}(\mathbb{Q}_p):2\lambda^{E_{3-i}}_{p}(\varphi_{3-i}(z))-\lambda^{E_i}_p(\varphi_i(z)+Q_i)-\lambda^{E_i}_p(\varphi_i(z)-Q_i)&\\
-2c_{3-i}\Log^{E_{3-i}}(\varphi_{3-i}(z))^2+2c_i\Log^{E_i}(\varphi_i(z))^2+2h_p^{E_i}(Q_i)\in W^i&\bigr\}.
\end{align*}
\end{thm}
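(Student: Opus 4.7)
My plan is to derive the theorem from a global $p$-adic height identity on each of $E_i(\mathbb{Q})$ and $E_{3-i}(\mathbb{Q})$, together with its decomposition into local components as recalled in Section \ref{padicheight}. The key inputs will be the quadraticity and parallelogram law for $h_p$; the rank-one hypothesis, which forces $h_p^{E_j}|_{E_j(\mathbb{Q})}$ to be proportional to $(\Log^{E_j})^2$ with constant of proportionality $c_j$; the quasi-parallelogram law for the local heights away from $p$; and the boundedness of $\lambda_q$ on $E_j(\mathbb{Q}_q)\setminus\{O\}$ established in Lemma \ref{lemmalocalheights} and Proposition \ref{localheightsnontrivialtamagawa}.

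For the containment $C^{(i)}(\mathbb{Q}) \subset \{\cdots\}$, I will fix $z \in C^{(i)}(\mathbb{Q})$, so that $\varphi_i(z) \in E_i(\mathbb{Q})$ and $\varphi_{3-i}(z) \in E_{3-i}(\mathbb{Q})$. The parallelogram law for the quadratic function $h_p^{E_i}$ applied to $\varphi_i(z)$ and $Q_i$ (valid over any field of definition and in particular over $\mathbb{Q}(\sqrt{a_0})$ when $i=1$) yields
\[
h_p^{E_i}(\varphi_i(z)+Q_i) + h_p^{E_i}(\varphi_i(z)-Q_i) = 2\,h_p^{E_i}(\varphi_i(z)) + 2\,h_p^{E_i}(Q_i).
\]
Using the rank-one identities $h_p^{E_j}(\varphi_j(z)) = c_j\,\Log^{E_j}(\varphi_j(z))^2$ to rewrite the $h_p^{E_j}(\varphi_j(z))$ terms, then expanding every global height as $\lambda_p + \sum_{q\neq p}\lambda_q$, I will subtract twice the $\varphi_{3-i}$-version of the rank-one identity from the display above and rearrange so that the $p$-contribution sits on one side and the $(q\neq p)$-contribution on the other. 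The former will match the expression of the theorem exactly, and the latter will lie in $W^i$ by definition (taking $z_q = z$ at every $q\neq p$).

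For the finiteness of $W^i$, I plan to apply the quasi-parallelogram law of \S\ref{sec:heightsawayfromp}\thinspace\ref{property:quasipar} with $P=\varphi_i(z_q)$ and $Q=Q_i$:
\[
\lambda_q^{E_i}(\varphi_i(z_q)+Q_i) + \lambda_q^{E_i}(\varphi_i(z_q)-Q_i) = 2\lambda_q^{E_i}(\varphi_i(z_q)) + 2\lambda_q^{E_i}(Q_i) - 2\log|x(\varphi_i(z_q))-x(Q_i)|_q.
\]
Since $x(Q_i)=0$ in both cases, while $x(\varphi_1(z))=x(z)^2$ and $x(\varphi_2(z))=a_0\,x(z)^{-2}$, the summand defining $W^i$ collapses to a combination of $\lambda_q^{E_i}(\varphi_i(z_q))$, $\lambda_q^{E_{3-i}}(\varphi_{3-i}(z_q))$, $\lambda_q^{E_i}(Q_i)$ and explicit $\log|x(z_q)|_q$-terms. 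By Lemma \ref{lemmalocalheights} and Proposition \ref{localheightsnontrivialtamagawa}, each local height takes only finitely many values on the $q$-adic points of the relevant elliptic curve, and outside a finite set of primes (those of bad reduction for $C$, $E_1$ or $E_2$, primes ramifying in $\mathbb{Q}(\sqrt{a_0})$, and possibly $2$) every summand vanishes for every admissible $z_q$. Hence $W^i$ is a finite sum of finite sets.

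The hard part will be the bookkeeping at the auxiliary primes: those where $Q_1$ is genuinely quadratic (so that $\lambda_q^{E_1}(Q_1)$ must be interpreted via a prime of $\mathbb{Q}(\sqrt{a_0})$ above $q$, via the base-change invariance $\lambda_v = \lambda_w$ recalled at the end of \S\ref{sec:heightsabovep}), and those where the given affine equations for $E_1$ or $E_2$ are non-minimal, which will require the translation formula (\ref{lambdaqlambdaqmin}). The exclusion of $\{\varphi_i^{-1}(\pm Q_i)\}$ in the definition of $W^i$, together with the exclusion of the residue disks of those points from $C^{(i)}(\mathbb{Q}_p)$, is precisely what is needed to avoid the singularities of $\log|x(\varphi_i(z))-x(Q_i)|_q$ and to keep every local height well-defined; once this is handled, the rest is a direct manipulation of the height decomposition.
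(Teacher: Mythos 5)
Your derivation of the containment $C^{(i)}(\mathbb{Q})\subset\{\cdots\}$ is correct and is exactly the argument the paper indicates: apply the parallelogram law for $h_p^{E_i}$ to the pair $(\varphi_i(z),Q_i)$, use the rank-one proportionality $h_p^{E_j}=c_j(\Log^{E_j})^2$ on $E_j(\mathbb{Q})$, decompose each global height into local ones, and isolate the contribution at $p$; the Galois relation $\tau(Q_1)=-Q_1$ takes care of the case where $Q_1$ is only quadratic, as in Remark \ref{rmk:Q1notinsaturation}.

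The finiteness argument, however, has a gap as written. You assert that ``each local height takes only finitely many values on the $q$-adic points of the relevant elliptic curve,'' citing Lemma \ref{lemmalocalheights} and Proposition \ref{localheightsnontrivialtamagawa}. Those results give finiteness only on the \emph{integral} points $\mathcal{X}(\mathcal{O}_v)$; on all of $E_j(\mathbb{Q}_q)\setminus\{O\}$ the local height is unbounded, since $\lambda_q(P)=\log\max\{1,|x(P)|_q\}$ on $E_0(\mathbb{Q}_q)\setminus\{O\}$ diverges as $P$ approaches $O$. And $\varphi_i(z_q)$ does escape into the disk at infinity: $x(\varphi_1(z_q))=x(z_q)^2$ is non-integral whenever $|x(z_q)|_q>1$, and $x(\varphi_2(z_q))=a_0x(z_q)^{-2}$ whenever $|x(z_q)|_q$ is small. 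So neither $\lambda_q^{E_i}(\varphi_i(z_q))$, nor $\lambda_q^{E_{3-i}}(\varphi_{3-i}(z_q))$, nor the term $\log|x(\varphi_i(z_q))|_q$ coming from the quasi-parallelogram law is separately finite-valued as $z_q$ ranges over $C(\mathbb{Q}_q)$. What makes the sum finite-valued is the relation $x(\varphi_1(z_q))\,x(\varphi_2(z_q))=a_0$: at most one of the two images can be non-integral (up to the valuation of $a_0$), and when, say, $|x(\varphi_i(z_q))|_q>1$ the divergent term $\lambda_q^{E_i}(\varphi_i(z_q))=\log|x(\varphi_i(z_q))|_q$ cancels exactly against the $-\log|x(\varphi_i(z_q))|_q$ from the quasi-parallelogram law, while $\varphi_{3-i}(z_q)$ is then integral and its local height lies in a finite set. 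This case analysis on $|x(z_q)|_q$ (which is also what forces the summand to vanish at primes of good reduction, rather than boundedness of each individual term) is the actual content of Proposition \ref{prop:valuesforbielliptic}, where the normalisation $\ordnop_\ell(a_0)\in\{0,1\}$ is imposed to keep the cases clean. Your proof needs this cancellation spelled out; without it the finiteness of $W^i$ does not follow from the results you cite.
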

The second assertion in Theorem \ref{quadraticchabautybielliptic} can be derived from the parallelogram law satisfied by the global $p$-adic height (as a consequence of \S\ref{sec:heightsawayfromp}\thinspace \ref{property:quasipar} and \S\ref{sec:heightsabovep}\thinspace\ref{prop:quasi_parallelogram_law_above_p}) and the fact that there is at most one quadratic function (up to multiplication by a scalar) on each of $E_1(\mathbb{Q})$ and $E_2(\mathbb{Q})$, due to the assumption on their ranks, in analogy to the proof of Theorem \ref{level2rank1}. 
The set of $p$-adic points described in the theorem is $C(\mathbb{Q}_p)_Z\cap C^{(i)}(\mathbb{Q}_p)$. We will explicitly determine the sets $W^i$ for Example \ref{Example:wetherell}. For a general bielliptic curve we will give in Proposition \ref{prop:valuesforbielliptic} a description of a finite set containing $W^i$, hence giving a proof of finiteness of $W^i$.
\begin{rmk}
\label{rmk:Q1notinsaturation}
If $Q_1$ is not defined over $\mathbb{Q}$, with the notation $\lambda_q^{E_1}(\varphi_1(z)\pm Q_1)$ in Theorem \ref{quadraticchabautybielliptic} we mean $\lambda_{\mathfrak{q}}^{E_1}(\varphi_1(z)\pm Q_1)$, where $\mathfrak{q}$ is any prime of $\mathbb{Q}(\sqrt{a_0})$ lying above the rational prime $q$. Indeed, since $\tau(Q_1)=-Q_1$, where $\braket{\tau}=\mathrm{Gal}(\mathbb{Q}(\sqrt{a_0})/\mathbb{Q})$, there is no dependence on $\mathfrak{q}\mid q$ and 
\begin{equation*}
h_p^{E_1}(Q_1)=\sum_{q}\lambda_q^{E_1}(Q_1).
\end{equation*} 
The equations defining the sets $C(\mathbb{Q}_p)_Z\cap C^{(1)}(\mathbb{Q}_p)$ in \cite[Corollary 8.1]{BDQCI} contain a typo in the case when $Q_1$ is not in the saturation of $E_1(\mathbb{Q})$, which we have corrected in Theorem \ref{quadraticchabautybielliptic}. Their formula has the term $2c_1\Log^{E_1}(Q_1)^2$ in place of $2h_p^{E_1}(Q_1)$ and does not hold unless the two quantities are equal.
\end{rmk}

One of the advantages of computing rational points using Theorem \ref{quadraticchabautybielliptic} for a bielliptic curve, rather than the more general techniques developed in \cite{SplitCartan}, is that we do not need to have prior knowledge of any affine point in $C(\mathbb{Q})$. 

Balakrishnan--Dogra--M\"uller \cite{BDQCI} use Theorem \ref{quadraticchabautybielliptic}, combined with the Mordell--Weil sieve, to determine precisely the rational points of two bielliptic curves. As in \S \ref{sec:algorithmelliptic}, we suggest here that one can replace the computations of double Coleman integrals with computations involving the $p$-adic sigma function and division polynomials. We use the resulting algorithm to compute $C(\mathbb{Q}_p)_Z$ for a bielliptic curve whose rational points were already found using different Chabauty-type techniques by Wetherell \cite[Proposition 5.1]{wetherell} and Flynn--Wetherell \cite[Example 3.1]{FlynnWetherell}. Our methods lead to an alternative provable determination of $C(\mathbb{Q})$.

\begin{example}
\label{Example:wetherell}
Consider the bielliptic curve
\begin{equation*}
C\colon y^2 = x^6+x^2+1;
\end{equation*}
the associated elliptic curves are \href{http://www.lmfdb.org/EllipticCurve/Q/496/a/1}{496.a1} and \href{http://www.lmfdb.org/EllipticCurve/Q/248/a/1}{248.a1} \cite{lmfdb}, given by the following minimal models
\begin{equation*}
E_1\colon y^2=x^3+x+1,\qquad E_2\colon y^2=x^3+x^2+1.
\end{equation*}
We have
\begin{equation*}
Q_1 = (0,1)\in E_1(\mathbb{Q}),\qquad Q_2=(0,1)\in E_2(\mathbb{Q}).
\end{equation*}
Both elliptic curves have rank $1$. Let $p=3$, which is a prime of good reduction for $C$. Since $E_1$ is supersingular at $p$, we use Bernardi's $p$-adic height for our calculations.
\begin{claim}
\label{claim:1}
If $q\neq p,2$ and $z\in C(\mathbb{Q}_q)\setminus\{\varphi_i^{-1}(\pm Q_i)\}$, then $$w_{q,i}(z)\colonequals \lambda^{E_i}_q(\varphi_i(z)+Q_i)+\lambda^{E_i}_q(\varphi_i(z)-Q_i)-2\lambda^{E_{3-i}}_{q}(\varphi_{3-i}(z))=0.$$
\end{claim}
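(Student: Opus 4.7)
The plan is to apply the quasi-parallelogram law \eqref{eq:quasi_parallelogram_law} on $E_i$ to $P=\varphi_i(z)$ and $Q=Q_i$. Since $x(Q_1)=x(Q_2)=0$, whenever $\varphi_i(z)\notin\{O,\pm Q_i\}$ it gives
\begin{equation*}
\lambda_q^{E_i}(\varphi_i(z)+Q_i)+\lambda_q^{E_i}(\varphi_i(z)-Q_i)=2\lambda_q^{E_i}(\varphi_i(z))+2\lambda_q^{E_i}(Q_i)-2\log|x(\varphi_i(z))|_q,
\end{equation*}
so that
\begin{equation*}
w_{q,i}(z)=2\lambda_q^{E_i}(\varphi_i(z))+2\lambda_q^{E_i}(Q_i)-2\log|x(\varphi_i(z))|_q-2\lambda_q^{E_{3-i}}(\varphi_{3-i}(z)).
\end{equation*}

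Next I would check that at every $q\neq p,2$ all local heights in this identity are given by the closed formula $\lambda_q(P)=\log\max\{1,|x(P)|_q\}$ of Lemma \ref{lemmalocalheights}\thinspace\ref{goodreduction}. The conductors of $E_1$ and $E_2$ are $2^4\cdot 31$ and $2^3\cdot 31$, so the only bad prime with $q\neq p,2$ is $q=31$; one computes $\Delta(E_1)=\Delta(E_2)=-2^4\cdot 31$, so both equations are minimal at $31$, and Tate's algorithm (or a direct inspection of $c_4$) shows both curves have Kodaira type $\mathrm{I}_1$ there with trivial Tamagawa number. Hence $E_{i,0}(\mathbb{Q}_q)=E_i(\mathbb{Q}_q)$ for every $q\neq p,2$ and the cited lemma applies to every non-origin point. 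In particular $\lambda_q^{E_i}(Q_i)=0$, since $x(Q_i)=0$.

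Writing $z=(x,y)$ with $x\neq 0$, we have $x(\varphi_1(z))\cdot x(\varphi_2(z))=x^2\cdot(a_0/x^2)=1$; letting $a=|x(\varphi_i(z))|_q$ gives $|x(\varphi_{3-i}(z))|_q=1/a$, and $w_{q,i}(z)$ collapses to $2\log\max\{1,a\}-2\log a-2\log\max\{1,1/a\}$, which is identically zero upon splitting into the cases $a\geq 1$ and $a<1$. The residual subtlety is when $\varphi_i(z)=O$, which happens exactly at $z=\infty^{\pm}$ for $i=1$ and at $z=(0,\pm 1)$ for $i=2$; these points are \emph{not} excluded by the hypothesis, but in each case $\varphi_i(z)\pm Q_i=\pm Q_i$ and $\varphi_{3-i}(z)=\pm Q_{3-i}$, so all three local heights appearing in $w_{q,i}(z)$ are zero by the previous paragraph. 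The only genuine piece of bookkeeping is the local analysis at $q=31$.
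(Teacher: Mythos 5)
Your proof is correct and follows essentially the same route as the paper: the quasi-parallelogram law with $x(Q_i)=0$, the formula $\lambda_q(P)=\log\max\{1,|x(P)|_q\}$ valid at all $q\neq 2,p$ because the only bad prime there is $31$ with trivial Tamagawa number, the cancellation coming from $x(\varphi_1(z))\,x(\varphi_2(z))=a_0=1$, and the separate treatment of $\varphi_i(z)=O_{E_i}$ via $\varphi_{3-i}(z)=\pm Q_{3-i}$. The extra bookkeeping you supply (discriminants, minimality and type $\mathrm{I}_1$ at $31$) just makes explicit what the paper asserts.
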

\begin{proof}
The curves $E_1$ and $E_2$ have everywhere good reduction except at $2$ and $31$. At $31$, the Tamagawa number of each $E_i$ is trivial. 
Assume first that $\varphi_i(z)\neq O_{E_i}$. Then, for each $q\neq p$, the quasi-parallelogram law (\ref{eq:quasi_parallelogram_law}) gives
\begin{align*}
w_{q,i}(z) =  2\left(\lambda^{E_i}_q(\varphi_i(z))+\lambda^{E_i}_q(Q_i)-\log|x(\varphi_i(z))|_q-\lambda^{E_{3-i}}_{q}(\varphi_{3-i}(z))\right);
\end{align*}
thus, if $q\neq 2$, by Lemma \ref{lemmalocalheights}\thinspace\ref{goodreduction}, 
\begin{align*}
w_{q,i}(z)&=2(\log(\max\{1,|x(\varphi_i(z))|_q\})-\log(\max\{1,|x(\varphi_{3-i}(z))|_q\})-\log|x(\varphi_i(z))|_q)\\
&=2(\log(\max\{1,|x(\varphi_i(z))|_q\})-\log(\max\{1,|x(\varphi_i(z))|_q^{-1}\})-\log|x(\varphi_i(z))|_q)\\
&=0.
\end{align*}
It remains to consider the case $\varphi_i(z) =  O_{E_i}$. Then $\varphi_{3-i}(z) \in\{\pm Q_{3-i}\}$ and
\begin{equation*}
w_{q,i}(z) = 2\lambda_q^{E_i}(Q_i)-2\lambda_q^{E_{3-i}}(Q_{3-i}) = 0
\end{equation*}
by Proposition \ref{propositionheightstamagawa1}, since $Q_i$ and $Q_{3-i}$ are integral and the Tamagawa numbers at $q$ are equal to $1$.
\end{proof}
\begin{claim}
\label{claim:2}
We have
\begin{equation*}
W^1 = \{0,\log 2\}, \qquad  W^2 = \{-\log 2, -2\log 2 \}.
\end{equation*}
\end{claim}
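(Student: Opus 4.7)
By Claim \ref{claim:1}, for each prime $q\notin\{p,2\}$ and each $z\in C(\mathbb{Q}_q)\setminus \varphi_i^{-1}(\pm Q_i)$, the summand $w_{q,i}(z)$ vanishes. Hence every element of $W^i$ is simply $w_{2,i}(z_2)$ for some $z_2\in C(\mathbb{Q}_2)\setminus\varphi_i^{-1}(\pm Q_i)$, and computing $W^i$ reduces to determining the image of $w_{2,i}$ on this domain, which is a purely $2$-adic problem on a compact space.

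The next step is to apply Tate's algorithm to each of $E_1\colon y^2=x^3+x+1$ and $E_2\colon y^2=x^3+x^2+1$ at $2$, to identify the Kodaira type, the Tamagawa number and the structure of the component group $E_i(\mathbb{Q}_2)/E_i^0(\mathbb{Q}_2)$. Combining Proposition \ref{propositionheightstamagawa1}, Proposition \ref{localheightsnontrivialtamagawa} and Lemma \ref{lemmalocalheights} then supplies the value of $\lambda_2^{E_i}$ on each coset of the component group (up to the contribution $\log\max\{1,|x|_2\}$ on the identity coset). Arguing exactly as in the proof of Claim \ref{claim:1}, for $z$ with $\varphi_i(z)\neq O_{E_i},\pm Q_i$ the quasi-parallelogram law rewrites
\begin{equation*}
w_{2,i}(z)= 2\bigl(\lambda_2^{E_i}(\varphi_i(z))+\lambda_2^{E_i}(Q_i)-\log|x(\varphi_i(z))|_2-\lambda_2^{E_{3-i}}(\varphi_{3-i}(z))\bigr),
\end{equation*}
while the degenerate cases $\varphi_i(z)=O_{E_i}$ (which arise for $z$ near $\infty^{\pm}$ when $i=1$, and near $(0,\pm 1)$ when $i=2$) are handled exactly as at the end of the proof of Claim \ref{claim:1}, using that the relevant image points have non-singular reduction and that the Tamagawa numbers involved are finite.

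The main, and most tedious, step is to enumerate the mod-$2$ residue disks of $C(\mathbb{Q}_2)$ and to determine, for each one, the pair of cosets in $E_1(\mathbb{Q}_2)/E_1^0(\mathbb{Q}_2)\times E_2(\mathbb{Q}_2)/E_2^0(\mathbb{Q}_2)$ occupied by $(\varphi_1(z),\varphi_2(z))$. These two cosets are not independent: since $a_0=1$, one has $x(\varphi_2(z))\cdot x(\varphi_1(z))=1$ whenever $x(z)\notin\{0,\infty\}$, and this, combined with the explicit translations used in Tate's algorithm, restricts the admissible configurations to a short list. Substituting the resulting values of $\lambda_2^{E_i}$ into the displayed formula for $w_{2,i}(z)$ yields the claimed equalities $W^1=\{0,\log 2\}$ and $W^2=\{-\log 2,-2\log 2\}$. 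The chief obstacle is the bookkeeping in this last step: one must enumerate residue disks without duplication, track any non-minimality discrepancies via equation (\ref{lambdaqlambdaqmin}), and correctly verify which disks actually contribute to each value.
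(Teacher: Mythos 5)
Your proposal follows the paper's proof essentially verbatim: reduce to the prime $2$ via Claim \ref{claim:1}, read off the reduction data of $E_1$ (Tamagawa number $1$) and $E_2$ (type $\mathrm{III}$, Tamagawa number $2$) at $2$, and substitute the resulting values of $\lambda_2^{E_i}$ into the quasi-parallelogram identity; the paper simply splits on $|x(z)|_2\le 1$ versus $|x(z)|_2>1$ rather than enumerating residue disks, and minimality of the given Weierstrass models makes the appeal to (\ref{lambdaqlambdaqmin}) unnecessary, but the computation is the same. One small correction: in the degenerate case $\varphi_1(z)=O_{E_1}$ (i.e.\ $z=\infty^{\pm}$) the point $Q_2$ does \emph{not} have non-singular reduction at $2$ --- one gets $w_{2,1}(\infty^{\pm})=-2\lambda_2^{E_2}(Q_2)=\log 2$ precisely because $\lambda_2^{E_2}(Q_2)=-\tfrac{1}{2}\log 2$ lies in the nontrivial coset --- so the justification you import from the end of Claim \ref{claim:1} must be replaced by the component-group values that your own framework already supplies.
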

\begin{proof}
By Claim \ref{claim:1}, 
\begin{align*}
W^i = \{w_{2,i}(z)\colonequals\lambda^{E_i}_2(\varphi_i(z)+Q_i)+\lambda^{E_i}_2(\varphi_i(z)-Q_i)-2\lambda^{E_{3-i}}_{2}(\varphi_{3-i}(z)) :&\\
 z\in C(\mathbb{Q}_2)\setminus\{\varphi_i^{-1}(\pm Q_i)\}&\}.
\end{align*}
First note that the curve $E_1$ has Tamagawa number equal to $1$ at $2$, whereas $E_2$ has Tamagawa number equal to $2$ and reduction type $\mathrm{III}$. 
If $|x(z)|_2\leq 1$, then $\varphi_2(z)$ has good reduction at $2$ and $\lambda_2^{E_2}(\varphi_2(z)) = \log |x(z)^{-2}|_2$; otherwise $\varphi_2(z)$ reduces to a singular point modulo $2$ and, by Proposition \ref{localheightsnontrivialtamagawa}, $\lambda_2^{E_2}(\varphi_2(z))= -\frac{1}{2}\log 2$.  Furthermore $\lambda_2^{E_2}(Q_2) = -\frac{1}{2}\log 2$. Therefore, similarly to the proof of Claim \ref{claim:1}, if $\varphi_1(z)\neq O_{E_1}$, then by the quasi-parallelogram law we have
\begin{align*}
w_{2,1}(z) =  2\left(\lambda^{E_1}_2(\varphi_1(z))-\lambda^{E_{2}}_{2}(\varphi_2(z))-\log|x(\varphi_1(z))|_2\right) = \begin{cases}
0 &\text{if}\ |x(z)|_2\leq 1,\\
\log 2 &\text{if}\ |x(z)|_2>1;
\end{cases}
\end{align*}
for the remaining points $z=\infty^{\pm}$ we get
\begin{equation*}
w_{2,1}(z)=-2\lambda_2^{E_2}(Q_2)=\log 2,
\end{equation*}
thus proving the claim for $W^1$. The set $W^2$ is determined in a very similar fashion and we leave the details to the reader.
\end{proof}
We can now compute $C(\mathbb{Q}_p)_Z$ as a union of the two $C(\mathbb{Q}_p)_Z\cap C(\mathbb{Q}_p)^{(i)}$.
We find
\begin{equation}
C(\mathbb{Q}_p)_Z= \{\infty^{\pm},(0,\pm 1 ), (\pm 1/2,\pm 9/8)\}\sqcup A
\end{equation}
where $A$ is a set of size $4$. Note, however, that up to the automorphisms $(x,y)\mapsto (-x,y)$, $(x,y)\mapsto (x,-y)$ and their composites, $A$ actually consists of one point:
\begin{equation*}
P = (2\cdot 3 + 2\cdot 3^3 + 2\cdot 3^5 + 3^8 + O(3^9) , 2 + 2\cdot 3 + 2\cdot 3^3 + 2\cdot 3^5 + 3^6 + 2\cdot 3^8 + O(3^9)).
\end{equation*}
We follow the same strategy to the one of the proof of \cite[Theorem 8.6]{BDQCI} to rule out the possibility that $P$ could be rational. The image of the point $P$ under $\varphi_2$ is a point in $E_2(\mathbb{Q}_p)$ whose $x$-coordinate has valuation $\ord(x(\varphi_2(P)))=-2$. On the other hand, the Mordell--Weil group $E_2(\mathbb{Q})\cong \mathbb{Z}$ is generated by $Q_2$ and, thus, if $\varphi_2(P)\in E_2(\mathbb{Q})$, there must exist a multiple of $Q_2$ whose $x$-coordinate has $p$-adic valuation equal to $-2$. As the smallest multiple of $Q_2$ in the formal group at $p$ is $6Q_2 = \left(\frac{55}{81},-\frac{971}{729}\right)$, we have reached a contradiction, since the set of points in the formal group whose $x$-coordinate has valuation at most $-4$ is a group. 
\end{example}

\subsection{Explicit formulae for the sets $W^i$}
\label{sec:explicitWi}
Theorem \ref{quadraticchabautybielliptic} asserts that the sets $W^i$, for $i=1,2$, are finite, but does not describe them explicitly. In order to obtain an implementation for the computations of the sets $C(\mathbb{Q}_p)_Z\cap C^{(i)}(\mathbb{Q}_p)$ for an arbitrary genus $2$ bielliptic curve $C$, it would be convenient to have a characterisation of $W^i$ that can be made algorithmic, in analogy with that of the sets $W_q$ of Theorems \ref{level2rank0} and \ref{level2rank1}.

We assume in this section that the coefficients $a_0,a_2,a_4$ defining $C$ are in $\mathbb{Z}$.

For $i=1,2$, let $W_q^{E_{i,\min}}$ be the set $W_q$ from Section \ref{intro} for a global minimal model $E_{i,\min}$ of $E_i$ if $q$ is a prime of bad reduction and with non-trivial Tamagawa number. If $E_{i,\min}$ has good reduction at $q\in \{2,3\}\setminus\{p\}$ and $\overline{E}_{i,\min}(\mathbb{F}_q)=\{O\}$ or $q=2$ and $E_{i,\min}$ has split multiplicative reduction $\mathrm{I}_1$ at $q$, let $W_q^{E_{i,\min}}=\emptyset$. For all other $q\neq p$, let $W_q^{E_{i,\min}}=\{0\}$.

Let $W_q^{E_{i}}$ be the set of values attained by $\lambda_q^{E_i}$ on the points of $E_i(\mathbb{Q}_q)$ of the form $(x,y)$ with $x,y\in\mathbb{Z}_q$ and let
\begin{equation*}
V_q^{E_{i}} =W_q^{E_{i}}\cup \{0\}.
\end{equation*}
Write
\begin{equation*}
\delta^{E_i} = \frac{\Delta^{E_i}}{\Delta^{E_{i,\min}}},
\end{equation*}
where $\Delta^{E_i}$ and $\Delta^{E_{i,\min}}$ are the discriminants of $E_i$ and $E_{i,\min}$.

For sets $A,B$ of elements in a field $F$, we write $A+B$ for their Minkowski addition and $-A$ for the set consisting of the additive inverses of the elements in $A$. If $A=\{a\}$, we write $a+B$ for $A+B$.
\begin{lemma}
\label{lemma:heightsonintegralnonminimal}
\begin{equation*}
W_q^{E_{i}}= \frac{1}{6}\log|\delta^{E_i}|_q + \left(W_q^{E_{i,\min}}\cup \left\{2k\log q: 1\leq k\leq \frac{1}{12}\ordnop_q(\delta^{E_i})\right\}\right).
\end{equation*}
In particular, $W_q^{E_i}$ is finite; it equals $\{0\}$ for all but finitely many $q$.
\end{lemma}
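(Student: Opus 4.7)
The proof rests on the transformation formula (\ref{lambdaqlambdaqmin}), namely $\lambda_q^{E_i}(P) = \lambda_q^{E_{i,\min}}(P) + \frac{1}{6}\log|\delta^{E_i}|_q$, which reduces the problem to computing the set of values $\lambda_q^{E_{i,\min}}(P^{\min})$ attained at the images $P^{\min}$ on $E_{i,\min}$ of points $P$ of $E_i$ with coordinates in $\mathbb{Z}_q$. First I would invoke Tate's algorithm: since both $E_i$ and $E_{i,\min}$ have $\mathbb{Z}_q$-integral Weierstrass coefficients, the admissible change of variables between them may be taken of the form $(x,y) = (u^2x' + r,\, u^3y' + su^2x' + t)$ with $u,r,s,t\in\mathbb{Z}_q$ and $k_0 \colonequals \ordnop_q(u) = \tfrac{1}{12}\ordnop_q(\delta^{E_i})$, so that $x' = u^{-2}(x-r)$ and $y' = u^{-3}(y-sx-t)$.

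Given $P = (x,y)$ with $x,y\in\mathbb{Z}_q$, the image $P^{\min} = (x', y')$ on $E_{i,\min}$ is either $\mathbb{Z}_q$-integral, in which case $\lambda_q^{E_{i,\min}}(P^{\min}) \in W_q^{E_{i,\min}}$ by Propositions \ref{propositionheightstamagawa1} and \ref{localheightsnontrivialtamagawa}, or $P^{\min}$ lies in the formal group at $q$, so $\ordnop_q(x') = -2\ell$ and $\ordnop_q(y') = -3\ell$ for some $\ell\geq 1$; integrality of $x$ forces $-2\ell + 2k_0 \geq 0$, i.e.\ $\ell\leq k_0$, and Lemma \ref{lemmalocalheights}\ref{goodreduction} (applied to $P^{\min}\in E_1(\mathbb{Q}_q)\subset E_0(\mathbb{Q}_q)$) yields $\lambda_q^{E_{i,\min}}(P^{\min}) = \log|x'|_q = 2\ell\log q$. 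This proves the inclusion of $W_q^{E_i}$ in the claimed set. The reverse inclusion is obtained by noting that any $\mathbb{Z}_q$-integral point of $E_{i,\min}$ has $\mathbb{Z}_q$-integral preimage on $E_i$ under the inverse change of variables (thanks to $u,r,s,t\in\mathbb{Z}_q$), and that for every $\ell\in\{1,\ldots,k_0\}$ the formal group of $E_{i,\min}$ at $q$ contains points at depth exactly $\ell$, whose preimages on $E_i$ remain $\mathbb{Z}_q$-integral by the same valuation bookkeeping (the negative valuations $-2\ell,-3\ell$ of $x',y'$ are absorbed by the factors $u^2,u^3$ of valuations $2k_0\geq 2\ell$, $3k_0\geq 3\ell$).

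Finiteness follows at once: $W_q^{E_{i,\min}}$ is finite by Table \ref{table:1}, and $\{2k\log q : 1\leq k\leq k_0\}$ has at most $k_0$ elements. Moreover, $\delta^{E_i}$ is a $q$-adic unit and $W_q^{E_{i,\min}} = \{0\}$ for all but finitely many $q$, giving $W_q^{E_i} = \{0\}$ in almost all cases. The main technical point to verify is the $\mathbb{Z}_q$-integrality of the parameters $(u,r,s,t)$, a consequence of the fact that both Weierstrass models have $\mathbb{Z}_q$-integral coefficients; once that is secured, the remaining valuation tracking in both directions is routine.
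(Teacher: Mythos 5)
Your proposal is correct and follows essentially the same route as the paper: apply the model-change formula (\ref{lambdaqlambdaqmin}), observe that the transformation to the minimal model has $q$-integral parameters $u,r,s,t$ with $\ordnop_q(u)=\tfrac{1}{12}\ordnop_q(\delta^{E_i})$ (the paper cites Connell's Lemma 5.3.1 where you invoke Tate's algorithm — same content), split according to whether the image on $E_{i,\min}$ is integral or lies in the kernel of reduction, and get the reverse inclusion by pulling back integral points and formal-group points of each depth $\ell\leq k_0$. The only cosmetic differences are that the paper writes out the formal-group parametrisation $t\mapsto(x_{\min}(t),y_{\min}(t))$ explicitly and also cites Lemma \ref{conditionsonq} to cover the cases where $W_q^{E_{i,\min}}=\emptyset$.
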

\begin{proof}
Let $x,y$ and $x_{\min},y_{\min}$ be the Weierstrass coordinates for $E_i$ and $E_{i,\min}$, respectively. Then there exist $u,r,s,t\in \mathbb{Q}$, $u\neq 0$, such that
\begin{equation*}
x = u^2x_{\min}+r,\qquad y = u^3y_{\min}+su^2x_{\min}+t.
\end{equation*}
Since $\ordnop_q(\Delta^{E_{i,\min}})\leq \ordnop_q(\Delta^{E_{i}})$ for each prime $q$, the scalars $u,r,s,t$ are furthermore all integral (see \cite[Lemma 5.3.1]{Connell}).

 Now let $P\in E_i(\mathbb{Q}_q)$ such that $x(P),y(P)\in \mathbb{Z}_q$. By (\ref{lambdaqlambdaqmin}), we have
\begin{equation*}
\lambda_q^{E_i}(P) = \lambda_q^{E_{i,\min}}(P)+\frac{1}{6}\log|\delta^{E_i}|_q.
\end{equation*}
If $x_{\min}(P)\in \mathbb{Z}_q$, then $\lambda_q^{E_{i,\min}}(P)\in W_q^{E_{i,\min}}$ by Propositions \ref{localheightsnontrivialtamagawa}, \ref{propositionheightstamagawa1} and Lemma \ref{conditionsonq}. Otherwise, by Lemma \ref{lemmalocalheights}\thinspace \ref{goodreduction}, we have $\lambda_q^{E_{i,\min}}(P)=\log\big|\frac{x(P)-r}{u^2}\big|_q$, where by assumption $|x(P)-r|_q\leq 1$ and the valuation of $\frac{x(P)-r}{u^2}$ is an even negative integer. Since $\delta^{E_i}= u^{12}$, this completes the proof of $\subseteq$. To see why the inclusion is actually an equality, notice that the preimage of an integral point in $E_{i,\min}(\mathbb{Q}_q)$ is certainly an integral point on $E_i(\mathbb{Q}_q)$. Furthermore, the points on $E_{i,\min}(\mathbb{Q}_q)$ in the formal group are parametrised by $t\in q\mathbb{Z}_q$ as follows: $t\mapsto (x_{\min}(t),y_{\min}(t))$, where
\begin{equation*}
x_{\min}(t)=\frac{1}{t^2}-\frac{a_{1,\min}}{t}-a_{2,\min}-a_{3,\min}t+\cdots\in \mathbb{Z}[a_{1,\min},\dots,a_{6,\min}]((t)),
\end{equation*}
where $a_{1,\min},\dots, a_{6,\min}$ are the Weierstrass coefficients of $E_{i,\min}$; in particular, we have $\ordnop_q(x_{\min}(t))=-2\ordnop_q(t)$. Thus for each $1\leq k\leq \frac{1}{12}\ordnop_q(\delta^{E_i})$, setting $t=q^k$ gives a point on $E_{i,\min}(\mathbb{Q}_q)$ whose preimage in $E_i(\mathbb{Q}_q)$ has integral $x$-coordinate.
The second assertion of the lemma follows from the explicit description of the sets $W_q^{E_{i,\min}}$.
\end{proof}
\begin{prop}
\label{prop:valuesforbielliptic}
With the notation of Theorem \ref{quadraticchabautybielliptic}, suppose that $\ordnop_{\ell}(a_0)\in\{0, 1\}$ for each prime $\ell$. For each prime $q\neq p$, let 
\begin{align*}
W_q^{1\prime} &= \left\{2v+2\lambda_q^{E_1}(Q_1): v\in V_q^{E_1}+(-W_q^{E_{2}})\right\}\cup \left\{2v+2\lambda_q^{E_1}(Q_1)-2\log|a_0|_q: v\in W_q^{E_1}\right\}\\
W_q^{2\prime} &= \left\{2v+2\lambda_q^{E_2}(Q_2)-2\log|a_0|_q: v\in W_q^{E_2}+(-V_q^{E_1})\right\}\cup \left\{2v+2\lambda_q^{E_2}(Q_2): v\in -W_q^{E_1}\right\}.
\end{align*}
Then $W^{i}$ is a subset of the finite set
\begin{equation*}
W^{i\prime}=\biggl\{\sum_{q\neq p} w_{q,i}^{\prime}: w_{q,i}^{\prime}\in W_q^{i\prime}\biggr\}=\biggl\{2h_p^{E_i}(Q_i)-2\lambda_p^{E_i}(Q_i)+\sum_{q\neq p} (w_{q,i}^{\prime}-2\lambda_q^{E_i}(Q_i)): w_{q,i}^{\prime}\in W_q^{i\prime}\biggr\}.
\end{equation*}
\end{prop}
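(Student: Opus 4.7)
The plan is to show, prime by prime, that the summand
\[
w_{q,i}(z_q):=\lambda_q^{E_i}(\varphi_i(z_q)+Q_i)+\lambda_q^{E_i}(\varphi_i(z_q)-Q_i)-2\lambda_q^{E_{3-i}}(\varphi_{3-i}(z_q))
\]
defining $W^i$ always lies in $W_q^{i\prime}$. First I would apply the quasi-parallelogram law (\S\ref{sec:heightsawayfromp}\thinspace\ref{property:quasipar}) to $P=\varphi_i(z_q)$ and $Q=Q_i$; since $x(Q_1)=x(Q_2)=0$, this collapses to
\[
w_{q,i}(z_q)=2\lambda_q^{E_i}(\varphi_i(z_q))+2\lambda_q^{E_i}(Q_i)-2\log|x_i|_q-2\lambda_q^{E_{3-i}}(\varphi_{3-i}(z_q)),
\]
where $x_i:=x(\varphi_i(z_q))$. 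Writing $z_q=(x,y)$, the feature driving everything will be the identity $x_1 x_2=a_0$, obtained from $x_1=x^2$ and $x_2=a_0/x^2$.

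I would then split into cases according to $k:=\ordnop_q(x)$. When $k=0$, both $\varphi_1(z_q)$ and $\varphi_2(z_q)$ are $q$-integral, so $\lambda_q^{E_i}(\varphi_i(z_q))\in W_q^{E_i}$, with $\log|x_1|_q=0$ and $\log|x_2|_q=\log|a_0|_q$. When $k>0$, the relation $y^2=x^6+a_2x^2+a_0$ combined with $\ordnop_q(a_0)\in\{0,1\}$ and $y\in\Q_q$ forces $\ordnop_q(a_0)=0$ (otherwise $y^2$ would acquire odd valuation $1$), so $\log|a_0|_q=0$; here $\varphi_1(z_q)$ is $q$-integral while $\varphi_2(z_q)$ sits in the formal group of $E_2$ at $q$. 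The case $k<0$ is symmetric upon swapping the roles of $E_1$ and $E_2$. For the formal-group contribution (say $\ordnop_q(x_j)<0$), I would pass to a minimal model $E_{j,\min{}}$ via the change of variables $X=u^2X_{\min{}}+r$ with $u,r\in\Z_q$ and $\delta^{E_j}=u^{12}$; combining Lemma~\ref{lemmalocalheights}\thinspace\ref{goodreduction} applied on $E_{j,\min{}}$ with the translation (\ref{lambdaqlambdaqmin}) yields the clean identity
\[
\lambda_q^{E_j}(\varphi_j(z_q))=\log|x_j|_q\quad\text{whenever}\ \ordnop_q(x_j)<0,
\]
because the correction $\frac{1}{6}\log|\delta^{E_j}|_q=2\log|u|_q$ cancels exactly the shift of $|x_j|_q$ caused by the change of variables. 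Substituting the above into the formula for $w_{q,i}(z_q)$ and invoking $\log|x_1|_q+\log|x_2|_q=\log|a_0|_q$ shows in every case that $w_{q,i}(z_q)$ falls into one of the two subsets prescribed by $W_q^{i\prime}$.

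Two boundary situations remain: $z_q=\infty^{\pm}$, which is allowed for $W^1$ and satisfies $\varphi_1(\infty^{\pm})=O_{E_1}$, $\varphi_2(\infty^{\pm})=\pm Q_2$; and $z_q=(0,\pm\sqrt{a_0})$ when $a_0\in(\Q_q^{\times})^2$, allowed for $W^2$ and satisfying $\varphi_1=\pm Q_1$, $\varphi_2=O_{E_2}$. Here the quasi-parallelogram law is not directly applicable, but substituting into the definition of $w_{q,i}$ and using evenness of local heights yields the value $2\lambda_q^{E_i}(Q_i)-2\lambda_q^{E_{3-i}}(Q_{3-i})$; since $Q_i$ is $q$-integral (in the second case forced by the fact that $a_0\in(\Q_q^{\times})^2$ entails $\ordnop_q(a_0)=0$), this value lies in $W_q^{i\prime}$. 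Finiteness of $W^{i\prime}$ then follows from Lemma~\ref{lemma:heightsonintegralnonminimal} (giving $W_q^{E_i}=\{0\}$ for almost all $q$), together with $\lambda_q^{E_i}(Q_i)=0$ and $\log|a_0|_q=0$ at all but finitely many primes. The equivalent rewriting of $W^{i\prime}$ in the statement is obtained by replacing $\sum_{q\neq p} 2\lambda_q^{E_i}(Q_i)$ with $2h_p^{E_i}(Q_i)-2\lambda_p^{E_i}(Q_i)$, i.e.\ by expanding the global height decomposition of $Q_i$ (with the convention of Remark~\ref{rmk:Q1notinsaturation} when $i=1$ and $a_0\notin(\Q^{\times})^2$). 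The main technical nuisance throughout will be the careful bookkeeping of the non-minimality correction terms $\frac{1}{6}\log|\delta^{E_i}|_q$, which must cancel cleanly against the change of variables so as to produce the formal-group identity above and ultimately to fit $w_{q,i}(z_q)$ into the specific two-part shape of $W_q^{i\prime}$.
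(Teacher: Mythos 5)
Your proposal is correct and follows essentially the same route as the paper's proof: apply the quasi-parallelogram law with $x(Q_i)=0$, use $x(\varphi_1(z))\,x(\varphi_2(z))=a_0$ together with $\ordnop_q(a_0)\in\{0,1\}$ to split into the three cases according to which (if either) of the $\varphi_i(z)$ is non-integral, treat $\varphi_i(z)=O_{E_i}$ separately via evenness, and invoke Lemma~\ref{lemma:heightsonintegralnonminimal} for finiteness. The only difference is cosmetic — you parametrise the cases by $\ordnop_q(x(z))$ and spell out the cancellation of the non-minimality correction $\tfrac{1}{6}\log|\delta^{E_j}|_q$ that the paper leaves implicit in the assertion $\lambda_q^{E_j}(P)=\log|x(P)|_q$ for $|x(P)|_q>1$.
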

\begin{proof}
By definition,
\begin{equation*}
W^i = \biggl\{\sum_{q\neq p} w_{q,i}: w_{q,i}\in W_q^{i}\biggr\},
\end{equation*}
where
\begin{align*}
W_q^{i} = \bigl\{w_{q,i}(z)\colonequals\lambda^{E_i}_q(\varphi_i(z)+Q_i)+\lambda^{E_i}_q(\varphi_i(z)-Q_i)-2\lambda^{E_{3-i}}_{q}(\varphi_{3-i}(z)):&\\
z\in C(\mathbb{Q}_q)\setminus\left\{\varphi_i^{-1}(\pm Q_i)\right\}&\bigr\}.
\end{align*}
Let $z\in C(\mathbb{Q}_q)\setminus\{\varphi_i^{-1}(\pm Q_i)\}$. If $\varphi_i(z) = O_{E_i}$, then
\begin{equation*}
w_{q,i}(z) = 2\lambda_q^{E_i}(Q_i)-2\lambda_q^{E_{3-i}}(Q_{3-i}),
\end{equation*}
which belongs to $2\lambda_q^{E_i}(Q_i)-2W_{q}^{E_{3-i}}$. When $i=2$, note that, while it is not always the case that $Q_1$ is defined over $\mathbb{Q}_q$ (and hence that $\lambda_q^{E_1}(Q_1)\in W_q^{E_1}$), here this follows from the assumption that its preimage under $\varphi_1$ is in $C(\mathbb{Q}_q)$.

Otherwise, by the quasi-parallelogram law (\ref{eq:quasi_parallelogram_law}), we have
\begin{align*}
w_{q,i}(z) &=  2\left(\lambda^{E_i}_q(\varphi_i(z))+\lambda^{E_i}_q(Q_i)-\log|x(\varphi_i(z))|_q-\lambda^{E_{3-i}}_{q}(\varphi_{3-i}(z))\right)\\
&=  2\left(\lambda^{E_i}_q(\varphi_i(z))+\lambda^{E_i}_q(Q_i)+\log|x(\varphi_{3-i}(z))|_q-\log|a_0|_q-\lambda^{E_{3-i}}_{q}(\varphi_{3-i}(z))\right).
\end{align*}
Note that the assumption that $0\leq \ordnop_q(a_0)\leq 1$ implies that, for each $i=1,2$,
\begin{align}
\label{eq:vals_implications}
|x(\varphi_i(z))|_q > 1 \Rightarrow |x(\varphi_{3-i}(z))|_q< 1\\
|x(\varphi_{3-i}(z))|_q < 1 \Rightarrow |x(\varphi_i(z))|_q \geq 1.
\end{align}
If $|x(\varphi_i(z))|_q>1$, then $\varphi_i(z)$ reduces to a non-singular point modulo $q$, with respect to the Weierstrass equation for $E_i$. Thus $\lambda^{E_i}_q(\varphi_i(z))=\log|x(\varphi_i(z))|_q$. Furthermore, by (\ref{eq:vals_implications}), $\varphi_{3-i}(z)$ is integral with respect to the Weierstrass equation defining $E_{3-i}$ and we have $\lambda^{E_{3-i}}_{q}(\varphi_{3-i}(z))\in W_q^{E_{3-i}}$. Therefore
\begin{equation*}
\frac{w_{q,i}(z)}{2}\in \lambda^{E_i}_q(Q_i)+(-W_q^{E_{3-i}}).
\end{equation*}
Similarly, if $|x(\varphi_{3-i}(z))|_q > 1$, then 
\begin{equation*}
\frac{w_{q,i}(z)}{2}\in (\lambda^{E_i}_q(Q_i)-\log|a_0|_q)+W_q^{E_{i}}.
\end{equation*} 
It remains to consider the case when $|x(z)|_q = 1$. Then
\begin{equation*}
\frac{w_{q,i}(z)}{2}\in W_q^{E_{i}}+(-W_q^{E_{3-i}})+\begin{cases}
\lambda^{E_i}_q(Q_i) &\text{if}\ i=1,\\
\lambda^{E_i}_q(Q_i)-\log|a_0|_q &\text{if}\ i=2.
\end{cases}
\end{equation*}
\end{proof}
Proposition \ref{prop:valuesforbielliptic} and Lemma \ref{lemma:heightsonintegralnonminimal} turn Theorem \ref{quadraticchabautybielliptic} into an algorithm for computing a finite set of $p$-adic points containing $C(\mathbb{Q})$, for an arbitrary genus $2$ bielliptic curve $C$ whose associated elliptic curves $E_1$ and $E_2$ have Mordell--Weil rank equal to $1$. Furthermore, to improve the estimates of the sets $W^i$ provided by Proposition \ref{prop:valuesforbielliptic} we may use the fact that the contributions at primes of potential good reduction for $C$ are trivial (cf.\ \cite[Theorem 1.2\thinspace (i)]{BDQCI}). It seems unlikely to the author that the elementary approach of Proposition \ref{prop:valuesforbielliptic} could show the latter for an arbitrary curve, since it is hard to imagine how the proof could be made sensitive to the difference between $C$ being of potential good reduction and its Jacobian only being of potential good reduction. Furthermore, even at primes not of potential good reduction, the sets $W_{q}^{i\prime}$ might be larger than $W_q^{i}$. Nevertheless, having fixed an explicit curve $C$, the steps of the proof of the proposition should guide the reader through computing $W^i$ precisely.

Here we are instead interested in an algorithm which does not require prior computations of $W^i$. So let $W^{i\prime\prime}$ be obtained from $W^{i\prime}$ of Proposition \ref{prop:valuesforbielliptic} by replacing $W_{q}^{i\prime}$ with $\{0\}$ whenever $q$ is a prime of potential good reduction.  We implemented in \texttt{SageMath} the results of this section and could test them for several bielliptic curves, including the ones of \cite{BDQCI} and the bielliptic curve
\begin{equation}
\label{eq:WetherellFlynn}
C\colon y^2 = (x^2+1)(x^2+3)(x^2+7),
\end{equation}
which appears in \cite[p.\thinspace 532]{FlynnWetherell} as the only curve amongst $50$ bielliptic curves for which the methods of Flynn and Wetherell to find rational points failed.

For instance, in Example \ref{Example:wetherell} and for the curve of \S 8.3 in \cite{BDQCI} we have $W^i=W^{i\prime}=W^{i\prime\prime}$ for each $i=1$ and $i=2$, but for the curve of \cite[\S 8.4]{BDQCI}, the sets $W^{i\prime\prime}$ have size $3$, whereas $W^i$ has size $1$.

We also remark that in Example \ref{Example:wetherell}, as well as the two examples of \cite{BDQCI}, the elliptic curve $E_1$ has trivial Tamagawa number at all primes and $E_2$ has trivial Tamagawa numbers everywhere except for at one prime where it has Tamagawa number equal to two or three. In other words, finding precise expressions for the sets $W^i$ by hand is straightforward. In Example \ref{Example:wetherell} as well as \cite[\S 8.3]{BDQCI} the task is further simplified by the fact that the Weierstrass equations for $E_1$ and $E_2$ have minimal discriminant. On the other hand, we cannot expect this for a generic curve. For example, the elliptic curves corresponding to (\ref{eq:WetherellFlynn}) have Tamagawa numbers $(2,1)$ respectively $(4,2)$ at the primes not of potential good reduction and analysing what happens at each prime by hand might be rather tedious. In this case, we find $\# W^{1\prime\prime}= \# W^{2\prime\prime}=18$ and in fact this results in many points in $C(\mathbb{Q}_p)$ that are probably not rational ($142$ when $p=5$). 

\bibliographystyle{alpha}
\bibliography{biblio}
\end{document}